\numberwithin{equation}{section}
\newtheorem{theorem}{Theorem}[section]
\newtheorem{corollary}[theorem]{Corollary}
\newtheorem{lemma}[theorem]{Lemma}
\newtheorem{proposition}[theorem]{Proposition}
\newtheorem{definition}[theorem]{Definition}
\DeclareMathOperator{\Alg}{Alg}
\DeclareMathOperator{\alg}{alg}
\newcommand{\N}{\mathbb{N}}
\newcommand{\Z}{\mathbb{Z}}
\newcommand{\R}{\mathbb{R}}
\def\a{\alpha}
\def\b{\beta}
\def\e{\varepsilon}
\def\D{\Delta}
\def\d{\delta}
\def\G{\Gamma}
\def\l{\lambda}
\def\L{\Lambda}
\def\o{\omega}
\def\r{\rho}
\def\s{\sigma}
\def\v{\varphi}
\DeclareMathOperator{\spann}{span}
\newcommand{\mc}{\mathcal}
\newcommand{\mf}{\mathfrak}
\DeclareMathOperator{\sign}{sign}
\begin{document}

\title[Periodic solutions to nonlocal pseudo-differential equations.]{Periodic solutions to nonlocal pseudo-differential equations. A bifurcation theoretical perspective.}

\author{Juan Carlos Sampedro} \thanks{The author has been supported by the Research Grant PID2021--123343NB-I00 of the Spanish Ministry of Science and Innovation and by the Institute of Interdisciplinar Mathematics of Complutense University.}
\address{Institute of Interdisciplinary Mathematics \\
	Departamento de Matem\'atica Aplicada a la Ingenier\'ia Industrial\\ 
	Universidad Polit\'ecnica de Madrid\\ 
	28012-Madrid \\
	Spain}
\email{juancarlos.sampedro@upm.es}

\begin{abstract}
In this paper we use abstract bifurcation theory for Fredholm operators of index zero to deal with periodic even solutions of the one-dimensional equation $\mc{L}u=\l u+|u|^{p}$, where $\mc{L}$ is a nonlocal pseudodifferential operator defined as a Fourier multiplier and $\l$ is the bifurcation parameter. Our general setting includes the fractional Laplacian $\mc{L}\equiv(-\D)^{s}$ and sharpens the results obtained for this operator to date. As a direct application, we establish the existence of traveling waves for general nonlocal dispersive equations for some velocity ranges.
\end{abstract}

\keywords{Periodic problem, Pseudo-differential equations, Fourier multipliers, Nonlocal operators, Fractional Laplacian, Bifurcation theory}
\subjclass[2020]{35S15, 35J61, 35S30, 35S05}

\maketitle

\section{Introduction}

In this paper we study the existence of even solutions for the periodic nonlocal pseudo-differential equation
\begin{equation}
	\label{e}
	\mc{L}u= \l u +  |u|^{p}, \quad x\in \mathbb{T},
\end{equation}
where $\mc{L}$ is a pseudo-differential operator defined as a Fourier multiplier operator by
\begin{equation*}
\mc{L}u : = \sum_{n\in\Z} |n|^{2s} \mf{m}(n) \widehat{u}(n) e^{inx}, \quad x\in \mathbb{T},
\end{equation*}
where the multiplier $\mf{m}: \Z \to \R_{\geq 0}$ satisfies certain reasonable hypotheses and $\mathbb{T}\equiv \R / 2\pi\Z$ is the $1$-dimensional torus. Throughout this article, for technical reasons, we assume that $s\geq\frac{1}{2}$ and $p\geq 2$. Finally, we consider $\l\in\R$ to be the bifurcation parameter. For $p=2$, this type of equation naturally arises from the study of traveling waves of the general nonlocal dispersive model
\begin{equation}
	\label{E1}
\partial_{t}u+2u\partial_{x}u-\partial_{x}(\mc{T}D^{2s}u)=0, \;\; u=u(t,x), \quad t>0, \;\; x\in \mathbb{T},
\end{equation}
where $0<s\leq 2$, $\mc{T}$ is a pseudo-differential operator defined as a Fourier multiplier operator by
\begin{equation}
\mc{T}u : = \sum_{n\in\Z} \mf{m}(n) \widehat{u}(n) e^{inx}, \quad x\in \mathbb{T},
\end{equation}
and $D^{\alpha}u = (-\D)^{\frac{\a}{2}}u$ is the Riesz potential of negative order, whose Fourier coefficients are given by $\widehat{D^{\a}u}(n) = |n|^{\a}\widehat{u}(n)$ for each $n\in \Z$. As usual, the Fourier coefficients are taken in the spacial variable.
Specific choices of the multiplier $\mf{m}(n)$ and the parameter $s$ give rise to particular models of well-known dispersive equations. For instance, if $\mf{m}(n)=1$, $n\in\Z$ and $s=1$, we obtain the Korteweg--de Vries equation. If $\mf{m}(n)=1$, $n\in\Z$ and $s=\frac{1}{2}$, we obtain the Benjamin--Ono equation and if $s=\frac{1}{2}$ and
	$$\mf{m}(n)=\left\{\begin{array}{ll}\big|\coth(\delta n)-\frac{1}{\delta n}\big| & \hbox{if} \; n\neq 0, \\
		0 & \hbox{if} \; n=0,
	\end{array}
	\right.
	$$ 
	where $\d>0$, we obtain the Intermediate Long Wave equation with depth $\d$. Given a speed $c\in\R$, the periodic traveling waves $u(x,t)=\varphi(x-ct)$ of \eqref{E1} satisfy the following nonlocal pseudo-differential equation:
$$
\mc{L}\varphi= -c\varphi +\varphi^{2}+\mu, \quad y\in \mathbb{T},
$$
where $\mu$ is the integration constant. Therefore, periodic solutions of \eqref{e} with $p=2$, give rise to traveling waves of speed $c=-\l$ and $\mu=0$.
\par In recent years, the study of nonlocal equations has received a great deal of attention. The nonlocal operator that has been most studied is the so-called fractional Laplacian given by
$$
(-\D)^{s}u(x) := C(s)\int_{\R}\frac{u(x)-u(y)}{|x-y|^{1+2s}}\; dy, \;\; x\in\R, \quad C(s):=\frac{s4^{s}\Gamma(\tfrac{1}{2}+s)}{\sqrt{\pi}\Gamma(1-s)},
$$
where $s\in (0,1)$ and where the integral must be understood in the principal value sense. For periodic functions, the fractional Laplacian can be expressed in the Fourier side as
$$
(-\D)^{s}u(x) = \sum_{n\in\Z}|n|^{2s}\widehat{u}(n)e^{inx}, \quad x\in\mathbb{T}.
$$
Equations of the form $(-\D)^{s}u=f(u)$ have been the subject of numerous studies under different boundary conditions. See for instance \cite{AV, DV, SV, SV2, RO, RO2} and references therein. In this paper, we focus on periodic boundary conditions.

Since the well-known local extension introduced by Caffarelli and Silvestre \cite{CS1}, these type of problems have been explored by rewriting the nonlocal problem in terms of a local degenerate elliptic problem with a Neumann boundary condition in one dimension higher. For example, the works of Ambrosio et al. \cite{A1,A2,A3,A4} and DelaTorre, del Pino, González, and Wei \cite{DPGW} use this approach to establish the existence of solutions for fractional problems. Specifically, within the periodic one-dimensional setting, Ambrosio \cite{A3} proved that if $s<\tfrac{1}{2}$, and $f(x,u)$ is a continuous function, $2\pi$-periodic in $x$, and satisfies $tf(x,t)\geq 0$ for all $x,t\in\R$, along with the Ambrosetti–Rabinowitz conditions and polynomial growth at rate $p\in(1,\tfrac{1+2s}{1-2s})$, then the problem
$$
(-\Delta)^{s}u=f(x,u), \quad x\in\mathbb{T},
$$
admits a periodic nontrivial solution $u\in \mc{C}^{\alpha}(\mathbb{T})$.
It is essential to recognize that equation \eqref{e} cannot be analyzed by the extension method unless $\mc{L}\equiv (-\Delta)^{s}$, as this technique is specific to the fractional Laplacian. Furthermore, even though we set $\mc{L}\equiv (-\Delta)^{s}$ in \eqref{e}, the function $f(x,t)=\lambda u+ |u|^{p}$ fails to meet the sign condition $tf(x,t)\geq 0$ for all $x,t\in\R$. 
\par In recent years, the problem has been approached from various angles. For instance, in 2018, Barrios, García-Melián, and Quaas \cite{BGMQ} studied, among other issues, the periodic solutions of the equation
\begin{equation}
	\label{E0.1}
	(-\Delta)^{s}u = \lambda u + |u|^{p-1}u, \quad x \in \mathbb{T},
\end{equation}
using variational methods. They established that for $p>1$ if $s\geq \tfrac{1}{2}$, and for $1<p<\tfrac{1+2s}{1-2s}$ if $s<\tfrac{1}{2}$, there exists $\lambda_{\ast}\geq 0$ such that, if $\lambda < -\lambda_{\ast}$ (respectively, $\lambda>0$), equation \eqref{E0.1} admits at least one periodic non-constant positive (respectively, sign-changing) solution. The analysis conducted for equation \eqref{E0.1} is not applicable to our equation \eqref{e} as the existence of solutions provided by either the Mountain Pass theorem or the Linking theorem could be constant, and to address this issue, it must be limited to the special problem \eqref{E0.1} to take advantage of the homogeneity of the problem. Continuing with the variational perspective, it is also remarkable the work of Cabré, Csató and Mas \cite{CCM} where they prove various qualitative and structural results for the non-constant periodic constrained minimizers of semilinear elliptic equations for integro-differential operators in $\R$. Additionally, they surprisingly provide a form of strong maximum principle for periodic solutions to integro-differential equations. Finally, Bruell and Dhara \cite{BD} prove the existence of periodic solutions of a type of nonlocal equation with homogeneous symbol of order $-r$, where $r>1$ using analytic bifurcation theory.
\par In this paper, we prove the existence of non-constant periodic solutions to equation \eqref{e} by employing a bifurcation theoretical perspective. Our first main result is stated as follows:

\begin{theorem}
	\label{Th1.1}
	Suppose that $s\geq \tfrac{1}{2}$ and $2\leq p<4s+1$. Then, the pseudo-differential equation
	\begin{equation}
		\label{Eq111}
	\mc{L}u = \l u + |u|^p, \quad x\in\mathbb{T},
\end{equation}
	admits at least one non-constant even solution for every $\l\in (\mf{m}(1),2^{2s}\mf{m}(2))$. Moreover, if $p=2$, equation \eqref{Eq111} admits one non-constant even solution for every 
	$$\l\in (-2^{2s}\mf{m}(2),-\mf{m}(1))\cup(\mf{m}(1),2^{2s}\mf{m}(2)).$$
\end{theorem}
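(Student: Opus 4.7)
The plan is to recast \eqref{Eq111} as the operator equation
\[
\mathfrak{F}(\lambda, u) := \mc{L}u - \lambda u - |u|^{p} = 0
\]
between Banach spaces of \emph{even} $2\pi$-periodic functions, for example $\mathfrak{F}\colon \R \times H^{2s}_{\mathrm{even}}(\mathbb{T}) \to L^{2}_{\mathrm{even}}(\mathbb{T})$. Because $s \geq 1/2$ the Sobolev embedding $H^{2s}(\mathbb{T})\hookrightarrow L^{\infty}(\mathbb{T})$ applies with room to spare, so the superposition operator $u\mapsto |u|^{p}$ is of class $\mathcal{C}^{1}$ for $p\geq 2$; combined with boundedness of $\mc{L}$ this makes $\mathfrak{F}$ a $\mathcal{C}^{1}$ map whose linearization $\mc{L} - \lambda I$ is Fredholm of index zero, since the Fourier symbol $|n|^{2s}\mathfrak{m}(n)-\lambda$ vanishes for only finitely many $n\in\Z$.

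The trivial branch $\{(\lambda, 0)\}$ solves \eqref{Eq111}, and the spectrum of $\mc{L}$ on the even sector is $\{\lambda_{n} := n^{2s}\mathfrak{m}(n) : n\geq 0\}$ with simple eigenfunctions $\cos(nx)$. At $\lambda_{1}=\mathfrak{m}(1)$ the kernel of $\mc{L}-\lambda_{1}I$ is spanned by $\cos x$, and the Crandall--Rabinowitz transversality condition is immediate because $\partial_{\lambda}D_{u}\mathfrak{F}(\lambda_{1},0)[\cos x] = -\cos x$ is not in the range of $\mc{L}-\lambda_{1}I$ (which is the $L^{2}$-orthogonal complement of $\cos x$). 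Crandall--Rabinowitz then yields a local $\mathcal{C}^{1}$ curve of non-trivial even solutions emanating from $(\lambda_{1},0)$, and since $\lambda_{1}$ has odd algebraic multiplicity, a Rabinowitz-type global theorem in its Fredholm-index-zero form (as developed by L\'opez G\'omez) produces a connected component $\mathfrak{C}$ of non-trivial solutions that is either unbounded in $\R\times H^{2s}_{\mathrm{even}}(\mathbb{T})$ or returns to the trivial branch at some $\lambda_{n}$, $n\geq 2$.

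To show $\mathfrak{C}$ covers the entire interval $(\mathfrak{m}(1), 2^{2s}\mathfrak{m}(2))$, I would combine two ingredients. First, a nodal argument excludes collapse into $(\lambda_{n},0)$ for $n\geq 2$: solutions near $(\lambda_{1},0)$ inherit the single-maximum profile of $\cos x$, a property preserved along $\mathfrak{C}$ by connectedness, while bifurcation from $\lambda_{n}$ would force $n$ oscillations. Second, subcritical a priori bounds--valid precisely in the exponent range $p<4s+1$--show that $\mathfrak{C}$ cannot blow up in the $u$-direction while $\lambda$ stays in a compact sub-interval of $(\mathfrak{m}(1), 2^{2s}\mathfrak{m}(2))$. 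Together they force the $\lambda$-projection of $\mathfrak{C}$ to cover the full open interval. For the case $p=2$ with $\lambda\in(-2^{2s}\mathfrak{m}(2), -\mathfrak{m}(1))$, observe that $u_{\lambda}\equiv -\lambda$ is a non-trivial constant solution for every $\lambda<0$; linearizing around this curve replaces $\mc{L}-\lambda I$ by $\mc{L}+\lambda I$, whose first even eigenvalue crossing occurs exactly at $\lambda = -\mathfrak{m}(1)$. The same Crandall--Rabinowitz plus global Rabinowitz strategy applied at the bifurcation point $(-\mathfrak{m}(1),\mathfrak{m}(1))$ of this non-trivial constant branch produces a continuum of non-constant even solutions covering the negative interval.

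The hardest step will be the a priori bound preventing blow-up of $\|u\|_{H^{2s}}$ along $\mathfrak{C}$ before $\lambda$ reaches the right endpoint. Testing \eqref{Eq111} against $u$ exploits the coercivity of $\mc{L}$ on mean-zero even functions (which hinges on the hypothesis $\mathfrak{m}(n)\geq c>0$ for $n\neq 0$ implicit in the introduction), while controlling $\int_{\mathbb{T}}|u|^{p+1}$ requires Sobolev interpolation $H^{2s}\hookrightarrow L^{p+1}$. The condition $p<4s+1$ is precisely the subcritical exponent in one space dimension for the energy associated with $\mc{L}$, so the estimates close there and break down at criticality. Establishing this bound uniformly along $\mathfrak{C}$, together with a compactness argument identifying limits of blow-up sequences, is the technical heart of the proof.
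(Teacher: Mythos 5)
Your overall strategy — reformulate as $\mathfrak{F}(\lambda,u)=\mc{L}u-\lambda u-|u|^{p}=0$ on even Sobolev spaces, verify Fredholmness, apply Crandall--Rabinowitz at $\sigma_1=\mathfrak{m}(1)$, invoke a López-Gómez--type global alternative, and close with a priori bounds — matches the paper's route. However, there are a few genuine issues.

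The most serious one is the proposed nodal argument to \emph{exclude} return to $(\sigma_n,0)$ for $n\geq 2$. First, this step is unnecessary: if the component $\mathscr{C}_1$ re-enters the trivial branch at $(\sigma_m,0)$ with $m>1$, then since the $\lambda$-projection of the connected set $\mathscr{C}_1$ is an interval containing both $\sigma_1$ and $\sigma_m\geq \sigma_2$, one has $(\sigma_1,\sigma_2)\subset\mathcal{P}_{\lambda}(\mathscr{C}_1)$ automatically, and the conclusion holds. The only return that must be excluded is $m=0$, i.e.\ $\mathscr{C}_1=\mathscr{C}_0$, and the paper disposes of it via Propositions 5.1/5.3 and the disjointness lemma $\mathscr{C}_0\cap\mathscr{C}_k=\emptyset$, which rests on uniqueness near $(0,0)$ and the fact that $\mathscr{C}_0$ consists of constants for $\lambda>0$. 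Second, the nodal argument is unlikely to be available in this setting: for nonlocal pseudodifferential operators there is no Sturm-type oscillation theory ensuring that the ``single-maximum profile'' of $\cos x$ is preserved along a continuum, and the paper explicitly avoids any such claim. Relying on nodal persistence is the kind of local-PDE intuition that breaks down here.

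A second gap: a priori $H^{2s}$-bounds on compact $\lambda$-intervals together with unboundedness of $\mathscr{C}_1$ force the $\lambda$-projection to be unbounded, but by themselves do not force it to extend to the \emph{right}. One must rule out the component escaping to $\lambda\to-\infty$ (or through $\lambda=0$). The paper does this by showing the only solution at $\lambda=0$ is $u\equiv 0$ (Corollary 6.2), which yields $\mathscr{C}_1\subset\R_{>0}\times H^{2s}_{+}(\mathbb{T})$; you need some version of this observation. You also omit the properness of $\mathfrak{F}$ on closed bounded sets, a hypothesis required by the Fitzpatrick--Pejsachowicz--Rabier degree on which the global alternative for Fredholm maps rests; it is not automatic and the paper proves it explicitly (Proposition 2.6).

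For the $p=2$, $\lambda<0$ case, your suggestion to bifurcate from the secondary trivial branch $u_\lambda\equiv -\lambda$ (with linearization $\mc{L}+\lambda I$) is plausible and correctly identifies the first crossing at $\lambda=-\mathfrak{m}(1)$, but it requires rerunning the entire global analysis around that branch. The paper instead observes that $T(\lambda,u)=(-\lambda,u+\lambda)$ is an explicit affine homeomorphism mapping solutions with $\lambda<0$ to solutions with $\lambda>0$, which transports the positive-$\lambda$ result in one line. Either route works in principle, but the symmetry argument is cleaner and avoids duplicated estimates.
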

This result implies the existence of non-constant periodic traveling waves with speed
$$
c\in (-2^{2s}\mf{m}(2),-\mf{m}(1))\cup(\mf{m}(1),2^{2s}\mf{m}(2))
$$ 
for the nonlocal dispersive equation \eqref{E1} provided that $s\geq \tfrac{1}{2}$.
Moreover, if $\mf{m}(n) = 1$ for each $n\in\Z$, that is, if $\mc{L}\equiv (-\D)^{s}$, we can establish the existence of non-constant even solutions for a wider range of $\l$:

\begin{theorem}
	\label{TP01}
	Suppose that $s\geq \tfrac{1}{2}$ and $2\leq p<4s+1$. Then, the pseudo-differential equation
	\begin{equation}
		\label{EE01}
		(-\D)^{s}u = \l u + |u|^{p}, \quad x\in\mathbb{T},
	\end{equation}
	admits at least one non-constant even solution for each
	$$
	\l\in \bigcup_{k\in\N}\left(k^{2s}, (k+1)^{2s}\right).
	$$
\end{theorem}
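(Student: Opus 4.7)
The plan is to reduce Theorem \ref{TP01} to Theorem \ref{Th1.1} by exploiting the scaling homogeneity of the fractional Laplacian $(-\D)^{s}$, which is special to the case $\mf{m}\equiv 1$. Given $\l\in(k^{2s},(k+1)^{2s})$ for some $k\in\N$, I would set $\l':=\l/k^{2s}$. Since $(k+1)/k\leq 2$ for every $k\geq 1$, the rescaled parameter satisfies
$$
\l'\in\left(1,\left(\tfrac{k+1}{k}\right)^{2s}\right)\subseteq(1,2^{2s}).
$$
For $\mc{L}\equiv (-\D)^{s}$ one has $\mf{m}(1)=1$ and $2^{2s}\mf{m}(2)=2^{2s}$, so Theorem \ref{Th1.1} furnishes a non-constant even solution $w$ of
$$
(-\D)^{s}w=\l' w+|w|^{p},\quad x\in\mathbb{T}.
$$

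The next step is to lift $w$ to a solution of \eqref{EE01} via the ansatz
$$
u(x):=k^{\frac{2s}{p-1}}\,w(kx),
$$
which is $2\pi$-periodic (in fact $2\pi/k$-periodic), even, and non-constant. Using the Fourier-multiplier definition, the function $x\mapsto w(kx)$ has Fourier coefficient $\widehat{w}(n)$ at index $nk$ and vanishing coefficient otherwise, giving the dilation identity
$$
(-\D)^{s}[w(k\,\cdot)](x)=\sum_{n\in\Z}|nk|^{2s}\widehat{w}(n)e^{inkx}=k^{2s}\,[(-\D)^{s}w](kx).
$$
The exponent $\tfrac{2s}{p-1}$ is tuned precisely so that, after substituting this identity, the factor $k^{2s}$ produced by the Laplacian on the left absorbs the factor $k^{2sp/(p-1)-2s/(p-1)}=k^{2s}$ coming from raising $u$ to the $p$-th power on the right. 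Evaluating the resulting equation at $y=kx\in\mathbb{T}$ reduces it to the equation satisfied by $w$, confirming that $u$ solves \eqref{EE01}.

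The argument is essentially a scaling reduction, and there is no serious analytic obstacle: the real content has already been absorbed by Theorem \ref{Th1.1}. The only subtle point is recognizing that the elementary bound $(k+1)/k\leq 2$ is exactly what allows the single base interval $(1,2^{2s})$ to cover, after rescaling, every strip $(k^{2s},(k+1)^{2s})$, so that one application of Theorem \ref{Th1.1} per level $k$ suffices. Note that this scheme depends critically on the pure power-law symbol of $(-\D)^{s}$; it would fail for a general multiplier $\mf{m}$, since the Fourier symbol would no longer be homogeneous under the dilation $x\mapsto kx$.
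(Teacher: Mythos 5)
Your proof is correct, and it takes a genuinely different and more elementary route than the paper's. The paper establishes Theorem \ref{TP01} (in its body form, Theorem \ref{TP0}) by applying the global alternative Theorem \ref{TGB} to the connected component $\mathscr{C}_1$: it uses the scaling injections $T_k$ (Propositions \ref{Pr7.20}, \ref{Pr7.2}, \ref{Pr4.6}) to establish unboundedness of $\mathscr{C}_1$, then invokes the $H^{2s}$-a priori bounds derived from the Gidas--Spruck blowup and the Felmer--Quaas Liouville theorem (Theorem \ref{Th6.10}) to conclude $[1,+\infty)\subset\mc{P}_\lambda(\mathscr{C}_1)$, and finishes by using disjointness of $\mathscr{C}_0$ and $\mathscr{C}_1$ to rule out constant solutions. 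Your argument bypasses all of this machinery: you observe the exact scaling identity underlying Lemma \ref{L7.1}, pull $\lambda\in(k^{2s},(k+1)^{2s})$ back to $\lambda'=\lambda/k^{2s}\in(1,2^{2s})$ using the elementary inequality $(k+1)/k\leq 2$, apply Theorem \ref{Th1.1} to obtain a non-constant even solution $w$, and push it forward by $u(x)=k^{2s/(p-1)}w(kx)$; evenness, non-constancy, and membership in $H^{2s}_+(\mathbb{T})$ all transfer immediately, and the exponent $2s/(p-1)$ is exactly what balances the $k^{2s}$ gained by the Laplacian against the power $p$ on the right. This is a clean and self-contained reduction that, given Theorem \ref{Th1.1}, avoids the blowup argument entirely. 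The trade-off is that the paper's global approach yields structural information about $\mathscr{C}_1$ (its projection covers the whole half-line $[1,+\infty)$), which is then exploited for $p=2$ in Theorem \ref{Th1.2}(a) to obtain solutions for all $\lambda>1$ including the gap points $\lambda=k^{2s}$; your scaling argument cannot reach those points. One further observation: had you invoked Theorem \ref{Th7.5} (which uses the blowup bounds and needs only $p\geq 2$) in place of Theorem \ref{Th1.1}, the same scaling reduction would recover the stronger body statement \ref{TP0} with no upper restriction on $p$.
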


Moreover, if $p=2$, we can refine Theorems \ref{Th1.1} and \ref{TP01} to obtain the following.

\begin{theorem}
	\label{Th1.2}
	The following statements hold regarding the equation
	\begin{equation}
		\label{1.4}
		(-\D)^{s}u = \l u + u^{2}, \quad x\in\mathbb{T}.
	\end{equation}
	
	\begin{itemize} 
		\item[{\rm(a)}] Equation \eqref{1.4} admits at least one non-constant even solution for each $\l > 1$. 
		\item[{\rm(b)}] Equation \eqref{1.4}
		admits at least one non-constant strictly positive even solution for each $\l < -1$.
	\end{itemize}
\end{theorem}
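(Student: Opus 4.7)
The proof exploits two features specific to the quadratic case $p=2$: the translation symmetry $u \mapsto v := u + \l$, which converts $(-\D)^{s} u = \l u + u^{2}$ into $(-\D)^{s} v = -\l v + v^{2}$ and so provides a bijection between solutions at parameters $\l$ and $-\l$; and the strong maximum principle for the periodic fractional Laplacian (see, e.g., \cite{CCM}), which forces any non-constant solution with $\l<0$ to be strictly positive.

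I would handle the positivity claim in (b) first, once and for all. If $u$ is any non-constant solution at parameter $\l<0$ and $x_{0}$ is a minimum point of $u$, the positive-kernel integral representation of $(-\D)^{s}$ on $\mathbb{T}$ gives $(-\D)^{s} u(x_{0})\leq 0$; the equation then forces $u(x_{0})(u(x_{0})+\l)\leq 0$, so $0\leq u(x_{0})\leq -\l$. The equality $u(x_{0})=0$ would yield $(-\D)^{s}u(x_{0})=0$, and the strong maximum principle would then give $u\equiv 0$, contradicting non-constancy. Hence $u>0$ on $\mathbb{T}$, and part (b) reduces to producing a non-constant even solution. Given $\l<-1$, I set $\mu:=-\l>1$, apply (a) to obtain a non-constant even $v$ with $(-\D)^{s}v=\mu v+v^{2}$, and take $u:=v+\mu$; by the translation symmetry $u$ solves \eqref{1.4} and by the previous paragraph $u>0$ on $\mathbb{T}$.

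For (a), Theorem \ref{TP01} applied with $p=2$ already covers $\l\in\bigcup_{k\geq 1}(k^{2s},(k+1)^{2s})$, so only the endpoints $\l=k^{2s}$ with $k\geq 2$ remain. I would handle each such endpoint by tracking the Rabinowitz continuum $\mc{C}\subset\R\times H^{s}_{\mathrm{even}}(\mathbb{T})$ bifurcating from $((k+1)^{2s},0)$ along the Fourier mode $\cos((k+1)x)$. Since the kernel of $(-\D)^{s}-j^{2s}$ restricted to even functions is one-dimensional and spanned by $\cos(jx)$ for each $j\neq k+1$, $\mc{C}$ cannot reconnect to the trivial branch at a different eigenvalue. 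Combining this with a priori $H^{s}$-bounds on solutions in compact $\l$-intervals (derived by testing the equation against $u$ and using the Sobolev embedding $H^{s}(\mathbb{T})\hookrightarrow L^{3}(\mathbb{T})$ valid for $s\geq\tfrac{1}{2}$), I can pass to the limit along any sequence $\l_{n}\downarrow k^{2s}$ with $u_{n}\in\mc{C}$ and extract a non-trivial solution at $\l=k^{2s}$.

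The principal obstacle is the last step: ruling out the degeneration $u_{n}\to 0$ as $\l_{n}\downarrow k^{2s}$, which would leave only the trivial solution in the limit. The resolution rests on the Fourier-diagonal structure of $(-\D)^{s}$: any such collapse would, by the Lyapunov-Schmidt reduction at $(k^{2s},0)$, force $u_{n}$ to align with the local bifurcation direction $\cos(kx)$, contradicting the $\cos((k+1)x)$-nodal structure inherited by $\mc{C}$ from its bifurcation point. This mode-separation argument is precisely what fails for the general multiplier operator $\mc{L}$ of Theorem \ref{Th1.1}, and is the reason Theorem \ref{Th1.2} is restricted to the pure fractional Laplacian.
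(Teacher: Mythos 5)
Your treatment of part (b) is essentially the paper's: the affine change of variables $T(\l,u)=(-\l,u+\l)$ (Proposition \ref{P5.3}) reduces to part (a), and the minimum-point argument (Lemma \ref{L7}) gives $u\geq 0$; your addition of the strong maximum principle to upgrade $u\geq 0$ to $u>0$ is a welcome (arguably necessary) touch, since the paper's reference to Lemma \ref{L7} alone only yields non-negativity. That portion is sound.

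Your argument for part (a) is a genuinely different route, and it does not work as stated. The central difficulty is the direction from which you approach the endpoint $\l=k^{2s}$. You propose to follow the continuum $\mathscr{C}_{k+1}$ emanating from $((k+1)^{2s},0)$ downward to $\l=k^{2s}$. But Theorem \ref{LCRWw}(c) shows that every bifurcation at a point $(\s_j,0)$ with $p=2$ is \emph{supercritical}: near $((k+1)^{2s},0)$, non-trivial solutions exist only for $\l>(k+1)^{2s}$. There is no reason, and no proof offered, that $\mathscr{C}_{k+1}$ ever contains points with $\l<(k+1)^{2s}$, let alone a sequence with $\l_n\downarrow k^{2s}$. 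The paper sidesteps this by working with $\mathscr{C}_1$, proving separately that $\mathscr{C}_1$ is unbounded (via the dilation injections $T_k:\mf{S}\to\mf{S}$ of Propositions \ref{Pr7.20}--\ref{Pr4.6}, which are specific to $(-\D)^s$) and that $[\s_1,+\infty)\subset\mc{P}_\l(\mathscr{C}_1)$ (via the Gidas--Spruck a priori bounds of Theorem \ref{Th6.10}). The endpoint $\l=\s_k$ is then reached from the \emph{left}: a sequence $(\l_n,u_n)\in\mathscr{C}_1$ with $\l_n\uparrow\s_k$, $u_n\not\equiv 0$, cannot converge to $(\s_k,0)$ precisely because the bifurcation there is supercritical. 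This is what makes the supercriticality useful. By approaching from the right, you lose that contradiction and are forced into the much harder job of ruling out $u_n\to 0$ by hand.

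Two further steps in your outline are also unsupported. First, your claim that ``since the kernel of $(-\D)^s-j^{2s}$ is one-dimensional, $\mc{C}$ cannot reconnect to the trivial branch at a different eigenvalue'' does not follow; simplicity of the generalized eigenvalue prevents nothing of the sort, and indeed the paper explicitly entertains the possibility $\mathscr{C}_1=\mathscr{C}_k$ for some $k>1$ (Proposition \ref{Pr4.6}) and handles it rather than excluding it. Second, the proposed resolution of the $u_n\to 0$ obstruction appeals to the $\cos((k+1)x)$ nodal pattern being ``inherited by $\mc{C}$ from its bifurcation point.'' Preservation of nodal structure along a global continuum is a delicate feature of second-order local Sturm--Liouville problems (where it follows from maximum-principle/ODE arguments); it is not established, and not at all routine, for the nonlocal operator $(-\D)^s$. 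The paper never invokes any nodal count, and the restriction of Theorem \ref{Th1.2} to $\mc{L}=(-\D)^s$ is driven by the scaling structure $T_k$ and the Gidas--Spruck blow-up bounds, not by a mode-separation argument.
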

In fact, this result is optimal in the case $s=\tfrac{1}{2}$, that is, for traveling waves of the Benjamin--Ono equation. For this equation, Benjamin \cite{Be} and Amick--Toland \cite{AT} were able to find all the periodic solutions in closed form, see Section \ref{S8} for further details. 

These findings appear to be entirely novel, as previous methods to date have proven inadequate for addressing our specific problem, not only for the general multiplier operator $\mathcal{L}$ but also for the fractional Laplacian $\mc{L}\equiv (-\Delta)^{s}$, even in the special case of $p=2$. As previously noted, these results not only complement and generalize those previously available, but also introduce a new approach to tackle non-local problems through the use of bifurcation theoretical techniques for Fredholm operators. This represents a significant turning point in this field.
\par As we have mentioned, to prove Theorems \ref{Th1.1}, \ref{TP01} and \ref{Th1.2}, we will follow a bifurcation theoretical perspective. More precisely, we will use the Crandall--Rabinowitz Theorem \ref{Cr-Rb} to establish the local existence of solutions, and we will use the global alternative Theorem \ref{TGB} to study the global behavior of the continuum of solutions. This global result was obtained by López-Gómez and Mora-Corral in \cite{LG,LGMC,LGMC1} and sharpened in \cite{LGSM} in the light of the Fitzpatrick, Pejsachowicz and Rabier topological degree \cite{FPRa, FPRb, PR}, a generalization of the Leray--Schauder degree to Fredholm operators of index zero. 
To the best of our knowledge, this is the first time in which the topological degree for Fredholm operators is applied to nonlocal periodic problems of type \eqref{e}.
\par This paper is organized as follows. Section \ref{S2} is dedicated to the exposition of the problem we will address. There, we will also prove some technical preliminaries concerning the operators involved in the analysis of the equation. In Section \ref{S3} we perform an spectral analysis of the linearization of our problem. Section \ref{S4} covers local bifurcation theory. In Section \ref{S5} we study the constant solutions of \eqref{e} and the topological nature of the set consisted on these solutions. Section \ref{S6} is dedicated to the analysis of a priori bounds for solutions of \eqref{e}. For this task, we apply the sharp Gagliardo--Nirenberg--Sobolev inequality recently found by Liang and Wang \cite{LW}. In section \ref{S7} we apply the Global Alternative Theorem \ref{TGB} to prove Theorems \ref{Th1.1}, \ref{TP01} and \ref{Th1.2}. The final section \ref{S8} focuses on the study of the Benjamin--Ono equation at the light of our results. We have also included several appendices to recall some notions of bifurcation theory that will be used throughout this paper.
\par Along this article, given a pair $(U,V)$ of real Banach spaces, the space of linear bounded operators $T:U\to V$ is denoted by $L(U,V)$. Naturally, we set $L(U):=L(U,U)$.  We denote by $GL(U,V)$ the space of topological isomorphisms and $GL(U):=GL(U,U)$.
Given $T\in L(U,V)$, we denote by $N[T]$ and $R[T]$, the kernel and the range of $T$, respectively.
Finally, $\Phi_0(U,V)$ stands for the set of Fredholm operators of index zero $T:U\to V$ and $\Phi_0(U):=\Phi_{0}(U,U)$.

\section{Exposition of the problem and Nonlinear operators}\label{S2}

In this section, we define the problem we aim to study in this paper in detail, along with what we mean by a solution to it. Given $s > 0$, along this paper we work with the Sobolev spaces of periodic functions
\begin{align*}
	H^{s}(\mathbb{T}) :  = \left\{u(x)=\sum_{n\in\Z}\widehat{u}(n) e^{inx} \in L^{2}(\mathbb{T}) \, : \, \overline{\widehat{u}(n)}=\widehat{u}(-n), \; \; \sum_{n\in\Z} |n|^{2s} |\widehat{u}(n)|^{2} < +\infty\right\},
\end{align*}
endowed with the norm
$$\|u\|_{H^{s}}:=\sqrt{\|u\|_{L^{2}}^{2}+\|u\|_{\dot{H}^{s}}^{2}}, \quad \|u\|_{\dot{H}^{s}}:=\left(\sum_{n\in\Z}|n|^{2s}|\widehat{u}(n)|^{2}\right)^{\frac{1}{2}}.$$
Recall that $H^{s}(\mathbb{T})$ is a Hilbert space with inner product
$$
(u,v)_{H^{s}}:=(u,v)_{L^{2}}+(u,v)_{\dot{H}^{s}}, \quad (u,v)_{\dot{H}^{s}}:= \sum_{n\in\Z}|n|^{2s} \widehat{u}(n) \widehat{v}(n).
$$
As we are interested in even solutions of the equation \eqref{e}, subsequently, for each $s > 0$, we will denote by $H_{+}^{s}(\mathbb{T})$ the subspace of $H^{s}(\mathbb{T})$ consisted on even functions. Note that $H^{s}_{+}(\mathbb{T})$ is a Banach space as it is a closed subspace of $H^{s}(\mathbb{T})$. Analogously, we define $L^{q}_{+}(\mathbb{T})\subset L^{q}(\mathbb{T})$ for each $q\geq 1$ and $\mc{C}^{r}_{+}(\mathbb{T})\subset \mc{C}^{r}(\mathbb{T})$ for each $r\geq 0$.
\par The goal of this paper is the study of the nonlocal pseudo-differential equation
\begin{equation}
	\label{Ee}
\mc{L}u= \l u +  |u|^{p}, \quad x\in \mathbb{T}, \;\; u\in H^{2s}_{+}(\mathbb{T}),
\end{equation}
where $s\geq\frac{1}{2}$, $p\geq 2$, $\mc{L}$ is a pseudo-differential operator defined as a Fourier multiplier operator by
$$
\mc{L}u : = \sum_{n\in\Z} |n|^{2s} \mf{m}(n) \widehat{u}(n) e^{inx}, \quad x\in \mathbb{T},
$$
and the multiplier $\mf{m}: \Z \to \R_{\geq 0}$ satisfies the following hypothesis:
\vspace{5pt}
\begin{itemize}
	\item[(\textbf{M1})] $\mf{m}(-n)=\mf{m}(n)$ for each $n\in\Z$.
	\vspace{2pt}
	\item[(\textbf{M2})] The function $\mf{m}:\N\to \R_{\geq0}$ is non-decreasing.
	\vspace{2pt}
	\item[(\textbf{M3})] There exist $m_{0},m_{1}>0$ such that 
	$m_{0} < \mf{m}(n) < m_{1}$
	for all $n\in\Z\backslash \{0\}$.
\end{itemize}
Clearly the operator $\mc{L}$ maps $H^{2s}(\mathbb{T})$ into $L^{2}(\mathbb{T})$ with operator norm $\|\mc{L}\|\leq m_{1}$.
\par A \textit{solution} of \eqref{Ee} is a function $u\in H^{2s}_{+}(\mathbb{T})$ that satisfies \eqref{Ee} pointwise almost everywhere. The solutions of the equation \eqref{Ee} can be rewritten as the zeros of the following nonlinear operator
\begin{equation}
	\label{E3}
	\mf{F}:\R\times H_{+}^{2s}(\mathbb{T})\longrightarrow L_{+}^{2}(\mathbb{T}), \quad \mf{F}(\l,u)=\mc{L}u-\l u - |u|^{p}.
\end{equation}
The following result establishes that $\mf{F}$ is well defined.
\begin{lemma}
	Let $(\l, u)\in \R\times H_{+}^{2s}(\mathbb{T})$. Then, 
	\begin{equation}
		\label{e1}
 \mf{F}(\l,u)=\mc{L}u-\l u - |u|^{p}\in L^{2}_{+}(\mathbb{T}).
\end{equation}
\end{lemma}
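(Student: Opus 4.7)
The plan is to verify the two things encoded in the statement $\mf{F}(\l,u)\in L^{2}_{+}(\mathbb{T})$, namely that $\mc{L}u-\l u-|u|^{p}$ is (i) square-integrable on $\mathbb{T}$ and (ii) an even function. I would handle each term separately and then sum.

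For the $L^{2}$-integrability: the hypothesis $\mc{L}\in L(H^{2s}(\mathbb{T}),L^{2}(\mathbb{T}))$ with $\|\mc{L}\|\leq m_{1}$, which is immediate from the Plancherel identity and hypothesis $(\textbf{M3})$, already handles $\mc{L}u$, and $\l u\in L^{2}(\mathbb{T})$ is obvious since $H^{2s}(\mathbb{T})\hookrightarrow L^{2}(\mathbb{T})$. The only slightly delicate term is the nonlinear one $|u|^{p}$. Here I would invoke the one-dimensional Sobolev embedding: since $2s\geq 1>\tfrac{1}{2}$, we have $H^{2s}(\mathbb{T})\hookrightarrow \mc{C}(\mathbb{T})\hookrightarrow L^{\infty}(\mathbb{T})$, so $u$ is bounded, which yields $|u|^{p}\in L^{\infty}(\mathbb{T})\subset L^{2}(\mathbb{T})$. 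This is the main (and in fact only) obstacle; everything else is trivial.

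For the evenness: if $u\in H^{2s}_{+}(\mathbb{T})$, then $\widehat{u}(-n)=\widehat{u}(n)$ for all $n\in\Z$. Using hypothesis $(\textbf{M1})$, the Fourier coefficients of $\mc{L}u$ satisfy
\begin{equation*}
|-n|^{2s}\mf{m}(-n)\widehat{u}(-n)=|n|^{2s}\mf{m}(n)\widehat{u}(n),
\end{equation*}
so $\mc{L}u$ is even. The function $\l u$ is clearly even. Finally, $|u(-x)|^{p}=|u(x)|^{p}$ since $u$ is even, so $|u|^{p}$ is even as well. Adding the three terms, $\mf{F}(\l,u)$ is even, which combined with the previous paragraph gives $\mf{F}(\l,u)\in L^{2}_{+}(\mathbb{T})$, as desired.
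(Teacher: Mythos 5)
Your proof is correct and follows essentially the same strategy as the paper: decompose $\mf{F}(\l,u)$ term by term, use the boundedness of $\mc{L}:H^{2s}(\mathbb{T})\to L^{2}(\mathbb{T})$ coming from (M3) for the linear part, and use the Sobolev embedding $H^{2s}(\mathbb{T})\hookrightarrow \mc{C}(\mathbb{T})\hookrightarrow L^{\infty}(\mathbb{T})$ (valid since $s\geq\tfrac{1}{2}$) to handle $|u|^{p}$, then verify evenness of each summand. The one place where you diverge is the evenness of $\mc{L}u$: the paper writes $\mc{L}u$ as a cosine series, forms the even partial sums $v_{m}$, and passes to an a.e.\ convergent subsequence to conclude; you instead read evenness directly off the Fourier side from $\widehat{\mc{L}u}(-n)=|{-n}|^{2s}\mf{m}(-n)\widehat{u}(-n)=|n|^{2s}\mf{m}(n)\widehat{u}(n)=\widehat{\mc{L}u}(n)$, using (M1) together with the evenness condition $\widehat{u}(-n)=\widehat{u}(n)$. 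Your version is shorter and avoids the subsequence extraction, but both arguments establish the same fact; overall the two proofs are the same in substance.
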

\begin{proof}
	First of all, note that for $u\in H^{2s}_{+}(\mathbb{T})$,
	$$
	\|\mc{L}u\|_{L^{2}}^{2}=2\pi\sum_{n\in\Z}|n|^{4s}|\mf{m}(n)|^{2}|\widehat{u}(n)|^{2} \leq 2\pi m_{1}^{2} \sum_{n\in\Z}|n|^{4s}|\widehat{u}(n)|^{2} \leq 2\pi m_{1}^{2}\|u\|_{H^{2s}}^{2},
	$$
	where we have used hypothesis (M3). Then, by a direct bound, we obtain
	\begin{align*}
		\|\mc{L}u-\l u - |u|^{p}\|_{L^{2}}  & \leq \|\mc{L}u\|_{L^{2}}+|\l|\|u\|_{L^{2}}+\||u|^{p}\|_{L^{2}}   \\
		& \leq \sqrt{2\pi} m_{1} \|u\|_{H^{2s}} +  |\l| \|u\|_{L^{2}} + \left(\int_{\mathbb{T}}|u|^{2p}\right)^{\frac{1}{2}}.
	\end{align*}
	Therefore,  the inclusion $\mf{F}(\l,u)\in L^{2}(\mathbb{T})$ holds if $u\in L^{2p}(\mathbb{T})$.
	As $s\geq \tfrac{1}{2}$, by the Sobolev embedding,
	$
	H^{2s}(\mathbb{T})\hookrightarrow\mc{C}(\mathbb{T})
	$,
	we infer that $u\in \mc{C}(\mathbb{T})\hookrightarrow L^{\infty}(\mathbb{T})$. Therefore,
	\begin{equation*}
		\int_{\mathbb{T}}|u|^{2p}\leq 2\pi\|u\|_{L^{\infty}}^{2p}< +\infty.
	\end{equation*}
	This proves that $\mf{F}(\l,u)\in L^{2}(\mathbb{T})$. Finally, we will show that if $u\in H^{2s}(\mathbb{T})$ is even, then $\mf{F}(\l,u)\in L^{2}(\mathbb{T})$ is also even. Suppose that $u\in H^{2s}_{+}(\mathbb{T})$. Then, as $u$ is even, we have
	$$
		\mc{L}u =  2\sum_{n=1}^{+\infty} |n|^{2s}\mf{m}(n) \widehat{u}(n) \cos(nx).
	$$
	Now, consider the sequence of even functions $\{v_{m}\}_{m\in\N}\subset H^{2s}_{+}(\mathbb{T})$ defined by
	$$
	v_{m}(x):=	  2\sum_{n=1}^{m}|n|^{2s}\mf{m}(n) \widehat{u}(n) \cos(nx), \quad x\in\mathbb{T}.
	$$
	Then, $v_{m}\to \mc{L}u$ as $m\to +\infty$ in $L^{2}(\mathbb{T})$. Therefore, there exists a subsequence $\{v_{m_{k}}\}_{k\in\N}$ such that $v_{m_{k}}\to \mc{L}u$ as $k\to +\infty$ almost everywhere. This implies that $\mc{L}u$ is even. Finally,
	\begin{align*}
		[\mf{F}(\l,u)](-x) & = [\mc{L}u](-x)-\l u(-x) - |u(-x)|^{p} \\
		& = [\mc{L}u](x)-\l u(x) - |u(x)|^{p} = [\mf{F}(\l,u)](x),
	\end{align*}
	for almost every $x\in\mathbb{T}$. Then $\mf{F}(\l,u)\in L^{2}_{+}(\mathbb{T})$. This concludes the proof.
\end{proof}
To state the following result, we first define the function 
$$
\o: (1,+\infty) \longrightarrow [0,+\infty], \quad \o(p):= \left\{\begin{array}{ll}
	\lfloor p \rfloor & \hbox{if} \;\; p\notin \N, \\
	p-1 & \hbox{if} \;\; p \in 2\N-1, \\
	+\infty & \hbox{if} \;\; p\in 2\N.
	\end{array}\right.
$$
The following lemma provides $\mf{F}$ with the sufficient regularity for the subsequent arguments. 
\begin{lemma}
	\label{L1}
	The operator $\mf{F}$ is of class $\mc{C}^{\o(p)-1}$, symbolically, $\mf{F}\in\mc{C}^{\o(p)-1}(\R\times  H_{+}^{2s}(\mathbb{T}), L_{+}^{2}(\mathbb{T}))$. Moreover, the first three derivatives are given by
	\begin{itemize}
		\item If $p\geq 2$, for all $(\l,u)\in\R\times  H_{+}^{2s}(\mathbb{T})$,
		\begin{align}
			\label{E4}
	& \partial_{u}\mf{F}(\l,u): H_{+}^{2s}(\mathbb{T}) \longrightarrow L_{+}^{2}(\mathbb{T}), \\
	 & \partial_{u}\mf{F}(\l,u)[v]=\mc{L}v-\l v - p u |u|^{p-2} v. \nonumber
      \end{align}
      \item If $p > 3$, for all $(\l,u)\in\R\times  H_{+}^{2s}(\mathbb{T})$, 
		\begin{align*}
		& \partial_{uu}^{2}\mf{F}(\l,u):H_{+}^{2s}(\mathbb{T})\times H_{+}^{2s}(\mathbb{T}) \longrightarrow L_{+}^{2}(\mathbb{T}), \\
		& \partial_{uu}^{2}\mf{F}(\l,u)[v_1,v_2] = -p(p-1)|u|^{p-2}v_1 v_2.
	     \end{align*}
		\item If $p \geq 4$, for all $(\l,u)\in\R\times  H_{+}^{2s}(\mathbb{T})$, 
		\begin{align*}
		& \partial_{uuu}^{3}\mf{F}(\l,u):H_{+}^{2s}(\mathbb{T})\times H_{+}^{2s}(\mathbb{T}) \times H_{+}^{2s}(\mathbb{T}) \longrightarrow L_{+}^{2}(\mathbb{T}), \\
		& \partial_{uuu}^{3}\mf{F}(\l,u)[v_1,v_2,v_3] = -p(p-1)(p-2)u|u|^{p-4}v_1 v_2 v_3.
		\end{align*}
	\end{itemize}
	
\end{lemma}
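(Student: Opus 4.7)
The plan is to split $\mf{F}(\l, u) = (\mc{L} u - \l u) - G(u)$ with $G(u) := |u|^{p}$. The first summand is the sum of a bounded linear map in $u$ and a bilinear map in $(\l,u)$ taking values in $L^{2}_{+}(\mathbb{T})$, hence of class $\mc{C}^{\infty}$ with only the obvious first and second order partials. Consequently, all regularity issues concentrate in the Nemytskii operator $G$. Since $s \geq \tfrac12$, the Sobolev embedding provides a continuous inclusion $H^{2s}_{+}(\mathbb{T}) \hookrightarrow C_{+}(\mathbb{T}) \hookrightarrow L^{\infty}(\mathbb{T})$, so on any $H^{2s}_{+}$-bounded set the $L^{\infty}$-norm is uniformly controlled. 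This reduces the question to studying the superposition operator associated with $\phi_{p}(t) := |t|^{p}$ on $\R$, which we factor as $H^{2s}_{+}(\mathbb{T}) \hookrightarrow C_{+}(\mathbb{T}) \xrightarrow{N_{\phi_{p}}} C_{+}(\mathbb{T}) \hookrightarrow L^{2}_{+}(\mathbb{T})$.

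I would then establish that $\phi_{p} \in \mc{C}^{\o(p)}(\R)$ by a direct case analysis. The successive derivatives of $\phi_{p}$ alternate between $c_{j}|t|^{p-j}$ (even order $j$) and $c_{j}t|t|^{p-j-1}$ (odd order $j$), and each such expression is continuous at $t=0$ precisely when the corresponding exponent is nonnegative. Splitting into the three cases $p \notin \N$, $p$ odd, and $p$ even yields exactly the regularity index $\o(p)$ from the definition, and the computations $\phi_{p}'(t) = pt|t|^{p-2}$, $\phi_{p}''(t) = p(p-1)|t|^{p-2}$, $\phi_{p}'''(t) = p(p-1)(p-2)t|t|^{p-4}$ explain the three thresholds $p \geq 2$, $p > 3$, $p \geq 4$ under which each iterated derivative is continuous on all of $\R$.

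Next I would invoke the standard Nemytskii regularity on continuous functions: if $f \in \mc{C}^{k}(\R)$, then $N_{f} : C(\mathbb{T}) \to C(\mathbb{T})$, $N_{f}(u)(x) = f(u(x))$, is of class $\mc{C}^{k-1}$, with Fréchet derivatives $N_{f}^{(j)}(u)[v_{1},\ldots,v_{j}](x) = f^{(j)}(u(x))\,v_{1}(x)\cdots v_{j}(x)$ for $0 \leq j \leq k-1$. Composing with the inclusions above and applying this to $\phi_{p}$ with $k = \o(p)$ yields $G \in \mc{C}^{\o(p)-1}(H^{2s}_{+}, L^{2}_{+})$, hence $\mf{F} \in \mc{C}^{\o(p)-1}$, and substituting the explicit expressions for $\phi'_{p}$, $\phi''_{p}$, $\phi'''_{p}$ into these formulas recovers the three displayed derivatives.

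The main obstacle will be the Fréchet differentiability at the top order. The technical heart is a pointwise Taylor remainder estimate
$$\Bigl|\phi_{p}(t+h) - \sum_{j=0}^{\o(p)-1}\tfrac{1}{j!}\phi_{p}^{(j)}(t)\,h^{j}\Bigr| \leq \varepsilon_{R}(|h|)\,|h|^{\o(p)-1}, \quad |t| \leq R,$$
with $\varepsilon_{R}(\d) \to 0$ as $\d \to 0$, coming from uniform continuity of $\phi_{p}^{(\o(p)-1)}$ on $[-2R, 2R]$. Combined with the $L^{\infty}$-control from the Sobolev embedding, this transfers at the operator level to $\|G(u+v) - \sum_{j=0}^{\o(p)-1}\tfrac{1}{j!}G^{(j)}(u)[v,\ldots,v]\|_{L^{2}} = o(\|v\|_{H^{2s}}^{\o(p)-1})$, and continuity of the derivatives follows from the analogous uniform-continuity argument applied to $u \mapsto \phi_{p}^{(j)} \circ u$.
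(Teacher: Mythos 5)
Your proposal is correct and follows essentially the same route as the paper: reduce to the superposition operator $|u|^p$ via the Sobolev embedding $H^{2s}_{+}(\mathbb{T})\hookrightarrow C_{+}(\mathbb{T})$, then control Taylor remainders of $\phi_p=|\cdot|^p$ using uniform continuity of the relevant derivative on compacts; the paper carries out this estimate explicitly at first order and remarks that higher orders are handled identically, whereas you package it as a standalone Nemytskii regularity lemma. One small imprecision worth correcting: the thresholds $p\geq 2$, $p>3$, $p\geq 4$ do not mark where $\phi_p'$, $\phi_p''$, $\phi_p'''$ themselves become continuous (that happens at $p>1$, $p>2$, $p>3$ respectively), but rather where $\phi_p\in\mc{C}^{j+1}$, i.e.\ one order higher, which is precisely what the ``lose one order'' Nemytskii lemma you invoke requires.
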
 

\begin{proof}
	Let us start by proving the continuity of the operator $\mf{F}$. Let $\{(\l_{n},u_{n})\}_{n\in\N}\subset\R\times H_{+}^{2s}(\mathbb{T})$ be a sequence satisfying 
	$$\lim_{n\to+\infty}\l_n = \l_0 \quad \text{ and } \quad
	\lim_{n\to+\infty}u_{n}=u_{0} \quad \text{ in } \; \; H_{+}^{2s}(\mathbb{T}).$$
	Choose $N\in\N$ such that $\|u_{n}-u_{0}\|_{H^{2s}}<1$ for each $n\geq N$. In particular, this implies that
	\begin{equation}
		\label{Eqqq}
	\|u_{n}\|_{H^{2s}}<1+\|u_{0}\|_{H^{2s}}, \quad n\geq N.
	\end{equation}
	We rewrite the difference of the corresponding operator as
	\begin{equation}
		\label{p1}
		\mf{F}(\l_{n},u_{n})-\mf{F}(\l_{0},u_{0})=\mc{L}(u_{n}-u_{0})-(\l_{n}-\l_{0})u_{n}-\l_{0}(u_{n}-u_{0}) - (|u_n|^{p}-|u_0|^{p}).
	\end{equation}
	Fixing $n\geq N$ and taking the $L^{2}$-norm in equation \eqref{p1}, we obtain
	\begin{align}
		\label{E5}
		\|\mf{F}(\l_{n},u_{n})-\mf{F}(\l_{0},u_{0})\|_{L^{2}} 
		\leq C \left( \|u_{n}-u_{0}\|_{H^{2s}}+|\l_{n}-\l_{0}|+\|u_{n}-u_{0}\|_{L^{2}}+\||u_n|^{p}-|u_0|^{p}\|_{L^{2}} \right),
	\end{align}
	where we have used inequality \eqref{Eqqq}. On the other hand, setting 
	$$
	f(x)=|x|^{p}, \quad x\in\R,
	$$
	we have that $f\in \mc{C}^{\o(p)}(\R)\subset \mc{C}^{1}(\R)$, and therefore, by the mean value theorem, for each $n\geq N$ (enlarging $N$ if necessary), we have 
	$$
	\|f(u_n)-f(u_0)\|_{L^{\infty}}\leq M \|u_n-u_0\|_{L^{\infty}}, \quad M:=\max\left\{|f'(x)|  :  x\in [-1-\|u_0\|_{L^{\infty}},1+\|u_0\|_{L^{\infty}}]\right\}.
	$$
	By the Sobolev embedding $H^{2s}(\mathbb{T})\hookrightarrow \mc{C}(\mathbb{T})$, we infer that $\|u_n-u_0\|_{L^{\infty}}\to 0$ as $n\to+\infty$. Then, taking the limit, we obtain
	\begin{align}
		\label{e3}
		\lim_{n\to+\infty}\|f(u_n)-f(u_0)\|_{L^{2}}^{2} & =\lim_{n\to+\infty}\int_{\mathbb{T}}|f(u_n)-f(u_0)|^{2}\leq 2\pi M^{2} \lim_{n\to+\infty}\|u_n-u_0\|^{2}_{L^{\infty}} = 0.
	\end{align}
	Inequalities \eqref{E5} and \eqref{e3} yield
	$$\lim_{n\to+\infty}\|\mf{F}(\l_{n},u_{n})-\mf{F}(\l_{0},u_{0})\|_{L^{2}}=0.$$
	Hence, this implies that $\mf{F}$ is continuous.
	\par Let us prove that $\mf{F}\in\mc{C}^{\o(p)-1}$. It is enough to prove $\mf{F}\in\mc{C}^{1}$ for $p\geq 2$ as the proof of the higher differentiability relies in the same techniques. We start by proving \eqref{E4}. Let $(\l,u)\in\R\times H^{2s}_{+}(\mathbb{T})$ and $v\in H^{2s}_{+}(\mathbb{T})$. We can rewrite
	\begin{align*}
		\mf{F}(\l,u+v)-\mf{F}(\l,u)-\mathcal{L}v+\l v+ f'(u) v & =-\left(f(u+v)-f(u)-f'(u)v\right) \\
		& = - \left(\int_{0}^{1} (f'(u+tv)-f'(u))\; dt \right) v.
	\end{align*}
 Consequently, a direct bound on the $L^{2}$-norm and the Jensen's inequality yield
	\begin{align*}
		\|f(u+v)-f(u)-f'(u)v \|_{L^{2}} & = \left\| \left(\int_{0}^{1} (f'(u+tv)-f'(u))\; dt \right) v \right\|_{L^{2}} \\
		& \leq \sqrt{2\pi} \|v\|_{L^{\infty}}\left(\int_{0}^{1}\|f'(u+tv)-f'(u)\|_{L^{\infty}}^{2} \; dt\right)^{\frac{1}{2}}
	\end{align*}
	Take $\varepsilon>0$ and suppose that $\|v\|_{L^{\infty}}<\varepsilon$. By the Sobolev embedding $H^{2s}(\mathbb{T})\hookrightarrow L^{\infty}(\mathbb{T})$ and the application of the mean value theorem to $f'$ (note that $f'\in\mc{C}^{\o(p)-1}(\R)\subset \mc{C}^{1}(\R)$ because $p\geq 2$), we deduce that
	\begin{align*}
		\|f(u+v)-f(u)-f'(u)v \|_{L^{2}} &  \leq C_{1} M(v) \|v\|_{H^{2s}}\left(\int_{0}^{1}|t|^{2}\|v\|_{L^{\infty}}^{2} \; dt\right)^{\frac{1}{2}} \leq C_{2} \|v\|_{H^{2s}}^{2},
	\end{align*}
	for some constants $C_{1}, C_{2}>0$ independent of $v$ and where we have denoted
	$$
	M(v):=\max\left\{|f''(x)| \; :  |x|\leq \|u\|_{L^{\infty}}+\|v\|_{L^{\infty}}\right\}\leq \max\left\{|f''(x)| \; :  |x|\leq \|u\|_{L^{\infty}}+\varepsilon\right\}.
	$$
	Then, by the definition of differentiability,
	\begin{align*}
		\lim_{v\to 0}\frac{\|\mf{F}(\l,u+v)-\mf{F}(\l,u)-\mathcal{L}v+\l v + f'(u) v\|_{L^{2}}}{\|v\|_{H^{2s}}} 
		\leq C_{2} \lim_{v\to 0}\|v\|_{H^{2s}}=0.
	\end{align*}
	This proves \eqref{E4}. The continuity of $\partial_{u}\mf{F} : \R\times H^{2s}_{+}(\mathbb{T})\to L(H^{2s}_{+}(\mathbb{T}), L^{2}_{+}(\mathbb{T}))$ is proven in the same way we proved the continuity of $\mf{F}$. This concludes the proof.
\end{proof}

The next is a regularity result for the weak solutions of the corresponding linear pseudo--differential equation involving the Fourier multiplier operator $\mc{L}$. The proof is an adaptation to the nonlocal case of the Friedrichs' theorem, see for instance pages 177--182 of Yosida \cite{Y}. Given $c\in L^{\infty}_{+}(\mathbb{T})$ and $f\in L^{2}_{+}(\mathbb{T})$, recall that a \textit{weak solution} of the equation $\mc{L}u + c(x)u= f$, $x\in\mathbb{T}$, is a function $u\in H^{s}_{+}(\mathbb{T})$ such that
$$
\int_{\mathbb{T}}\mc{L}u \; \varphi +\int_{\mathbb{T}}c(x) u \varphi= \int_{\mathbb{T}}f \varphi, \quad \text{for each} \; \; \varphi\in \mc{C}^{\infty}_{+}(\mathbb{T}),
$$
or equivalently,
$$
2\pi\sum_{n\in\Z}|n|^{2s}\mf{m}(n)\widehat{u}(n)\widehat{\varphi}(n) +\int_{\mathbb{T}}c(x) u \varphi= \int_{\mathbb{T}}f \varphi, \quad \text{for each} \; \; \varphi\in \mc{C}^{\infty}_{+}(\mathbb{T}).
$$

\begin{proposition}
	\label{Reg}
	Consider $c\in L^{\infty}_{+}(\mathbb{T})$ and $f\in L^{2}_{+}(\mathbb{T})$. Let $u\in H_{+}^{s}(\mathbb{T})$ be a weak solution of the problem
	\begin{equation}
		\label{E7.1}
		\mc{L}u+c(x)u=f, \quad x\in\mathbb{T}.
	\end{equation}
	Then, $u\in H_{+}^{2s}(\mathbb{T})$ and 
	\begin{equation}
		\|u\|_{H^{2s}}\leq C (\|f\|_{L^{2}}+\|u\|_{L^{2}}),
	\end{equation}
	for some positive constant $C>0$ independent of $u$.
\end{proposition}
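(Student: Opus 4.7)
The plan is to exploit the diagonalisation of $\mc{L}$ in the Fourier basis: this reduces the weak equation \eqref{E7.1} to a family of scalar identities among Fourier coefficients, from which the coercivity hypothesis (\textbf{M3}) extracts a gain of $2s$ derivatives. First, I would set $g := f - cu$. Because $c\in L^{\infty}_{+}(\mathbb{T})$ and $u\in H^{s}_{+}(\mathbb{T})\hookrightarrow L^{2}_{+}(\mathbb{T})$, the function $g$ lies in $L^{2}_{+}(\mathbb{T})$ with
$$
\|g\|_{L^{2}} \leq \|f\|_{L^{2}} + \|c\|_{L^{\infty}}\|u\|_{L^{2}},
$$
and the weak formulation takes the form
$$
2\pi\sum_{n\in\Z}|n|^{2s}\mf{m}(n)\widehat{u}(n)\widehat{\varphi}(n) = \int_{\mathbb{T}} g\varphi, \qquad \varphi \in \mc{C}^{\infty}_{+}(\mathbb{T}).
$$

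Next, I would plug in the specific test functions $\varphi_{N}(x):=\cos(Nx)\in \mc{C}^{\infty}_{+}(\mathbb{T})$, one for each $N\geq 1$. Using (\textbf{M1}), the evenness of $u$ and $g$, and the orthogonality relation $\int_{\mathbb{T}}e^{inx}\cos(Nx)\,dx=\pi$ if $n=\pm N$ and $0$ otherwise, both sides of the weak equation collapse to a single term and yield the pointwise identity
$$
N^{2s}\mf{m}(N)\widehat{u}(N) = \widehat{g}(N), \qquad N\geq 1.
$$
Invoking (\textbf{M3}), namely $\mf{m}(N)\geq m_{0}>0$, this rearranges as $N^{2s}|\widehat{u}(N)|\leq m_{0}^{-1}|\widehat{g}(N)|$. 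Squaring, summing over $N\in\Z\backslash\{0\}$ (the mode $n=0$ contributes nothing to $\|u\|_{\dot H^{2s}}$), and applying Parseval yield
$$
\|u\|_{\dot H^{2s}}^{2} = \sum_{n\in\Z}|n|^{4s}|\widehat{u}(n)|^{2} \leq \frac{1}{m_{0}^{2}}\sum_{n\in\Z}|\widehat{g}(n)|^{2} = \frac{1}{2\pi m_{0}^{2}}\|g\|_{L^{2}}^{2},
$$
so that $u\in H^{2s}_{+}(\mathbb{T})$. Combining this with the $L^{2}$-bound on $g$ produces the desired estimate $\|u\|_{H^{2s}}\leq C(\|f\|_{L^{2}}+\|u\|_{L^{2}})$ with $C$ depending only on $m_{0}$ and $\|c\|_{L^{\infty}}$.

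The main point is that, since $\mc{L}$ is a Fourier multiplier, the usual Friedrichs mollification procedure of Yosida collapses to this one-line Fourier bookkeeping, and (\textbf{M3}) plays the role of ellipticity. There is no serious obstacle: the only step requiring minimal care is the opening reduction, namely that $\mc{L}u$, a priori living only in $H^{-s}$, pairs with $\varphi\in\mc{C}^{\infty}$ exactly through the Fourier sum written above; this is immediate from $u\in H^{s}$, the boundedness of $\mf{m}$, and the Schwartz decay of $\widehat{\varphi}$.
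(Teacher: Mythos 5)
Your argument is correct, and it takes a genuinely different and substantially shorter route than the paper's. The paper adapts Friedrichs' regularity technique (following Yosida): it fixes $\mu>\|c\|_{L^\infty}$, constructs two coercive bilinear forms $\mf{a},\mf{b}$ on $H^{s}_{+}(\mathbb{T})$ together with their counterparts $\mf{a}_1,\mf{b}_1$ on $H^{2s}_{+}(\mathbb{T})$, invokes the Lax--Milgram theorem four times to produce isomorphisms $T,Q,T_1,Q_1$, and proves that $QT^{-1}=Q_1T_1^{-1}$ maps into $H^{2s}_{+}(\mathbb{T})$, from which the estimate is extracted. You sidestep all of this by exploiting that $\mc{L}$ is a constant-coefficient Fourier multiplier: absorbing $c(x)u$ into the data $g=f-cu\in L^{2}_{+}(\mathbb{T})$ and testing the weak formulation against $\cos(Nx)$ yields the exact coefficient identity $N^{2s}\mf{m}(N)\widehat{u}(N)=\widehat{g}(N)$, after which hypothesis (M3) closes the argument in two lines with a transparent constant $C=C(m_0,\|c\|_{L^{\infty}})$. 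The paper's heavier machinery would become indispensable if the leading-order operator had genuinely variable coefficients, so that no single basis diagonalises it; but here, where the principal part is a pure Fourier multiplier and the only variable-coefficient term is of zeroth order and thus harmlessly absorbable into the right-hand side, your direct diagonalisation is the natural tool and gives the same conclusion with far less overhead.
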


\begin{proof}
	Choose $\mu > \|c\|_{L^{\infty}}$. Let us consider the bilinear form $\mf{a}:H_{+}^{s}(\mathbb{T})\times H_{+}^{s}(\mathbb{T}) \to \R$ defined by
	\begin{equation*}
		\mf{a}(u_{1},u_{2}):=2\pi\sum_{n\in\Z}|n|^{2s}\mf{m}(n)\widehat{u}_{1}(n)\widehat{u}_{2}(n) +  \int_{\mathbb{T}} (\mu+c(x))u_{1}u_{2} .
	\end{equation*}
	The continuity of $\mf{a}$ is easily proven by a direct bound as
	\begin{align*}
		|\mf{a}(u_{1},u_{2})|  \leq 2\pi m_{1}\|u_{1}\|_{\dot{H}^{s}}\|u_{2}\|_{\dot{H}^{s}}+(\mu+\|c\|_{L^{\infty}})\|u_{1}\|_{L^{2}}\|u_{2}\|_{L^{2}} \leq C \|u_{1}\|_{H^{s}}\|u_{2}\|_{H^{s}}.
	\end{align*}
	On the other hand, the bilinear form $\mf{a}$ is coercive. Indeed,
	\begin{align*}
		\mf{a}(u,u) & =2\pi\sum_{n\in\Z}|n|^{2s}\mf{m}(n)|\widehat{u}(n)|^{2} +  \int_{\mathbb{T}} (\mu+c(x))u^{2} \\
		& \geq 2\pi m_{0} \|u\|_{\dot{H}^{s}}^{2}+(\mu-\|c\|_{L^{\infty}})\|u\|_{L^{2}}^{2} \geq C \|u\|^{2}_{H^{s}}.
	\end{align*}
	The application of the Lax--Milgram theorem to $\mf{a}$ implies the existence of a bounded linear isomorphism $T: H^{s}_{+}(\mathbb{T})\to H^{s}_{+}(\mathbb{T})$ such that
	$$
	\mf{a}(T u_{1},u_{2})=(u_{1},u_{2})_{H^{s}}, \quad u_{1},u_{2}\in H^{s}_{+}(\mathbb{T}).
	$$
	Let $\varepsilon>0$. Repeating the argument for the constant coefficient bilinear form $\mf{b}:H_{+}^{s}(\mathbb{T})\times H_{+}^{s}(\mathbb{T}) \to \R$ defined by
	\begin{equation*}
		\mf{b}(u_{1},u_{2}):=2\pi\sum_{n\in\Z}|n|^{2s}\mf{m}(n)\widehat{u}_{1}(n)\widehat{u}_{2}(n)+\varepsilon\int_{\mathbb{T}}u_{1}u_{2},
	\end{equation*}
	we prove the existence of a bounded linear isomorphism $Q: H^{s}_{+}(\mathbb{T})\to H^{s}_{+}(\mathbb{T})$ such that
	$$
	\mf{b}( Q u_{1}, u_{2})=(u_{1},u_{2})_{H^{s}}, \quad u_{1},u_{2}\in H^{s}_{+}(\mathbb{T}).
	$$
	Therefore, we have proved the identity 
	\begin{equation}
		\label{E9}
		\mf{a}(u_{1},u_{2})=\mf{b}(Q T^{-1} u_{1}, u_{2}), \quad u_{1},u_{2}\in H^{s}_{+}(\mathbb{T}).
	\end{equation}
	We proceed to prove that, in fact, $Q T^{-1}: H^{2s}_{+}(\mathbb{T})\to H^{2s}_{+}(\mathbb{T})$. On the one hand, note that for $\e>0$ and each $u_{2}\in\mc{C}^{\infty}_{+}(\mathbb{T})$,
	\begin{equation}
		\label{E10}
		\mf{a}(u_{1},\mc{L}u_{2}+\varepsilon u_{2})=\mf{b}(Q T^{-1} u_{1}, \mc{L}u_{2}+\varepsilon u_{2}), \quad u_{1}\in H^{s}_{+}(\mathbb{T}).
	\end{equation}
	We introduce the bilinear forms $\mf{a}_{1},\mf{b}_{1}:H^{2s}_{+}(\mathbb{T})\times H^{2s}_{+}(\mathbb{T})\to\R$ defined by
	\begin{align*}
		& \mf{a}_{1}(u_{1},u_{2}):=2\pi\sum_{n\in\Z}|n|^{4s}\mf{m}^{2}(n)\widehat{u}_{1}(n)\widehat{u}_{2}(n)+2\pi\varepsilon\sum_{n\in\Z}|n|^{2s}\mf{m}(n)\widehat{u}_{1}(n)\widehat{u}_{2}(n)+\int_{\mathbb{T}}(\mu+c(x))u_{1}(\mc{L}u_{2}+\varepsilon u_{2}), \\
		& \mf{b}_{1}(u_{1},u_{2}):=2\pi\sum_{n\in\Z}|n|^{4s}\mf{m}^{2}(n)\widehat{u}_{1}(n)\widehat{u}_{2}(n)+2\pi\varepsilon\sum_{n\in\Z}|n|^{2s}\mf{m}(n)\widehat{u}_{1}(n)\widehat{u}_{2}(n)+\int_{\mathbb{T}}u_{1}(\mc{L}u_{2}+\varepsilon u_{2}).
	\end{align*}
	It must be observed that if $u_{2}\in\mc{C}^{\infty}_{+}(\mathbb{T})$, then
	\begin{align*}
		\mf{a}_{1}(u_{1},u_{2})=\mf{a}(u_{1},\mc{L}u_{2}+\varepsilon u_{2}), \quad \mf{b}_{1}(u_{1},u_{2})=\mf{b}(u_{1}, \mc{L}u_{2}+\varepsilon u_{2}).
	\end{align*}
	By a similar analysis to the one we have performed earlier, we can prove, modifying $\e$ if necessary, that $\mf{a}_{1}$ and $\mf{b}_{1}$ are continuous and coercive. Therefore, applying the Lax--Milgram theorem to the bilinear forms $\mf{a}_{1}$ and $\mf{b}_{1}$, we infer the existence of bounded linear isomorphisms $T_{1}, Q_{1} : H^{2s}_{+}(\mathbb{T})\to H^{2s}_{+}(\mathbb{T})$ such that
	$$
	\mf{a}(T_{1} u_{1},\mc{L}u_{2}+\varepsilon u_{2})=(u_{1},u_{2})_{H^{2s}}, \quad \mf{b}(Q_{1} u_{1}, \mc{L}u_{2}+\varepsilon v_{2})=(u_{1},u_{2})_{H^{2s}},
	$$
	for all $u_{1}\in H^{2s}_{+}(\mathbb{T})$ and $u_{2}\in\mc{C}^{\infty}_{+}(\mathbb{T})$. Therefore, we have
	$$
	\mf{a}(u_{1},\mc{L}u_{2}+\varepsilon u_{2})=\mf{b}(Q_{1}T_{1}^{-1}u_{1}, \mc{L}u_{2}+\varepsilon u_{2}), \quad u_{1}\in H^{2s}_{+}(\mathbb{T}), \; u_{2}\in\mc{C}^{\infty}_{+}(\mathbb{T}).
	$$
	Then, by the identity \eqref{E10} we deduce that for each $u_{1}\in H^{2s}_{+}(\mathbb{T})$,
	$$
	\mf{b}([QT^{-1}-Q_{1}T_{1}^{-1}]u_{1}, \mc{L}u_{2}+\varepsilon u_{2})=0, \quad u_{2}\in\mc{C}^{\infty}_{+}(\mathbb{T}).
	$$
	Denoting $v := [QT^{-1}-Q_{1}T_{1}^{-1}]u_{1}$ and expanding $v, u_{2}$ in Fourier series, we deduce that
	\begin{equation*}
		\sum_{n\in \Z} (|n|^{2s}\mf{m}(n)+\varepsilon)^2 \widehat{v}(n)\widehat{u}_{2}(n)=0.
	\end{equation*}
	Choosing for each $k\in\Z$, the test function $u_{2}(x)=\cos(kx)$, we deduce that $\widehat{v}(k)=0$ for all $k\in\Z$. Hence $v\equiv 0$ and this implies that $QT^{-1}u = Q_{1}T_{1}^{-1}u$ for each $u\in H^{2s}_{+}(\mathbb{T})$. 
	
	Finally, we prove that every weak solution $u\in H^{s}_{+}(\mathbb{T})$ of \eqref{E7.1} lives in $H^{2s}_{+}(\mathbb{T})$. As $u$ is a weak solution of \eqref{E7.1}, then $\mf{a}(u,v)=\int_{\mathbb{T}}fv+\mu\int_{\mathbb{T}}uv$, for all $v\in H^{s}_{+}(\mathbb{T})$ and hence, we deduce by identity \eqref{E9} that
	\begin{equation*}
		\mf{b}(Q T^{-1} u, v) = \int_{\mathbb{T}}(f+\mu u) v, \quad v\in H^{s}_{+}(\mathbb{T}).
	\end{equation*}
	Expanding the functions $w := Q T^{-1}u, v$ and $f$ in Fourier series, we deduce that
	\begin{align*}
		\sum_{n\in\Z} (|n|^{2s}\mf{m}(n)+\varepsilon) \widehat{w}(n) \widehat{v}(n) = \sum_{n\in\Z}(\widehat{f}(n)+\mu\widehat{u}(n)) \widehat{v}(n).
	\end{align*}
	Hence, necessarily
	$$\widehat{w}(n)=\frac{\widehat{f}(n)+\mu \widehat{u}(n)}{|n|^{2s}\mf{m}(n)+\varepsilon}, \quad n\in\Z.$$
	As $f, u\in L^{2}_{+}(\mathbb{T})$, this implies that $QT^{-1}u\in H^{2s}_{+}(\mathbb{T})$. Indeed,
	\begin{align*}
	\|QT^{-1}u\|_{\dot{H}^{2s}}^{2} & =\sum_{n\in \Z} |n|^{4s} |\widehat{w}(n)|^{2} = \sum_{n\in\Z} |n|^{4s} \frac{|\widehat{f}(n)+\mu \widehat{u}(n)|^{2}}{(|n|^{2s}\mf{m}(n)+\varepsilon)^{2}} \\
	&  \leq  C \sum_{n\in\Z}|\widehat{f}(n)+\mu \widehat{u}(n)|^{2} = \frac{C}{2\pi}  \|f+\mu u\|^{2}_{L^{2}}<+\infty.
	\end{align*}
	Finally, as $QT^{-1}: H^{2s}_{+}(\mathbb{T}) \to H^{2s}_{+}(\mathbb{T})$ is an isomorphism, we conclude that $u\in H^{2s}_{+}(\mathbb{T})$ and 
	$$\|u\|_{H^{2s}}= \|TQ^{-1}QT^{-1} u \|_{H^{2s}}\leq C_{1}\|QT^{-1}u\|_{H^{2s}}\leq C_{2} \|f+\mu u\|_{L^{2}}\leq C_{3} (\|f\|_{L^{2}}+\|u\|_{L^{2}}),$$
	for some positive constants $C_{1}, C_{2}, C_{3}>0$. This concludes the proof.
\end{proof}

The next result is of capital importance in order to apply bifurcation theory for Fredholm operators. It states that the linearization of $\mf{F}$ with respect to the variable $u$ at every point is a Fredholm operator of index zero.

\begin{proposition}
	\label{Eqq}
	For each $(\l,u)\in\R\times H_{+}^{2s}(\mathbb{T})$, the linear operator
	\begin{equation}
		\label{E7}
		\partial_{u}\mf{F}(\l,u): H_{+}^{2s}(\mathbb{T}) \longrightarrow L_{+}^{2}(\mathbb{T}), \quad \partial_{u}\mf{F}(\l,u)[v]=\mc{L}v-\l v - pu |u|^{p-2} v,
	\end{equation}
	is Fredholm of index zero. Symbolically, $\partial_{u}\mf{F}(\l,u)\in \Phi_{0}(H_{+}^{2s}(\mathbb{T}), L^{2}_{+}(\mathbb{T}))$.
\end{proposition}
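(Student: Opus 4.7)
The plan is to realize $\partial_u \mf{F}(\lambda, u)$ as the sum of a topological isomorphism and a compact operator. Fix any constant $c_0 > 0$ and decompose
$$\partial_u \mf{F}(\lambda, u) v = A v - K v, \quad A v := \mc{L} v + c_0 v, \quad K v := \bigl(\lambda + c_0 + p\, u\, |u|^{p-2}\bigr) v.$$
Since $\mc{L}$ preserves even functions (as already verified in the proof that $\mf{F}(\lambda, u) \in L^{2}_{+}(\mathbb{T})$) and multiplication by an even function preserves parity, both $A$ and $K$ send $H^{2s}_{+}(\mathbb{T})$ into $L^{2}_{+}(\mathbb{T})$.

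First I would show that $A \colon H^{2s}_{+}(\mathbb{T}) \to L^{2}_{+}(\mathbb{T})$ is a topological isomorphism. Its Fourier symbol $\sigma(n) := |n|^{2s}\mf{m}(n) + c_0$ is even in $n$ by (M1) and, by (M3), satisfies $\sigma(n) \geq c_0 > 0$ for every $n\in \Z$ as well as $\sigma(n) \geq m_0 |n|^{2s}$ for $n \neq 0$. Hence the explicit formula $A^{-1} f := \sum_{n\in\Z} \widehat{f}(n)\,\sigma(n)^{-1}\, e^{inx}$ defines a bounded right (and left) inverse; splitting the series at $n=0$ and using the two lower bounds on $\sigma$ one obtains $\|A^{-1} f\|_{H^{2s}} \leq C \|f\|_{L^{2}}$ for some $C > 0$ independent of $f$, and evenness is preserved because $\sigma(-n)=\sigma(n)$. (Alternatively this is immediate from Proposition \ref{Reg} applied with $c \equiv c_0$.)

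Next I would show that $K \colon H^{2s}_{+}(\mathbb{T}) \to L^{2}_{+}(\mathbb{T})$ is compact. Since $s \geq \tfrac{1}{2}$, the Sobolev embedding $H^{2s}(\mathbb{T}) \hookrightarrow \mc{C}(\mathbb{T})$ gives $u \in \mc{C}_{+}(\mathbb{T})$, so the coefficient $g(x) := \lambda + c_0 + p\, u(x)\, |u(x)|^{p-2}$ belongs to $L^{\infty}_{+}(\mathbb{T})$. Thus multiplication by $g$ is bounded $L^{2}_{+}(\mathbb{T}) \to L^{2}_{+}(\mathbb{T})$, and since the Rellich embedding $H^{2s}_{+}(\mathbb{T}) \hookrightarrow L^{2}_{+}(\mathbb{T})$ is compact for any $s > 0$, the composition $K = M_g \circ \iota$ is compact.

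Combining the two steps, $\partial_u \mf{F}(\lambda, u) = A - K$ is a compact perturbation of a topological isomorphism between Banach spaces, hence Fredholm of index zero. No serious obstacle arises: the only care needed is the bookkeeping of parity and the verification that the symbol $\sigma$ is bounded away from zero, both of which are built directly into the hypotheses (M1)--(M3).
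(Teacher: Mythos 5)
Your proof is correct, and it takes a genuinely different (and somewhat cleaner) route than the paper's. The paper absorbs the entire variable-coefficient zero-order term into the isomorphism: it chooses $\mu > p\|u\|_{L^\infty}^{p-1}+\lambda$, shows $\partial_u\mf{F}(\lambda,u)+\mu J$ is an isomorphism via Lax--Milgram together with the elliptic regularity estimate of Proposition \ref{Reg}, and then writes $\partial_u\mf{F}(\lambda,u)$ as this isomorphism minus the compact map $\mu J$. You instead keep the isomorphism constant-coefficient, $A=\mc{L}+c_0$, inverting it explicitly on the Fourier side (no Lax--Milgram, no invocation of Proposition \ref{Reg}), and push the full zero-order term $g(x)=\lambda+c_0+pu|u|^{p-2}$ into the compact piece $K=M_g\circ\iota$, which is compact because $g\in L^\infty_+$ by $H^{2s}\hookrightarrow\mc{C}$ and $\iota\colon H^{2s}_+\hookrightarrow L^2_+$ is compact by Rellich. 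Both decompositions are legitimate; yours is more elementary and self-contained, while the paper's has the side benefit of already exhibiting the variable-coefficient shifted operator as an isomorphism, a fact it uses elsewhere. Your bookkeeping of parity (using evenness of the symbol $\sigma$ and of the coefficient $g$) is correct, and the symbol bounds $\sigma(n)\ge c_0$ and $\sigma(n)\ge m_0|n|^{2s}$ for $n\neq 0$ are exactly what is needed to get $\|A^{-1}f\|_{H^{2s}}\le C\|f\|_{L^2}$. One minor remark in your favor: you write the zero-order coefficient as $pu|u|^{p-2}$, which is the correct expression; the paper's own proof slips and writes $pu|u|^{p-1}$ in a couple of places.
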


\begin{proof}
	Fix $(\l,u)\in \R\times H_{+}^{2s}(\mathbb{T})$ and take
	$$
	\mu>p\|u\|_{L^{\infty}}^{p-1}+\l.
	$$
	Let us consider the bilinear form $\mf{a}:H_{+}^{s}(\mathbb{T})\times H_{+}^{s}(\mathbb{T}) \to \R$ defined by
	\begin{equation*}
		\mf{a}(v_{1},v_{2}):=2\pi\sum_{n\in\Z}|n|^{2s}\mf{m}(n)\widehat{v}_{1}(n)\widehat{v}_{2}(n) - (\l-\mu)\int_{\mathbb{T}}v_{1}v_{2} - p\int_{\mathbb{T}} u|u|^{p-2}v_{1}v_{2}.
	\end{equation*}
	The continuity of $\mf{a}$ is easily proven as
	\begin{align*}
		|\mf{a}(v_{1},v_{2})| & \leq 2\pi m_{1}\|v_{1}\|_{\dot{H}^{s}}\|v_{2}\|_{\dot{H}^{s}}+|\l-\mu|\|v_{1}\|_{L^{2}}\|v_{2}\|_{L^{2}}+p \|u\|_{L^{\infty}}^{p-1}\|v_{1}\|_{L^{2}}\|v_{2}\|_{L^{2}} \\
		& \leq C\|v_{1}\|_{H^{s}}\|v_{2}\|_{H^{s}}.
	\end{align*}
	On the other hand, the bilinear form $\mf{a}$ is coercive. Indeed,
	\begin{align*}
		\mf{a}(v,v) & =2\pi\sum_{n\in\Z}|n|^{2s}\mf{m}(n)|\widehat{v}(n)|^{2} -(\l-\mu)\int_{\mathbb{T}}v^{2} - p \int_{\mathbb{T}}  u |u|^{p-2} v^{2} \\
		& \geq 2\pi m_{0} \|v\|_{\dot{H}^{s}}^{2}+(\mu-\l-p\|u\|_{L^{\infty}}^{p-1})\|v\|_{L^{2}}^{2} \geq C \|v\|^{2}_{H^{s}}.
	\end{align*}
	The application of the Lax--Milgram theorem to the bilinear form $\mf{a}$ implies that for each $f\in L^{2}_{+}(\mathbb{T})$, there exists a unique $v\in H^{s}_{+}(\mathbb{T})$ such that
	\begin{equation*}
		2\pi\sum_{n\in\Z}|n|^{2s}\mf{m}(n)\widehat{v}(n)\widehat{w}(n) - (\l-\mu)\int_{\mathbb{T}}v w - p\int_{\mathbb{T}} u |u|^{p-1} v w = \int_{\mathbb{T}}fw,
	\end{equation*}
	for all $w\in H^{s}_{+}(\mathbb{T})$. Hence, $v$ is the unique weak solution of the equation
	\begin{equation}
		\label{E8}
		\mc{L}v+(\mu-\l- p u |u|^{p-1}) v = f.
	\end{equation}
	By the regularity result stated in Proposition \ref{Reg}, we infer that $v\in H^{2s}_{+}(\mathbb{T})$. This proves that for each $(\l,u)\in \R\times H^{2s}_{+}(\mathbb{T})$,  the operator 
	\begin{equation}
		\partial_{u}\mf{F}(\l,u)+\mu J: H_{+}^{2s}(\mathbb{T}) \longrightarrow L_{+}^{2}(\mathbb{T}), \quad v\mapsto \mc{L}v-(\l-\mu) v- p u |u|^{p-1} v,
	\end{equation}
	is an isomorphism, where $J: H^{2s}_{+}(\mathbb{T})\hookrightarrow L^{2}_{+}(\mathbb{T})$ is the canonical embedding. Therefore, we can rewrite
	$$
	\partial_{u}\mf{F}(\l,u)= (\partial_{u}\mf{F}(\l,u)+\mu J)-\mu J.
	$$
	As the embedding $J$ is compact, $\partial_{u}\mf{F}(\l,u)$ is the sum of an invertible and a compact operator. Therefore, by \cite[Chap. XV, Th. 4.1]{GGS}, the operator $\partial_{u}\mf{F}(\l,u)$ is Fredholm of index zero.
\end{proof}

The final result of this section establishes that $\mf{F}$ is proper on closed and bounded subsets. This property is important in order to recover some compactness needed for the application of the global bifurcation Theorem \ref{TGB}. Recall that a map $f:X\to Y$ between two topological spaces $X, Y$ is \textit{proper} if the preimage of every compact set in $Y$ is compact in $X$.

\begin{proposition}
	\label{LF4}
	The operator $\mf{F}$ is proper on closed and bounded subsets of $\R\times H^{2s}_{+}(\mathbb{T})$.
\end{proposition}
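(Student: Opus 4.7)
The plan is to verify properness sequentially: take a sequence $\{(\l_n,u_n)\}$ in $\mf{F}^{-1}(K)\cap C$, where $C\subset\R\times H^{2s}_{+}(\mathbb{T})$ is closed and bounded and $K\subset L^{2}_{+}(\mathbb{T})$ is compact, and extract an $H^{2s}$-convergent subsequence. Since $\{\l_n\}$ is bounded in $\R$ and $\{\mf{F}(\l_n,u_n)\}$ lives in the compact set $K$, after passing to a subsequence we may assume $\l_n\to\l_0$ and $\mf{F}(\l_n,u_n)\to f$ in $L^{2}_{+}(\mathbb{T})$.

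Next, since $\{u_n\}$ is bounded in $H^{2s}_{+}(\mathbb{T})$ and $2s\geq 1>1/2$, the Rellich-type compact embedding $H^{2s}_{+}(\mathbb{T})\hookrightarrow \mc{C}_{+}(\mathbb{T})$ yields a further subsequence with $u_n\to u_0$ uniformly. The map $t\mapsto |t|^{p}$ is locally Lipschitz on $\R$, so $|u_n|^{p}\to |u_0|^{p}$ in $L^{\infty}$, hence in $L^{2}$; likewise $\l_n u_n\to\l_0 u_0$ in $L^{2}$. Combining these,
\begin{equation*}
\mc{L}u_n = \mf{F}(\l_n,u_n)+\l_n u_n + |u_n|^{p} \longrightarrow f+\l_0 u_0+|u_0|^{p}=:g \quad \text{in }L^{2}_{+}(\mathbb{T}).
\end{equation*}

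The main obstacle is that $\mc{L}$ annihilates constants (since $|0|^{2s}\mf{m}(0)=0$), so it is not globally invertible. I would resolve this by splitting $u_n=a_n+v_n$ with $a_n:=\tfrac{1}{2\pi}\int_{\mathbb{T}}u_n$ and $\widehat{v_n}(0)=0$. Uniform convergence gives $a_n\to a_0$ and $v_n\to v_0:=u_0-a_0$ in $\mc{C}_{+}(\mathbb{T})$. Since $\mc{L}a_n=0$, we have $\mc{L}v_n=\mc{L}u_n\to g$ in $L^{2}$. On the zero-mean subspace, hypothesis (\textbf{M3}) and Plancherel give
\begin{equation*}
\|\mc{L}w\|_{L^{2}}^{2}=2\pi\sum_{n\neq 0}|n|^{4s}\mf{m}(n)^{2}|\widehat{w}(n)|^{2}\geq m_{0}^{2}\|w\|_{\dot{H}^{2s}}^{2},
\end{equation*}
and because $|n|\geq 1$ for $n\neq 0$, zero-mean $w$ satisfies $\|w\|_{L^{2}}\leq \|w\|_{\dot{H}^{2s}}$, so $\|w\|_{H^{2s}}\leq \tfrac{\sqrt{2}}{m_{0}}\|\mc{L}w\|_{L^{2}}$. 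Applying this to $w=v_n-v_m$ shows $\{v_n\}$ is Cauchy in $H^{2s}_{+}(\mathbb{T})$, hence convergent; its limit must agree with $v_0$.

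Therefore $u_n=a_n+v_n\to a_0+v_0=u_0$ in $H^{2s}_{+}(\mathbb{T})$. Since $C$ is closed, $(\l_0,u_0)\in C$, and by continuity of $\mf{F}$ established in Lemma \ref{L1}, $\mf{F}(\l_0,u_0)=\lim\mf{F}(\l_n,u_n)\in K$, so $(\l_0,u_0)\in \mf{F}^{-1}(K)\cap C$. This proves $\mf{F}^{-1}(K)\cap C$ is sequentially compact, which in the metric-space setting is equivalent to compactness, concluding the proof.
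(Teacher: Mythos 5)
Your proof is correct, and it takes a genuinely different route from the paper's. The paper reduces to verifying Berger's characterization of properness (closedness of the image plus compactness of fibers) and obtains the crucial $H^{2s}$-convergence by invoking Proposition~\ref{Reg}, the Lax--Milgram-based regularity estimate $\|w\|_{H^{2s}}\leq C(\|f\|_{L^2}+\|w\|_{L^2})$ for weak solutions of $\mc{L}w+c(x)w=f$, applied to the difference $u_{n_k}-u_0$. You instead verify sequential compactness directly and, after splitting off the mean, exploit the fact that $\mc{L}$ is diagonal in the Fourier basis: on the zero-mean subspace the symbol $|n|^{2s}\mf{m}(n)$ is bounded below by $m_0$, giving the explicit coercive bound $\|w\|_{H^{2s}}\lesssim\|\mc{L}w\|_{L^2}$ with no need for any auxiliary regularity lemma. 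This is more elementary and self-contained in the present setting; the paper's detour through Proposition~\ref{Reg} is somewhat heavier but is the more robust tool (it handles variable potentials $c(x)$ and is needed elsewhere in the paper anyway). Two cosmetic remarks: with the paper's Parseval convention $\|w\|_{L^2}^2=2\pi\sum|\widehat w(n)|^2$, the inequality $\|w\|_{L^2}\leq\|w\|_{\dot H^{2s}}$ should carry a $\sqrt{2\pi}$, so the constant $\sqrt{2}/m_0$ is not exact, but the argument is unaffected; and when identifying the $H^{2s}$-limit of $v_n$ with $v_0$ it is worth saying explicitly that both coincide with the $L^2$-limit.
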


\begin{proof}
	It suffices to prove that the restriction of $\mf{F}$ to the closed subset
	${K}:=[\lambda_{-},\lambda_{+}]\times \bar {B}_{R}$ is proper, where $\lambda_{-}<\lambda_{+}$ and $B_{R}$ stands for the open ball of $H^{2s}_{+}(\mathbb{T})$ of radius $R>0$ centered at $0$. According to
	\cite[Th. 2.7.1]{B}, we must check that $\mf{F}(K)$ is closed in $L^{2}_{+}(\mathbb{T})$, and that, for every $\phi\in L^{2}_{+}(\mathbb{T})$, the set $\mf{F}^{-1}(\phi)\cap {K}$ is compact in $\mathbb{R}\times H^{2s}_{+}(\mathbb{T})$.
	\par
	To show that $\mf{F}(K)$ is closed in $L^{2}_{+}(\mathbb{T})$, let $\{\phi_{n}\}_{n\in\mathbb{N}}$ be a sequence in $\mf{F}(K)\subset L^{2}_{+}(\mathbb{T})$ such that
	\begin{equation}
		\label{7.2}
		\lim_{n\to+\infty} \phi_{n}=\phi \quad \text{in } L^{2}_{+}(\mathbb{T}).
	\end{equation}
	Then, there exists a sequence $\{(\l_{n},u_{n})\}_{n\in\mathbb{N}}$ in $K$ such that \begin{equation}
		\label{7.3}
		\phi_{n}=\mf{F}(\l_{n},u_{n}) \quad \hbox{for all} \;\; n\in\mathbb{N}.
	\end{equation}
	By the compactness of the embeddings $H^{2s}_{+}(\mathbb{T})\hookrightarrow H^{s}_{+}(\mathbb{T})$ and $H^{2s}_{+}(\mathbb{T})\hookrightarrow\mc{C}(\mathbb{T})$ (note that $s\geq \tfrac{1}{2})$, we can extract a subsequence $\{(\l_{n_{k}},u_{n_{k}})\}_{k\in\mathbb{N}}$ such that, for some $(\l_{0},u_{0})\in [\l_-,\l_+]\times \mc{C}(\mathbb{T})$,  it holds $\lim_{k\to +\infty} \l_{n_{k}} = \l_{0}$ and
	\begin{equation}
		\label{7.4}
		\lim_{k\to+\infty} u_{n_{k}} =u_{0} \quad \text{in } \mc{C}(\mathbb{T}) \text{ and in }  H^{s}_{+}(\mathbb{T}).
	\end{equation}
	By \eqref{7.3}, we have that $u_{n_k}$ is a weak solution of $\mc{L}u-\l_{n_k} u-|u|^{p}=\phi_{n_k}$. That is, 
	$$
	2\pi\sum_{n\in\Z}|n|^{2s}\mf{m}(n)\widehat{u}_{n_k}(n)\widehat{\varphi}(n) -\l_{n_k}\int_{\mathbb{T}}u_{n_k}\varphi -\int_{\mathbb{T}}|u_{n_k}|^{p}\varphi=\int_{\mathbb{T}} \phi_{n_k} \varphi, \quad \text{for all} \;\; \varphi\in\mc{C}^{\infty}_{+}(\mathbb{T}). 
	$$
	Then, by \eqref{7.4}, taking $k\to+\infty$ we obtain
	$$
	2\pi\sum_{n\in\Z}|n|^{2s}\mf{m}(n)\widehat{u}_{0}(n)\widehat{\varphi}(n) -\l_0\int_{\mathbb{T}}u_0\varphi -\int_{\mathbb{T}}|u_0|^{p}\varphi=\int_{\mathbb{T}} \phi \varphi, \quad \text{for all} \;\; \varphi\in\mc{C}^{\infty}_{+}(\mathbb{T}). 
	$$
	Therefore $u_{0}$ must be a weak solution of 
	\begin{equation}
		\label{7.5}
			\mc{L}u_{0}-\l_{0} u_{0} - |u_{0}|^{p}=\phi, \quad \quad x\in \mathbb{T}.
	\end{equation}
	By the regularity result stated in Proposition \ref{Reg}, we deduce that $u_{0}\in H^{2s}_{+}(\mathbb{T})$, $\phi=\mf{F}(\l_{0},u_{0})$ and
	$\|u_{0}\|_{H^{2s}}\leq C\|\phi\|_{L^{2}}$ for some positive constant $C>0$. Therefore,
	\begin{equation}
		\mc{L}(u_{n_{k}}-u_{0})-(\l_{n_{k}}-\l_{0})u_{n_{k}}-\l_{0} (u_{n_{k}}-u_{0}) - (|u_{n_{k}}|^{p}-|u_{0}|^{p})=\phi_{n_{k}}-\phi, \quad x\in \mathbb{T}.
	\end{equation}
	By the regularity result stated in Proposition \ref{Reg} and the boundedness of $K$, we have the estimate
	\begin{align*}
		\|u_{n_{k}}-u_{0}\|_{H^{2s}}\leq C_1 \left(|\l_{n_{k}}-\l_{0}| + \|\phi_{n_{k}}-\phi\|_{L^{2}}\right), \quad k\in\N,
	\end{align*}
	for some positive constant $C_1>0$.
	Therefore, as $K$ is closed, we infer $(\l_{0},u_{0})\in K$. This proves that $\phi\in \mf{F}(K)$.
	\par
	Now, pick $\phi\in L^{2}_{+}(\mathbb{T})$. To show that $\mf{F}^{-1}(\phi)\cap K$ is compact in $[\l_-,\l_+]\times H^{2s}_{+}(\mathbb{T})$, let $\{(\l_{n},u_{n})\}_{n\in\mathbb{N}}$ be a sequence in $\mf{F}^{-1}(\phi)\cap K$. Then,
	\begin{equation}
		\label{7.6}
		\mf{F}(\l_{n},u_{n})=\phi \quad \hbox{for all}\;\; n\in\mathbb{N}.
	\end{equation}
	Based again on the compactness of the imbedding $H^{2s}_{+}(\mathbb{T}) \hookrightarrow \mc{C}(\mathbb{T})$, we can extract a subsequence $\{(\l_{n_{k}},u_{n_{k}})\}_{k\in\mathbb{N}}$ such that, for some $(\l_{0},u_{0})\in [\l_-,\l_+]\times \mc{C}(\mathbb{T})$,   $\lim_{k\to \infty} \l_{n_{k}} = \l_{0}$ and \eqref{7.4} holds.
	Similarly,  $u_{0}\in \mc{C}(\mathbb{T})$ is a weak solution of \eqref{7.5} and, by regularity,
	$u_{0}\in H^{2s}_{+}(\mathbb{T})$ and $\mf{F}(\l_{0},u_{0})=\phi$. In particular, for every $k\in\mathbb{N}$,
	\begin{equation*}
		\mc{L}(u_{n_{k}}-u_{0})-(\l_{n_{k}}-\l_{0})u_{n_{k}}-\l_{0} (u_{n_{k}}-u_{0}) -  (|u_{n_{k}}|^{p}-|u_{0}|^{p})=0, \quad x\in \mathbb{T}.
	\end{equation*}
	By the regularity result of Proposition \eqref{Reg}, we have the estimate
	\begin{equation*}
		\|u_{n_{k}}-u_{0}\|_{H^{2s}}\leq C  |\l_{n_{k}}-\l_{0}|, \quad k\in\N,
	\end{equation*}
	for some positive constant $C>0$.  Therefore, letting $k\to \infty$ we finally get
	that
	$$
	\lim_{k\to \infty} (\l_{n_{k}},u_{n_{k}}) = (\l_{0},u_{0})\quad \hbox{in}\;\;
	[\l_-,\l_+] \times H^{2s}_{+}(\mathbb{T}).
	$$
	This concludes the proof.
\end{proof}

\section{Spectral theory for the linearization}\label{S3}

In this section, we proceed with the spectral theoretic study of the linearization of the operator $\mf{F}$ on the \textit{trivial branch} $\mc{T}$ defined by 
$$
\mc{T}:=\{(\l,u)\in\R\times H^{2s}_{+}(\mathbb{T}) \; : \; u=0 \}.
$$
It is called trivial branch since $\mf{F}(\l,u)=0$ for all $(\l,u)\in\mc{T}$. The linearization of $\mf{F}$ on $\mc{T}$ is given by the family of operators $\mf{L}(\l):= \partial_{u}\mf{F}(\l,0)$, $\l\in\R$, given explicitly by
$$
\mf{L}:\R\longrightarrow\Phi_{0}(H^{2s}_{+}(\mathbb{T}),L^{2}_{+}(\mathbb{T})), 
\quad \mf{L}(\l)[v] = \mc{L}v - \l v.
$$
To analyze the spectral properties of the family $\mf{L}$, we use the language of nonlinear spectral theory collected in Appendix \ref{A1}.
Let us recall that the \textit{generalized spectrum} of $\mf{L}$ is given by
$$\Sigma(\mf{L}):=\{\l\in \R \, : \, \mf{L}(\l)\notin GL\left(H^{2s}_{+}(\mathbb{T}),L^{2}_{+}(\mathbb{T})\right)\}.$$
The next lemma describes the set $\Sigma(\mf{L})$.
\begin{proposition}
	\label{Pr3.2}
	The generalized spectrum of the family $\mf{L}(\l)$ is given by
	\begin{equation}
		\label{L3.1}
		\Sigma(\mf{L})=\left\{k^{2s} \mf{m}(k) \, : \, k\in\N\right\}\cup\{0\}.
	\end{equation}
	 Moreover, they are ordered as
	 \begin{equation*}
	 	0<\mf{m}(1) < 2^{2s}\mf{m}(2) < \cdots < k^{2s}\mf{m}(k) < \cdots,
	 \end{equation*}
	 and it holds that
		\begin{align*}
			N[\mf{L}(0)]=\spann\{1\}, \quad N[\mf{L}(k^{2s}\mf{m}(k))]=\spann\left\{\cos(kx)\right\}, \quad k\in \N.
		\end{align*}
\end{proposition}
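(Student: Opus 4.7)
The plan is to diagonalize $\mf{L}(\l)$ with respect to the Fourier basis, which is possible because $\mc{L}$ is itself a Fourier multiplier. For $u = \sum_{n\in\Z}\widehat{u}(n)e^{inx} \in H^{2s}_{+}(\mathbb{T})$, one immediately has
$$\mf{L}(\l)u = \sum_{n\in\Z}\bigl(|n|^{2s}\mf{m}(n) - \l\bigr)\widehat{u}(n)e^{inx},$$
so that by the orthogonality of the trigonometric system in $L^{2}(\mathbb{T})$, the identity $\mf{L}(\l)u = 0$ is equivalent to the mode-wise conditions $(|n|^{2s}\mf{m}(n) - \l)\widehat{u}(n) = 0$ for every $n \in \Z$. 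This converts the entire spectral analysis into a pointwise computation on the Fourier side.

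From here the argument splits into three cases according to where $\l$ sits relative to the candidate set $\Lambda := \{0\}\cup\{k^{2s}\mf{m}(k) : k\in\N\}$. For $\l = 0$, only the zeroth mode is not forced to vanish, yielding $N[\mf{L}(0)] = \spann\{1\}$. For $\l = k^{2s}\mf{m}(k)$ with $k\geq 1$, I first need to know that $n\mapsto n^{2s}\mf{m}(n)$ is strictly increasing on $\N$, so that $n=k$ is the only positive solution of $n^{2s}\mf{m}(n) = k^{2s}\mf{m}(k)$; this is where hypotheses (M2) and (M3) conspire, since $n^{2s}$ is strictly increasing and $\mf{m}(n) > 0$. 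Combining this with the evenness--reality constraints $\widehat{u}(-n)=\widehat{u}(n)\in\R$, the surviving modes $n = \pm k$ recombine as $2\widehat{u}(k)\cos(kx)$, producing $N[\mf{L}(k^{2s}\mf{m}(k))] = \spann\{\cos(kx)\}$. For $\l\notin\Lambda$, every Fourier coefficient must vanish and hence $N[\mf{L}(\l)] = \{0\}$.

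To close the spectrum computation, I invoke Proposition \ref{Eqq} at $u=0$, which gives $\mf{L}(\l)\in\Phi_{0}(H^{2s}_{+}(\mathbb{T}), L^{2}_{+}(\mathbb{T}))$ for every $\l\in\R$. Thus, whenever $\l\notin\Lambda$, the trivial kernel forces $R[\mf{L}(\l)] = L^{2}_{+}(\mathbb{T})$ by index zero, and the open mapping theorem upgrades this to $\mf{L}(\l)\in GL$. This identifies $\Sigma(\mf{L})$ exactly with $\Lambda$, while the strict ordering merely restates the strict monotonicity established above.

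There is no genuine obstacle in this argument; the whole proof is essentially bookkeeping once the diagonalization is observed. The only subtlety worth flagging is that (M2) only asserts non-strict monotonicity of $\mf{m}$, so the \emph{strict} ordering of the eigenvalues (and the one-dimensionality of each nontrivial kernel) would fail without the strict positivity supplied by (M3); combined with the strict monotonicity of $k\mapsto k^{2s}$ on $\N$, this potential defect is overcome.
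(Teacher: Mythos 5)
Your proposal is correct and follows essentially the same route as the paper: diagonalize $\mf{L}(\l)$ in the Fourier basis and read off the kernel mode by mode, with strict monotonicity of $k\mapsto k^{2s}\mf{m}(k)$ ensuring each nontrivial kernel is one-dimensional. You are a touch more explicit than the paper in two spots that it leaves implicit, namely invoking Proposition \ref{Eqq} (Fredholm of index zero) to upgrade $N[\mf{L}(\l)]=\{0\}$ to $\mf{L}(\l)\in GL$, and observing that (M3) is needed alongside (M2) to get \emph{strict} ordering of the generalized eigenvalues; both are accurate and welcome clarifications rather than a different argument.
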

\begin{proof}
	Let $\l\in\R$ and take $v\in N[\mf{L}(\l)]$. Then, $\mf{L}(\l)[v]=0$ or equivalently $\mc{L}v-\l v=0$. In the Fourier side this becomes
	\begin{equation}
		\label{E2}
		\sum_{n\in \Z}(|n|^{2s} \mf{m}(n)-\l)\widehat{u}(n) e^{i n x}=0.
	\end{equation}
	Hence, if $\l\neq0$ or $\l\neq k^{2s} \mf{m}(k)$ for each $k\in\N$, necessarily $\widehat{u}(n)=0$ for each $n\in\Z$ and this implies that $u\equiv0$. Hence $N[\mf{L}(\l)]=\{0\}$.
	On the one hand, suppose that $\l=0$. Then, equation \eqref{E2} implies that 
	$$\widehat{u}(n)=0, \quad n\neq 0.$$
	Hence $u\equiv \widehat{u}(0)$ and $u$ is constant. Consequently, $N[\mf{L}(0)]=\spann\{1\}$ and $0\in\Sigma(\mf{L})$. Finally, if $\l= k^{2s} \mf{m}(k)$ for some $k\neq 0$, then equation \eqref{E2} implies that 
	$$\widehat{u}(n)=0, \quad n\neq \pm k.$$
	Hence $u(x) =\widehat{u}(-k)e^{-ikx}+\widehat{u}(k)e^{i k x}=2\widehat{u}(k)\cos(kx)$ and therefore 
	$$
	N[\mf{L}(k^{2s}\mf{m}(k))]=\spann\{\cos(kx)\}.
	$$ 
	Then $k^{2s} \mf{m}(k)\in \Sigma(\mf{L})$ for each $k\in\N$. Finally, the order of the eigenvalues follows from hypothesis (M2) on the multiplier $\mf{m}$. This concludes the proof.
\end{proof}

The next result proves that the operator $\mf{L}(\l)$ acting in $L^{2}_{+}(\mathbb{T})$ is self-adjoint.

\begin{lemma}
	For each $\l\in\R$, $\mf{L}(\l): H^{2s}_{+}(\mathbb{T})\subset L^{2}_{+}(\mathbb{T}) \to L^{2}_{+}(\mathbb{T})$ is a self-adjoint operator.
\end{lemma}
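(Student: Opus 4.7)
The plan is to reduce the statement to the self-adjointness of $\mc{L}$ itself. Since $\mf{L}(\l)=\mc{L}-\l J$, where $J:H^{2s}_+(\mathbb{T})\hookrightarrow L^2_+(\mathbb{T})$ is the compact embedding and $\l J$ is (the restriction of) a bounded symmetric multiplication operator on $L^2_+(\mathbb{T})$, the theorem of Kato--Rellich (perturbation of a self-adjoint operator by a bounded symmetric operator) immediately gives the self-adjointness of $\mf{L}(\l)$ on the domain $H^{2s}_+(\mathbb{T})$ once we know $\mc{L}$ is self-adjoint on that same domain. So the task boils down to proving that $\mc{L}:H^{2s}_+(\mathbb{T})\subset L^2_+(\mathbb{T})\to L^2_+(\mathbb{T})$ is self-adjoint.

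The first step is symmetry. For $u,v\in H^{2s}_+(\mathbb{T})$, I would expand in Fourier series and use Parseval to compute
\begin{equation*}
(\mc{L}u,v)_{L^2}=2\pi\sum_{n\in\Z}|n|^{2s}\mf{m}(n)\widehat{u}(n)\overline{\widehat{v}(n)}.
\end{equation*}
By hypothesis (\textbf{M1}) the symbol $|n|^{2s}\mf{m}(n)$ is even in $n$, and since $u,v$ are real even functions their Fourier coefficients are real and symmetric; hence the above expression is symmetric in $(u,v)$, yielding $(\mc{L}u,v)_{L^2}=(u,\mc{L}v)_{L^2}$. In particular $\mc{L}\subset \mc{L}^{*}$ and it only remains to show the reverse inclusion of domains.

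The second and decisive step is to prove $D(\mc{L}^{*})\subset H^{2s}_+(\mathbb{T})$. I would take $v\in D(\mc{L}^{*})$ with $\mc{L}^{*}v=w\in L^2_+(\mathbb{T})$ and test the identity $(\mc{L}u,v)_{L^2}=(u,w)_{L^2}$ against the family of even test functions $u_k(x)=\cos(kx)$, $k\in\N$, together with the constant function $u_0\equiv 1$. Computing both sides in Fourier gives
\begin{equation*}
k^{2s}\mf{m}(k)\,\widehat{v}(k)=\widehat{w}(k)\quad\text{for every }k\in\N,
\end{equation*}
and $\widehat{w}(0)=0$. Using the lower bound $\mf{m}(k)\geq m_0>0$ from (\textbf{M3}), we then get
\begin{equation*}
\sum_{k\in\Z}|k|^{4s}|\widehat{v}(k)|^{2}\leq \frac{1}{m_0^{2}}\sum_{k\in\Z}|\widehat{w}(k)|^{2}<+\infty,
\end{equation*}
which means $v\in H^{2s}_+(\mathbb{T})$. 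This gives $D(\mc{L}^{*})=H^{2s}_+(\mathbb{T})$, completing the self-adjointness of $\mc{L}$, and hence of $\mf{L}(\l)=\mc{L}-\l J$.

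I do not foresee a real obstacle here; the only subtle point is to work consistently inside the even subspace $L^2_+(\mathbb{T})$, noticing that the span of $\{1,\cos(kx):k\in\N\}$ is a total set in $L^2_+(\mathbb{T})$, so that testing against these functions is enough to recover all Fourier coefficients of $v$ and $w$. The lower bound (\textbf{M3}) is what makes the elliptic a priori estimate work and forces $v$ to have the full $H^{2s}$-regularity.
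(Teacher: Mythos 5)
Your proof is correct and follows the same essential route as the paper's: verify symmetry on the Fourier side and then identify the domain of the adjoint. The paper merely asserts that $\mathscr{D}(\mf{L}(\l))=\mathscr{D}(\mf{L}(\l)^{\ast})$ "is easily shown," whereas you actually carry this out — first peeling off the bounded symmetric perturbation $-\l J$ by Kato--Rellich so that only $\mc{L}$ needs treatment, and then testing against the total family $\{1,\cos(kx)\}$ and invoking the lower bound {\rm(\textbf{M3})} to force any $v\in\mathscr{D}(\mc{L}^{\ast})$ into $H^{2s}_{+}(\mathbb{T})$ — so your version is a fully explicit completion of the step the paper leaves implicit.
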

\begin{proof}
 Firstly, note that given $u, v\in H^{2s}_{+}(\mathbb{T})$,
	$$
	\int_{\mathbb{T}}\mc{L}u \; v = 2\pi \sum_{n\in\Z}|n|^{2s}\mf{m}(n)\widehat{u}(n) \widehat{v}(n)  = \int_{\mathbb{T}}\mc{L}v \; u.
	$$
 Then, using this identity, we deduce
	\begin{align*}
		(\mf{L}(\l)[u],v)_{L^{2}} & =\int_{\mathbb{T}}\mf{L}(\l)[u]v =\int_{\mathbb{T}} (\mc{L}u-\l u)v  = \int_{\mathbb{T}}\mc{L}v u - \l \int_{\mathbb{T}}uv  \\
		&= \int_{\mathbb{T}}(\mc{L}v-\l v)u =\int_{\mathbb{T}} \mf{L}(\l)[v] u =(u,\mf{L}(\l)[v])_{L^{2}}.
	\end{align*}
	Consequently $\mf{L}(\l)$ is a symmetric operator. It is easily shown that $\mathscr{D}(\mf{L}(\l))=\mathscr{D}(\mf{L}(\l)^{\ast})$. Hence, $\mf{L}(\l)$ is self-adjoint.
\end{proof}

The following result computes the algebraic multiplicity of each generalized eigenvalue of $\Sigma(\mf{L})$. For the sake of notation, subsequently, we will denote $\Sigma(\mf{L})=\{\s_{k}\}_{k=0}^{\infty}$ where  
$$\s_0=0, \; \; \s_{k}=k^{2s} \mf{m}(k),  \quad k\in\N.$$

\begin{proposition}
	\label{Pr3.3}
	For each $k\in\N\cup\{0\}$, the generalized eigenvalue $\s_k$ is $1$-transversal and its generalized algebraic multiplicity is
	$$\chi[\mf{L},\s_{k}]=1.$$
\end{proposition}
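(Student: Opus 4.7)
The plan is to verify the $1$-transversality condition from the Esquinas--López-Gómez theory of generalized algebraic multiplicity (as recalled in Appendix \ref{A1}), which at a Fredholm eigenvalue $\s_0$ of a $\mc{C}^1$ family $\mf{L}(\l)$ reads
$$
\mf{L}'(\s_0)\bigl(N[\mf{L}(\s_0)]\bigr)\oplus R[\mf{L}(\s_0)]=L^{2}_{+}(\mathbb{T}),
$$
since it is a classical consequence (see, e.g., \cite{LG}) that whenever this condition holds one automatically has $\chi[\mf{L},\s_0]=\dim N[\mf{L}(\s_0)]$. Thus both assertions of the proposition will follow from a single computation in each case.

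The first step is trivial: since $\mf{L}(\l)[v]=\mc{L}v-\l v$ depends linearly on $\l$, its derivative is
$$
\mf{L}'(\l)[v]=-v,\qquad v\in H^{2s}_{+}(\mathbb{T}).
$$
The kernels have already been identified in Proposition \ref{Pr3.2}. It remains to describe the ranges. Working on the Fourier side, a function $f\in L^{2}_{+}(\mathbb{T})$ belongs to $R[\mf{L}(\s_k)]$ if and only if the equation
$$
(|n|^{2s}\mf{m}(n)-\s_k)\widehat{v}(n)=\widehat{f}(n),\qquad n\in\Z,
$$
admits a solution in $H^{2s}_{+}(\mathbb{T})$. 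For $k=0$ this forces $\widehat{f}(0)=0$, and for $k\geq 1$ it forces $\widehat{f}(\pm k)=0$; in both cases the other Fourier coefficients of $v$ are uniquely determined and yield an $H^{2s}$-function, since the symbol $|n|^{2s}\mf{m}(n)-\s_k$ grows like $|n|^{2s}$ at infinity by hypothesis (M3). Consequently,
$$
R[\mf{L}(\s_0)]=\{f\in L^{2}_{+}(\mathbb{T}):\widehat{f}(0)=0\},\qquad R[\mf{L}(\s_k)]=\{f\in L^{2}_{+}(\mathbb{T}):\widehat{f}(k)=0\},\ k\geq 1.
$$

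Finally, I would verify the transversality identity directly. For $k=0$, $\mf{L}'(0)(N[\mf{L}(0)])=\spann\{-1\}=\spann\{1\}$, which is a one-dimensional complement of $R[\mf{L}(0)]$ in $L^{2}_{+}(\mathbb{T})$ because the constant function $1$ has non-zero mean. For $k\geq 1$, $\mf{L}'(\s_k)(N[\mf{L}(\s_k)])=\spann\{-\cos(kx)\}=\spann\{\cos(kx)\}$, which is transverse to $R[\mf{L}(\s_k)]$ since $\widehat{\cos(kx)}(k)=\tfrac{1}{2}\neq 0$. In both situations $1$-transversality holds, and therefore $\chi[\mf{L},\s_k]=\dim N[\mf{L}(\s_k)]=1$, as claimed. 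No step here looks like a real obstacle; the only mild subtlety is being careful with the even-symmetry convention so that the $\pm k$ Fourier modes are treated as a single one-dimensional subspace of $L^{2}_{+}(\mathbb{T})$.
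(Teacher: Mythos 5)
Your proof is correct, and both it and the paper's argument ultimately reduce to checking that $\mf{L}_1(\s_k)$ applied to the one-dimensional kernel is not absorbed by $R[\mf{L}(\s_k)]$, followed by the standard identity $\chi[\mf{L},\s_k]=\dim N[\mf{L}(\s_k)]$ under $1$-transversality (equation \eqref{CTCR}). The one place where your route genuinely diverges is in how the range is identified. The paper uses the preceding lemma (self-adjointness of $\mf{L}(\l)$ on $L^{2}_{+}(\mathbb{T})$) together with closedness of the range of a Fredholm operator to conclude $R[\mf{L}(\s_k)]=N[\mf{L}(\s_k)]^{\perp}$, whereas you compute $R[\mf{L}(\s_k)]$ directly on the Fourier side: you observe that the symbol $|n|^{2s}\mf{m}(n)-\s_k$ vanishes only at $n=\pm k$ (by the strict ordering from Proposition \ref{Pr3.2}), and that for the remaining modes the inverse symbol, which grows like $|n|^{-2s}$ by (M3), produces a well-defined $H^{2s}_{+}$ solution. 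Both descriptions give the same subspace $\{f:\widehat{f}(k)=0\}=\{f:\int_{\mathbb{T}}\cos(kx)f=0\}$; your version is more self-contained (it does not rely on the self-adjoint lemma and makes the $H^{2s}$-regularity of the preimage explicit), while the paper's version is slightly shorter since it reuses material already established. Either way, the transversality check $-\cos(kx)\notin R[\mf{L}(\s_k)]$ (and $-1\notin R[\mf{L}(\s_0)]$) closes the argument, and no gap remains.
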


\begin{proof}
	An elementary computation gives
	\begin{equation}
		\label{Ep}
	\mf{L}_{1}(\l):=\frac{{\rm{d}}\mf{L}}{{\rm{d}}\l}(\l)= -J, \quad \l\in\R,
	\end{equation}
	where $J: H^{2s}_{+}(\mathbb{T})\hookrightarrow L^{2}_{+}(\mathbb{T})$ is the canonical embedding. 
	As $\mf{L}(\l)$ is a Fredholm operator, $R[\mf{L}(\l)]$ is closed in $L^{2}_{+}(\mathbb{T})$. Therefore, we deduce that in $L^{2}_{+}(\mathbb{T})$,
	$$
	R[\mf{L}(\l)]=\overline{R[\mf{L}(\l)]}=N[\mf{L}^{\ast}(\l)]^{\perp}=N[\mf{L}(\l)]^{\perp},
	$$
	where we have used that $\mf{L}(\l)$ is a self-adjoint operator on $L^{2}_{+}(\mathbb{T})$. Consequently, we get that for $k=0$,
	$$
	R[\mf{L}(\s_{0})]=\left\{u\in L^{2}_{+}(\mathbb{T}) \, : \, \int_{\mathbb{T}} u=0 \right\},
	$$
	and for each $k\in\N$,
	\begin{equation}
		\label{Rr}
	R[\mf{L}(\s_{k})]=\left\{u\in L^{2}_{+}(\mathbb{T}) \, : \, \int_{\mathbb{T}}\cos(kx) u=0 \right\}.
    \end{equation}
	Hence, by \eqref{Ep}, we obtain that
	\begin{align*}
	&\mf{L}_{1}(\s_{0})\left[1\right] = -1 \notin R[\mf{L}(\s_0)], \\
	&\mf{L}_{1}(\s_{k})[\cos(kx)] = -\cos(kx)\notin R[\mf{L}(\s_{k})], \quad k\in\N.
	\end{align*}
	Therefore, the transversality condition 
	$$
	\mf{L}_{ 1}(\s_{k})\left(N[\mf{L}(\s_{k})]\right)\oplus R[\mf{L}(\s_{k})]= L^{2}_{+}(\mathbb{T}),
	$$
	holds for each $k\in\N\cup\{0\}$. This implies that the eigenvalues $\s_{k}$ are $1$-transversal and by \eqref{CTCR}, we deduce that
	$$
	\chi[\mf{L},\s_{k}]=\dim N[\mf{L}(\s_{k})]= 1
	$$
	The proof is concluded.
\end{proof}

\section{Local bifurcation}\label{S4}

In this section we study the local bifurcation of non-trivial solutions of equation \eqref{e} from the points $(\s_k,0)\in \R\times H^{2s}_{+}(\mathbb{T})$. The main ingredient will be the well-known Crandall--Rabinowitz Theorem \ref{Cr-Rb}. The spectral analysis performed in Section \ref{S3}, in particular Proposition \ref{Pr3.3}, assures us that the hypotheses of this theorem are fulfilled. 
\par 
The \textit{set of non-trivial solutions} of $\mf{F}$ is defined by
\begin{equation*}
	\mf{S}:=\left[ \mf{F}^{-1}(0)\backslash \mc{T}\right]\cup \{(\l,0):\;\l\in \Sigma(\mf{L})\}\subset \R\times H^{2s}_{+}(\mathbb{T}).
\end{equation*}
Due to the continuity of the operator $\mf{F}$, see Lemma \ref{L1}, the subset $\mf{S}$ is closed in $\R\times H^{2s}_{+}(\mathbb{T})$.
The main local result is the following.

\begin{theorem}
	\label{LCRWw}
	Let $k\in\N$. Then, the point $(k^{2s}\mf{m}(k),0)\in \R \times H^{2s}_{\mathbf{+}}(\mathbb{T})$ is a bifurcation point of the non-linearity
	$$\mf{F}:\R\times H^{2s}_{+}(\mathbb{T})\longrightarrow L^{2}_{+}(\mathbb{T}), \quad \mf{F}(\l,u):=\mc{L}u-\l u - |u|^{p},$$
	from the trivial branch $\mc{T}$ to a connected component $\mathscr{C}_{k}$ of the set of non-trivial solutions $\mf{S}$. Let 
	$$Y_{k}:=\{u\in H^{2s}_{\mathbf{+}}(\mathbb{T}) \, : \, (u,\cos(kx))_{H^{2s}}=0 \}.$$
	Then, the following statements hold:
	\vspace{2pt}
	\begin{enumerate}
		\item[{\rm{(a)}}] {\rm\textbf{Existence:}} There exist $\e>0$ and two $\mc{C}^{\o(p)-2}$-maps
		\begin{equation}
			\L_{k}: (-\e,\e) \longrightarrow \R, \quad \L_{k}(0)=k^{2s}\mf{m}(k), \qquad \Gamma_{k}: (-\e,\e)\longrightarrow Y_{k}, \quad  \Gamma_{k}(0)=0,
			\label{Cu}
		\end{equation}
		such that for each $s\in(-\e,\e)$,
		\begin{equation}
			\mathfrak{F}(\L_{k}(s),u_{k}(s))=0, \quad u_{k}(s):= s(\cos(kx)+\Gamma_{k}(s)).
			\label{Cu1}
		\end{equation}
		In other words, for some $\r>0$,
		$$\mathscr{C}_{k}\cap B_{\r}(\s_{k},0)=\{(\L_{k}(s), s(\cos(kx)+\Gamma_{k}(s))) \, : \, s\in(-\varepsilon,\varepsilon)\}.$$
		\vspace{1pt}
		\item[{\rm{(b)}}] {\rm\textbf{Uniqueness:}} There exists $\r >0$ such that if $\mathfrak{F}(\l,u)=0$ and
		$(\l,u)\in B_\r(\s_{k},0)\subset \R\times H^{2s}_{\mathbf{+}}(\mathbb{T})$, then either $u = 0$ or
		$(\l,u)=(\L_{k}(s),u_{k}(s))$ for some $s\in(-\e,\e)$. Therefore, from $(\s_{k},0)$, there emanate precisely two branches of non-trivial solutions of equation \eqref{Ee}. In other words,
		$$\mf{S}\cap B_{\rho}(\s_{k},0)=\mathscr{C}_{k}\cap B_{\rho}(\s_{k},0).$$
		Moreover, locally the solutions emanating from $(\s_{k},0)$ have exactly $2k$ zeros.
		\vspace{5pt}
		\item[{\rm{(c)}}] {\rm\textbf{Bifurcation direction:}} If $p=2$, the bifurcation direction is given by
		\begin{equation}
			\label{BF0}
			\dot{\L}_{k}(0)=0, \quad \ddot{\L}_{k}(0)=\frac{1}{ k^{2s}\mf{m}(k)}\frac{2^{2s+1}\mf{m}(2k)-3\mf{m}(k)}{2^{2s}\mf{m}(2k)-\mf{m}(k)}.
		\end{equation}
		Then $\ddot{\L}_{k}(0)>0$ and hence we are dealing with a supercritical bifurcation. This implies that for a sufficiently small $\r>0$, if $(\l,u)\in B_\r(\s_{k},0)$ and $u\neq 0$, then necessarily $\l>\s_k$.
	\end{enumerate}
	
\end{theorem}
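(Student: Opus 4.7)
For parts (a) and (b) I plan to invoke the Crandall--Rabinowitz bifurcation theorem~\ref{Cr-Rb} directly. Lemma~\ref{L1} gives the required regularity of $\mf{F}$; Proposition~\ref{Eqq} yields the Fredholm-zero property of $\mf{L}(\s_k) := \partial_u \mf{F}(\s_k, 0)$; Proposition~\ref{Pr3.2} identifies the one-dimensional kernel $N[\mf{L}(\s_k)] = \spann\{\cos(kx)\}$; and Proposition~\ref{Pr3.3} verifies the transversality $\partial_{\l u}^2 \mf{F}(\s_k, 0)[\cos(kx)] = -\cos(kx) \notin R[\mf{L}(\s_k)]$. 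Since $H^{2s}_{+}(\mathbb{T})$ is Hilbert, the closed subspace $Y_k$ is a topological complement of $N[\mf{L}(\s_k)]$, so Crandall--Rabinowitz produces the curves $\L_k, \Gamma_k$ and the local uniqueness assertion. For the zero count I will use that $u_k(s)/s \to \cos(kx)$ in $H^{2s}_{+}(\mathbb{T})$, hence uniformly by the embedding $H^{2s}(\mathbb{T}) \hookrightarrow \mc{C}(\mathbb{T})$ for $s \geq \tfrac{1}{2}$. Bootstrapping the equation to $\mc{C}^1$ regularity and applying the implicit function theorem at each of the $2k$ simple zeros of $\cos(kx)$ then produces exactly $2k$ zeros for $|s|$ small.

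For the bifurcation direction (c) I restrict to $p=2$, where $\mf{F}$ is $\mc{C}^{\infty}$ with $\partial_\l \mf{F}(\l,u) = -u$, $\partial_{\l u}^2 \mf{F}[v] = -v$, $\partial_{uu}^2 \mf{F}[v_1,v_2] = -2 v_1 v_2$, and all higher mixed derivatives vanishing. Setting $\phi := \cos(kx)$ and differentiating $\mf{F}(\L_k(s), u_k(s)) = 0$ twice at $s = 0$, then pairing with $\phi$ in $L^2_{+}(\mathbb{T})$ (using the self-adjointness of $\mf{L}(\s_k)$ to annihilate the contribution of $\mf{L}(\s_k)\ddot u(0)$), the vanishing $\int_{\mathbb{T}} \cos^3(kx)\, dx = 0$ forces $\dot \L(0) = 0$. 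With this, the $G''(0)=0$ equation reduces to $\mf{L}(\s_k)\ddot u(0) = 2\phi^2 = 1 + \cos(2kx)$, and the constraint $\ddot u(0) \in Y_k$ (equivalent in Fourier to $\widehat{\ddot u(0)}(\pm k) = 0$) pins down the unique solution
\begin{equation*}
\ddot u(0) = -\frac{1}{k^{2s}\mf{m}(k)} + \frac{\cos(2kx)}{k^{2s}(2^{2s}\mf{m}(2k) - \mf{m}(k))}.
\end{equation*}
A third differentiation of $\mf{F}(\L_k(s), u_k(s)) = 0$ at $s=0$, followed by pairing with $\phi$ in $L^2_{+}(\mathbb{T})$, yields
\begin{equation*}
\ddot \L(0) = -\frac{2(\phi^2, \ddot u(0))_{L^2}}{(\phi,\phi)_{L^2}},
\end{equation*}
which after the elementary Fourier computation with the explicit $\ddot u(0)$ above reduces to the formula stated in~\eqref{BF0}.

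The main obstacle is the bookkeeping in (c): keeping track of the many vanishing and surviving contributions in the third derivative of $\mf{F}(\L_k(s), u_k(s))$, and then carrying out the Fourier inversion to represent $\ddot u(0)$ explicitly while respecting the $H^{2s}$-orthogonality constraint. Positivity of $\ddot\L(0)$ then follows effortlessly: hypothesis (M2) yields $\mf{m}(2k) \geq \mf{m}(k) > 0$, and since $s \geq \tfrac{1}{2}$ implies $2^{2s} \geq 2$, both the denominator $2^{2s}\mf{m}(2k) - \mf{m}(k) \geq (2^{2s}-1)\mf{m}(k) \geq \mf{m}(k)$ and the numerator $2^{2s+1}\mf{m}(2k) - 3\mf{m}(k) \geq (2^{2s+1}-3)\mf{m}(k) \geq \mf{m}(k)$ are strictly positive. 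Hence $\ddot\L(0) > 0$, the bifurcation is supercritical, and a sufficiently small $\r > 0$ ensures that every nontrivial solution in $B_\r(\s_k, 0)$ satisfies $\l > \s_k$.
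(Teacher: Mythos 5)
Your proposal is correct and follows essentially the same approach as the paper: Crandall--Rabinowitz (Theorem~\ref{Cr-Rb}), backed by Lemma~\ref{L1}, Propositions~\ref{Eqq}, \ref{Pr3.2} and \ref{Pr3.3}, for parts (a)--(b), and a Lyapunov--Schmidt expansion for part (c). The only real difference is in (c): you derive $\dot\L_k(0)$ and $\ddot\L_k(0)$ by differentiating $\mf{F}(\L_k(s),u_k(s))=0$ twice and three times at $s=0$ and pairing against $\cos(kx)$, whereas the paper quotes the ready-made formulas of Theorem~\ref{BD}; your $\ddot u(0)$ is exactly the paper's auxiliary function $\phi_k$ (with the kernel component normalized to zero by the constraint $\ddot u(0)\in Y_k$, which is why your representation is unique where the paper's is only unique modulo $\cos(kx)$), and the Fourier inversion and the sign analysis via (M2) and $s\geq\tfrac12$ coincide with the paper's. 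Incidentally, you offer a sketch of the ``exactly $2k$ zeros'' claim (uniform convergence of $u_k(s)/s$ to $\cos(kx)$ plus a $\mc{C}^1$ bootstrap and simplicity of zeros), a point the paper's proof does not actually address; for $\tfrac12\le s\le\tfrac34$ the embedding $H^{2s}\hookrightarrow\mc{C}^1$ fails and the bootstrap step genuinely needs to be carried out, so if you flesh this out, make that explicit.
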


\begin{proof}
	The first part of the result, items (a) and (b), are a direct application of the Crandall--Rabinowitz Theorem \ref{Cr-Rb} applied to the nonlinearity $\mf{F}$. Let us show that $\mf{F}$ satisfies the hypothesis of Theorem \ref{Cr-Rb}. First of all, note that $\mf{F}\in\mc{C}^{\o(p)-1}$ by Lemma \ref{L1}. Hypothesis (F1) is clearly satisfied. Hypothesis (F2) is satisfied by Proposition \ref{Eqq} and finally, (F3) is a consequence of Proposition \ref{Pr3.3}. 
	\par We proceed to prove item (c).  Suppose that $p=2$. By Theorem \ref{BD}, we have that
	\begin{equation}
		\label{Id}
		\dot\Lambda_{k}(0)=-\frac{1}{2}\frac{( \partial^{2}_{uu}\mf{F}(\s_{k},0)[\varphi_{k},\varphi_{k}],\varphi_{k}^{\ast})_{L^2}}{( \partial^{2}_{\l u}\mf{F}(\s_{k},0)[\varphi_{k}],\varphi_{k}^{\ast})_{L^2}},
	\end{equation}
	where $\varphi_{k}(x)=\cos(kx)$, $x\in\mathbb{T}$, and $\varphi^{\ast}_{k}\in L^{2}_{+}(\mathbb{T})$ is a function such that
	$$
	R[\mf{L}(\s_k)]=\{f\in L^{2}_{+}(\mathbb{T}) \; : \; (f,\varphi^{\ast}_{k})_{L^{2}}=0\}.
	$$
	By identity \eqref{Rr}, we can choose $\varphi^{\ast}_{k} = \cos(kx)$. An standard computation using Lemma \ref{L1} gives
	\begin{align*}
		\partial^{2}_{uu}\mf{F}(\s_{k},0)[\varphi_{k},\varphi_{k}]=-2\cos^{2}(kx), \quad \partial^{2}_{\l u}\mathfrak{F}(\s_{k},0)[\varphi_{k}]=- \cos(kx).
	\end{align*}
	Therefore, a direct substitution on identity \eqref{Id} yields
	\begin{equation*}
		\dot{\Lambda}_{k}(0)=-\frac{1}{2}\frac{\left(- 2\cos^{2}(kx),\cos(kx)\right)_{L^{2}}}{\left( -\cos(kx),\cos(kx)\right)_{L^{2}}}= -\frac{1}{\pi}\int_{\mathbb{T}}\cos^{3}(kx)\; {\rm{d}}x = 0.
	\end{equation*}
	This proves that $\dot{\L}_{k}(0)=0$. On the other hand, by Lemma \ref{L1}, we get
	$$
	\partial^{3}_{uuu}\mf{F}(\s_{k},0)[\varphi_{k},\varphi_{k},\varphi_{k}]=0,
	$$
	therefore, by Theorem \ref{BD}, 
	\begin{align*}
		\ddot\Lambda_{k}(0)=-\frac{(\partial^{2}_{uu}\mf{F}(\s_{k},0)[\varphi_{k},\phi_{k}],\varphi_{k}^{\ast})_{L^2}}{( \partial^{2}_{\l u}\mf{F}(\s_{k},0)[\varphi_{k}],\varphi_{k}^{\ast})_{L^2}},
	\end{align*}
	where $\phi_{k}\in H^{2s}_{\mathbf{+}}(\mathbb{T})$ is any function satisfying
	$$
	\partial^{2}_{uu}\mf{F}(\s_{k},0)[\varphi_{k},\varphi_{k}]+\partial_{u}\mf{F}(\s_{k},0)[\phi_{k}]=0.
	$$
	This is equivalent to the non-homogeneous pseudo-differential equation
	\begin{equation}
		\label{Eq}
		\mc{L}\phi_{k}-\s_{k}\phi_{k}=2\cos^{2}(kx), \quad x\in\mathbb{T}.
	\end{equation}
	As we can rewrite
	$$\cos^{2}(kx) =  \frac{1}{4}e^{-2kxi} +\frac{1}{2}+\frac{1}{4}e^{2kxi},$$
	expanding the equation \eqref{Eq} in Fourier series, we obtain
	$$
	\sum_{n\in\Z}(|n|^{2s}\mf{m}(n)-|k|^{2s}\mf{m}(k))\widehat{\phi_{k}}(n)e^{inx} = \frac{1}{2}e^{-2kxi} +1+\frac{1}{2}e^{2kxi}.
	$$
	Comparing each summand, we deduce that
	\begin{align*}
		&	\widehat{\phi_{k}}(0)=-\frac{1}{k^{2s}\mf{m}(k)}, \\
		&	\widehat{\phi_{k}}(-2k)=\widehat{\phi_{k}}(2k)=\frac{1}{2}\frac{1}{(2k)^{2s}\mf{m}(2k)-k^{2s}\mf{m}(k)}, \\
		& \widehat{\phi_{k}}(n)=0, \quad n\neq -2k, -k, 0, k, 2k.
	\end{align*}
	Therefore, the function $\phi_k$ is given by
	\begin{equation*}
		\phi_{k}(x) = - \frac{1}{k^{2s}\mf{m}(k)} + 2\widehat{\phi_{k}}(k)\cos(kx) + \frac{1}{(2k)^{2s}\mf{m}(2k)-k^{2s}\mf{m}(k)}\cos(2kx).
	\end{equation*}
	This explicit representation of $\phi_{k}$ gives
	\begin{align*}
		\left(\partial^{2}_{uu}\mf{F}(\s_{k},0)[\varphi_{k},\phi_{k}], \varphi_{k}\right)_{L^{2}}  & =  \left(- 2\cos(kx)\phi_{k}(x), \cos(kx)\right)_{L^{2}} \\ 
		& = -2\int_{\mathbb{T}}\cos^{2}(kx)\phi_{k}(x)\;{\rm{d}}x \\
		& = 2\pi\left(\frac{1}{k^{2s}\mf{m}(k)}-\frac{1}{2}\frac{1}{(2k)^{2s}\mf{m}(2k)-k^{2s}\mf{m}(k)}\right) \\
		& = \frac{\pi}{ k^{2s}\mf{m}(k)}\frac{2^{2s+1}\mf{m}(2k)-3\mf{m}(k)}{2^{2s}\mf{m}(2k)-\mf{m}(k)}.
	\end{align*}
	Therefore, from the identity
	$$
	\left(\partial^{2}_{\l u}\mf{F}(\s_{k},0)[\varphi_{k}],\varphi_{k}^{\ast}\right)_{L^{2}}=\left(-\cos(kx), \cos(kx)\right)_{L^{2}}=-\pi,
	$$
	we deduce that
	$$
	\ddot{\L}_{k}(0) =-\frac{(\partial^{2}_{uu}\mf{F}(\s_{k},0)[\varphi_{k},\phi_{k}],\varphi_{k}^{\ast})_{L^2}}{( \partial^{2}_{\l u}\mf{F}(\s_{k},0)[\varphi_{k}],\varphi_{k}^{\ast})_{L^2}}=\frac{1}{ k^{2s}\mf{m}(k)}\frac{2^{2s+1}\mf{m}(2k)-3\mf{m}(k)}{2^{2s}\mf{m}(2k)-\mf{m}(k)}.
	$$
	This proves \eqref{BF0}. Finally, let us prove the inequality
	$$
	\ddot{\L}_{k}(0)\geq \frac{1}{k^{2s}\mf{m}(k)}>0, \quad k\in \N. 
	$$
	Indeed, by \eqref{BF0}, this is equivalent to
	$$
	\frac{2^{2s+1}\mf{m}(2k)-3\mf{m}(k)}{2^{2s}\mf{m}(2k)-\mf{m}(k)}\geq 1.
	$$
	But note that we can rewrite this inequality as $2^{2s-1}\mf{m}(2k)-\mf{m}(k)\geq 0$ which holds by hypothesis (M2). This concludes the proof.
	\end{proof}

\section{Constant solutions}\label{S5}

In this section we study the existence and structure of the set of constant solutions of equation \eqref{e}.  Firstly, we observe that there is a trivial branch of constant positive and negative solutions emanating from $(0,0)\in \R\times H^{2s}_{+}(\mathbb{T})$.

\begin{proposition}
	\label{Pr5.1}
	The equation $\mc{L}u= \l u + |u|^p$,  $x\in \mathbb{T}$, admits the following unique non-trivial constant solutions:
	\begin{equation}
		\label{Const}
	u_{\l}(x)= -\l^{\frac{1}{p-1}}, \quad x\in \mathbb{T}, \; \; \l>0, \quad u_{\l}(x)= (-\l)^{\frac{1}{p-1}}, \quad x\in \mathbb{T}, \; \; \l<0.
	\end{equation}
	These constant solutions exist for each $\l\in\R\setminus\{0\}$, being positive if $\l<0$ and negative if $\l>0$. 
\end{proposition}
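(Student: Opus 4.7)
The plan is to substitute a constant ansatz $u \equiv c$ into the equation and reduce the problem to a scalar algebraic one. The key observation is that constants lie in the kernel of $\mc{L}$: if $u \equiv c$, then $\widehat{u}(0) = c$ and $\widehat{u}(n) = 0$ for $n \neq 0$, so the defining Fourier expansion
\[
\mc{L}u = \sum_{n\in\Z}|n|^{2s}\mf{m}(n)\widehat{u}(n)e^{inx}
\]
yields $\mc{L}c = 0$ because the factor $|n|^{2s}$ vanishes at $n=0$. This is already consistent with Proposition \ref{Pr3.2}, which identifies $\spann\{1\}$ as the kernel of $\mf{L}(0)$.

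With $\mc{L}c = 0$, the equation collapses to the scalar relation
\[
0 = \l c + |c|^{p}, \qquad \text{i.e.,} \qquad |c|^{p} = -\l c.
\]
From here I would split into three cases according to the sign of $\l$. If $\l = 0$, the relation forces $|c|^p = 0$, hence $c = 0$, so no non-trivial constant solution exists. If $\l > 0$, then $-\l c \geq 0$ requires $c \leq 0$, and writing $c = -|c|$ the equation becomes $|c|^{p} = \l |c|$; the only non-zero solution is $|c| = \l^{1/(p-1)}$, giving $c = -\l^{1/(p-1)}$. If $\l < 0$, the same reasoning forces $c \geq 0$, and the equation $c^{p} = (-\l) c$ yields the unique non-trivial solution $c = (-\l)^{1/(p-1)}$.

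In each case the solution is unique among non-zero constants, and its sign is determined by the sign of $\l$ as stated in \eqref{Const}. No step here is genuinely subtle; the only point worth emphasizing explicitly in the write-up is the identity $\mc{L}c = 0$, since the operator $\mc{L}$ is defined via a Fourier multiplier and the vanishing of the symbol at $n=0$ is what makes the reduction trivial. The rest is elementary algebra.
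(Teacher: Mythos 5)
Your proof is correct and follows essentially the same route as the paper's: observe that the Fourier multiplier $|n|^{2s}\mf{m}(n)$ vanishes at $n=0$, so $\mc{L}$ annihilates constants, and then solve the resulting scalar equation $\l c + |c|^p = 0$. You spell out the case analysis that the paper leaves to the reader; otherwise the arguments coincide.
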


\begin{proof}
	Let $\alpha\in\R$ and consider the constant function $u(x)=\alpha$, $x\in\mathbb{T}$. Then, as
	$\widehat{u}(0)=\alpha$ and $\widehat{u}(n)=0$ for $n\neq 0$, we deduce that
	$$
	\mc{L}u = \sum_{n\in\Z}|n|^{2s} \mf{m}(n) \widehat{u}(n) e^{inx} = 0.
	$$
	Therefore, if we suppose that $u$ is a solution of $\mc{L}u = \l u +  |u|^{p}$,  $x\in \mathbb{T}$, we reach to the equation
	$\l \alpha +|\alpha|^{p}=0$, from which the result follows.
\end{proof}
Let us denote by $\mathscr{C}_{0}$ the connected component of $\mf{S}$ such that $(0,0)\in \mathscr{C}_{0}$. By proposition \ref{Pr5.1}, we have that
$$
\{(\l,u)\in\R\times H^{2s}_{+}(\mathbb{T}) \; : \; u = u_{\l} \hbox{ as in \eqref{Const}}\}\subset\mathscr{C}_{0}.
$$
Therefore, $\mathscr{C}_{0}$ is unbounded and $\mc{P}_{\l}(\mathscr{C}_{0})=\R$ where $\mc{P}_{\l}:\R\times H^{2s}_{+}(\R)\to \R$, $(\l,u)\mapsto \l$, is the $\l·$-projection. Our next purpose is to study in greater detail the connected component $\mathscr{C}_{0}$. For that, we will call the solutions
$$
\mc{T}_{2}:=\{(\l,u)\in\R\times H^{2s}_{+}(\mathbb{T}) \; : \; u = u_{\l} \hbox{ as in \eqref{Const}}\},
$$
the \textit{secondary trivial branch}. The linearization of $\mf{F}$ on $\mc{T}_{2}$ is given by 
$$
\mf{P}:\R\longrightarrow\Phi_{0}(H^{2s}_{+}(\mathbb{T}),L^{2}_{+}(\mathbb{T})), \quad \mf{P}(\l)[v]=\partial_{u}\mf{F}(\l,u_{\l})[v] = \mc{L}v+\l(p-1) v.
$$
The next result studies the spectral properties of $\mf{P}$. We omit the proof as it is analogous to that of Lemma \ref{L3.1} and Proposition \ref{Pr3.3}.
\begin{lemma}
	\label{L5.2}
	The generalized spectrum of the family $\mf{P}(\l)$ is given by
	\begin{equation}
		\label{E1.1}
		\Sigma(\mf{P})=\left\{-\frac{k^{2s} \mf{m}(k)}{p-1} \, : \, k\in\N\right\}\cup\{0\}.
	\end{equation}
	Moreover, they are ordered as
	$$
	0>-\frac{\mf{m}(1)}{p-1}>-\frac{2^{2s} \mf{m}(k)}{p-1}>\cdots>-\frac{k^{2s} \mf{m}(k)}{p-1}>\cdots,
	$$
	and it holds that
	\begin{align*}
		N[\mf{P}(0)]=\spann\{1\}, \quad N[\mf{P}(-\tfrac{k^{2s} \mf{m}(k)}{p-1})]=\spann\left\{\cos(kx)\right\}, \quad k\in \N.
	\end{align*}
	Finally, the generalized eigenvalue $-\tfrac{k^{2s} \mf{m}(k)}{p-1}$ is $1$-transversal and its generalized algebraic multiplicity is
	$$\chi[\mf{P},-\tfrac{k^{2s} \mf{m}(k)}{p-1}]=1.$$
\end{lemma}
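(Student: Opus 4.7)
The plan is to exploit a simple but efficient observation: the family $\mf{P}(\l)$ is an affine rescaling of $\mf{L}(\l)$. Indeed, directly from the definitions,
\begin{equation*}
\mf{P}(\l)[v] = \mc{L}v+\l(p-1)v = \mc{L}v-(-(p-1)\l)v = \mf{L}(-(p-1)\l)[v],
\end{equation*}
so that $\mf{P}(\l) = \mf{L}(\tau(\l))$ with $\tau(\l):=-(p-1)\l$. Since $p\geq 2$, the map $\tau:\R\to\R$ is a smooth diffeomorphism, and every spectral or bifurcation-theoretic invariant of $\mf{P}$ can be read off from the corresponding invariant of $\mf{L}$ already computed in Propositions \ref{Pr3.2} and \ref{Pr3.3}.

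First, I would transfer the generalized spectrum: $\l\in\Sigma(\mf{P})$ if and only if $\tau(\l)\in\Sigma(\mf{L})$, which by \eqref{L3.1} gives either $\tau(\l)=0$ (whence $\l=0$) or $\tau(\l)=k^{2s}\mf{m}(k)$ for some $k\in\N$ (whence $\l=-k^{2s}\mf{m}(k)/(p-1)$), thus yielding \eqref{E1.1}. The ordering follows from the strict monotonicity of $k\mapsto k^{2s}\mf{m}(k)$, which is a direct consequence of (M2) together with the fact that $k^{2s}$ is strictly increasing and $\mf{m}(k)\geq m_{0}>0$ by (M3). Because $\mf{P}(\l)$ and $\mf{L}(\tau(\l))$ are the same operator, the kernels are exactly those identified in Proposition \ref{Pr3.2}: namely $\spann\{1\}$ at $\s=0$ and $\spann\{\cos(kx)\}$ at $\s=-k^{2s}\mf{m}(k)/(p-1)$.

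Next, I would establish $1$-transversality. Differentiating $\mf{P}(\l) = \mf{L}(\tau(\l))$ and using \eqref{Ep},
\begin{equation*}
\mf{P}_{1}(\l) = \tau'(\l)\,\mf{L}_{1}(\tau(\l)) = -(p-1)\cdot(-J) = (p-1)J.
\end{equation*}
At any generalized eigenvalue $\s$ of $\mf{P}$, self-adjointness of $\mf{P}(\s)$ on $L^{2}_{+}(\mathbb{T})$ (inherited from that of $\mc{L}$, exactly as in the argument preceding Proposition \ref{Pr3.3}) gives $R[\mf{P}(\s)]=N[\mf{P}(\s)]^{\perp}$. Hence for $\s=-k^{2s}\mf{m}(k)/(p-1)$,
\begin{equation*}
R[\mf{P}(\s)] = \left\{u\in L^{2}_{+}(\mathbb{T}) \, : \, \int_{\mathbb{T}}\cos(kx)\,u=0\right\},
\end{equation*}
and similarly $R[\mf{P}(0)] = \{u\in L^{2}_{+}(\mathbb{T}) : \int_{\mathbb{T}}u=0\}$. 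Since $p\geq 2$, the elements $(p-1)\cos(kx)$ and $(p-1)$ lie outside the respective ranges, so the transversality condition $\mf{P}_{1}(\s)(N[\mf{P}(\s)])\oplus R[\mf{P}(\s)] = L^{2}_{+}(\mathbb{T})$ holds at every $\s\in\Sigma(\mf{P})$. By \eqref{CTCR}, each such $\s$ is $1$-transversal and $\chi[\mf{P},\s]=\dim N[\mf{P}(\s)]=1$.

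I do not foresee any substantive obstacle: the whole argument is a transparent translation of Propositions \ref{Pr3.2} and \ref{Pr3.3} through the affine reparametrization $\tau$, and every individual step reduces to a computation already carried out in Section \ref{S3}.
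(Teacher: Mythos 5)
Your proof is correct, and you have found a genuinely cleaner route than what the paper gestures at. The paper simply omits the proof, remarking that it is "analogous" to Propositions \ref{Pr3.2} and \ref{Pr3.3} — that is, it invites the reader to re-run the Fourier-side kernel computation, the self-adjointness argument, and the transversality check word for word with $\l$ replaced by $-\l(p-1)$. Your observation that $\mf{P}(\l)=\mf{L}(\tau(\l))$ with $\tau(\l)=-(p-1)\l$ does all of that in one stroke: the generalized spectrum, the kernels, and (since $\tau'\equiv -(p-1)\neq 0$) the derivative $\mf{P}_1(\l)=\tau'(\l)\mf{L}_1(\tau(\l))=(p-1)J$ are literal transfers, and because $J$ is merely multiplied by the nonzero constant $p-1$, the non-membership $\mf{P}_1(\s)[\cos(kx)]\notin R[\mf{P}(\s)]$ and $\mf{P}_1(0)[1]\notin R[\mf{P}(0)]$ are immediate from \eqref{Rr}. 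Transversality and $\chi[\mf{P},\cdot]=1$ then follow from \eqref{CTCR} exactly as before. What the reparametrization buys is that none of the earlier arguments need to be re-examined at all: every invariant of $\mf{P}$ is the pullback of the corresponding invariant of $\mf{L}$ under a diffeomorphism of the parameter line. Your verification of the strict ordering via (M2) and (M3) (monotonicity of $k^{2s}$, $\mf{m}(k)\geq m_0>0$) is also correct and matches what Proposition \ref{Pr3.2} implicitly uses. One small caveat worth stating explicitly if you write this up: the generalized algebraic multiplicity $\chi$ as defined in Appendix \ref{A1} is invariant under precomposition with a local diffeomorphism of the parameter, which is what licenses reading $\chi[\mf{P},\s]$ off from $\chi[\mf{L},\tau(\s)]$; you effectively reprove this by exhibiting the transversality directly, so there is no gap, but it is the underlying principle.
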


Thanks to Lemma \ref{L5.2}, we are able to prove that for $\l>0$, the connected component $\mathscr{C}_{0}$ is consisted uniquely on the negative constant solutions $u_{\l}(x)=-\l^{\frac{1}{p-1}}$. This is the content of the following proposition.
 
 \begin{proposition}
 	\label{Le7.1}
 	The following set identity holds:
 	\begin{equation}
 		\label{Si}
 	\mathscr{C}_{0}\cap \{(\l,u)\in\R\times H^{2s}_{+}(\R) : \l>0\} = \{(\l,u)\in\R\times H^{2s}_{+}(\mathbb{T}) : \l>0, \;  u(x)=-\l^{\frac{1}{p-1}}\}.
 	\end{equation}
 \end{proposition}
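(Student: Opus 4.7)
The plan is to prove both inclusions, with the nontrivial one established by a clopen argument exploiting the connectedness of $\mathscr{C}_0$. The easy inclusion $\supseteq$ follows because, by Proposition \ref{Pr5.1}, the curve $\{(\l,-\l^{1/(p-1)}):\l\geq 0\}$ is a continuous (hence connected) subset of $\mf{S}$ containing $(0,0)$; by definition of connected component, it lies inside $\mathscr{C}_0$.

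For the nontrivial inclusion $\subseteq$, I would consider the set
\[
F:=\{(\l,u)\in\mathscr{C}_0 \;:\; \l>0 \text{ and } u\neq -\l^{1/(p-1)}\}
\]
and show it is clopen in $\mathscr{C}_0$; since $(0,0)\in\mathscr{C}_0\setminus F$ and $\mathscr{C}_0$ is connected, this forces $F=\emptyset$, which is exactly the desired inclusion. Openness of $F$ in $\mathscr{C}_0$ is immediate from the continuity in $H^{2s}_+(\mathbb{T})$-norm of the map $\l\mapsto -\l^{1/(p-1)}$ on $(0,\infty)$, so both defining conditions of $F$ cut out an open subset of $\R\times H^{2s}_+(\mathbb{T})$.

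Closedness of $F$ is the main work. I would pick a sequence $(\l_n,u_n)\in F$ converging to $(\l^*,u^*)\in\mathscr{C}_0$ and split into three cases according to the sign of $\l^*$. The case $\l^*<0$ is vacuous since $\l_n>0$. When $\l^*>0$, suppose for contradiction that $u^*=-(\l^*)^{1/(p-1)}$; then the linearization at this point is $\mf{P}(\l^*)$, which by Lemma \ref{L5.2} is invertible since $\Sigma(\mf{P})\subset(-\infty,0]$. The implicit function theorem applied to $\mf{F}$ then yields a neighborhood of $(\l^*,u^*)$ in which the zero set of $\mf{F}$ coincides with $\mc{T}_2$, forcing $u_n=-\l_n^{1/(p-1)}$ for large $n$, which contradicts $(\l_n,u_n)\in F$; hence $u^*\neq -(\l^*)^{1/(p-1)}$ and $(\l^*,u^*)\in F$. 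The remaining case $\l^*=0$ is the crux: first, integrating $\mc{L}u^*=|u^*|^p$ over $\mathbb{T}$ and using that $|n|^{2s}\mf{m}(n)$ vanishes at $n=0$ gives $\int_\mathbb{T}|u^*|^p=0$, whence $u^*\equiv 0$; second, the Crandall-Rabinowitz analysis at $\s_0=0$ (which is $1$-transversal with $N[\mf{L}(0)]=\spann\{1\}$ by Proposition \ref{Pr3.3}) produces a ball around $(0,0)$ in which $\mf{S}$ coincides with $\mc{T}_2$, so again $u_n=-\l_n^{1/(p-1)}$ for large $n$, contradicting $(\l_n,u_n)\in F$.

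The main obstacle I anticipate is the case $\l^*=0$: unlike the case $\l^*>0$, where the implicit function theorem directly delivers local uniqueness, here one must separately rule out non-trivial solutions at $\l=0$ (by Fourier integration, using that the symbol vanishes at $n=0$) and then invoke the local bifurcation-theoretic uniqueness near $(0,0)$ provided by the Crandall-Rabinowitz theorem applied at the ``secondary'' eigenvalue $\s_0=0$. All other aspects reduce to a single application of the implicit function theorem together with the non-positivity of $\Sigma(\mf{P})$ established in Lemma \ref{L5.2}.
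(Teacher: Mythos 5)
Your proof is correct, and it relies on exactly the same three ingredients as the paper's proof: Corollary~\ref{Cr6.2} (the unique solution at $\lambda=0$ is $u\equiv 0$), the local uniqueness furnished by the Crandall--Rabinowitz Theorem~\ref{Cr-Rb} at the eigenvalue $\sigma_0=0$, and the implicit function theorem on the secondary trivial branch combined with $\Sigma(\mf{P})\cap(0,+\infty)=\emptyset$ from Lemma~\ref{L5.2}. What differs is the topological packaging. The paper proceeds in two steps: it first proves that $\mathscr{C}_0\cap\{\lambda>0\}$ is connected (using Corollary~\ref{Cr6.2} together with Crandall--Rabinowitz uniqueness at $\sigma_0$), and then, assuming a nontrivial point exists in $\mathscr{C}_0$ with $\lambda>0$, uses this connectedness to produce a sequence of non-constant solutions converging to a point of $\mc{T}_2$, from which the implicit function theorem and the spectrum of $\mf{P}$ give the contradiction. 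You instead run a single clopen argument directly on $\mathscr{C}_0$, with the set $F$ of ``bad'' points; closedness of $F$ is what absorbs both the $\lambda^*=0$ boundary case (via Corollary~\ref{Cr6.2} and Crandall--Rabinowitz at $\sigma_0$) and the $\lambda^*>0$ case (via the implicit function theorem). The gain of your formulation is that it avoids having to establish the intermediate connectedness of the slice $\mathscr{C}_0\cap\{\lambda>0\}$, a step the paper argues somewhat informally via two hypothetical components whose closures must meet $\{\lambda=0\}$; your version turns that same content into the closedness verification, which is cleaner. One small point worth being explicit about in the $\lambda^*=0$ case: once $(\lambda^*,u^*)=(0,0)$, you must also observe that $u_n\neq 0$ for large $n$ (because $\lambda_n\in(0,\mf{m}(1))$, so $(\lambda_n,0)\notin\mf{S}$), so that Crandall--Rabinowitz uniqueness forces $u_n$ to lie on $\mc{T}_2$, contradicting $(\lambda_n,u_n)\in F$; your sketch leaves this implicit but it is easily supplied.
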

 
 \begin{proof}
 	 Let us start by proving that the set $\mathscr{C}_{0}\cap \{(\l,u)\in\R\times H^{2s}_{+}(\R) : \l>0\}$ is connected. Indeed, if this is not true, there exists (at least) two connected components $\mathscr{C}_{0,1}$ and $\mathscr{C}_{0,2}$ of $\mathscr{C}_{0}\cap \{(\l,u)\in\R\times H^{2s}_{+}(\R) : \l>0\}$. As $\mathscr{C}_{0}$ is connected, necessarily 
 	 $$
 	 \overline{\mathscr{C}_{0,i}}\cap \{(\l,u)\in\R\times H^{2s}_{+}(\R) : \l=0\}\neq \emptyset \: \; \text{for each} \; i\in \{1,2\}.
 	 $$ By the forthcoming Corollary \ref{Cr6.2}, the unique solution of equation \eqref{Ee} for $\l=0$ is $u=0$. Therefore, as $\mathscr{C}_{0}$ is closed, 
 	 \begin{equation}
 	 	\label{EE}
 	 	\overline{\mathscr{C}_{0,i}}\cap \{(\l,u)\in\R\times H^{2s}_{+}(\R) : \l=0\}=\{(0,0)\}, \;\; \text{for each} \; i\in\{1,2\}.
 	 \end{equation}
 	 By the use of the uniqueness part of the Crandall--Rabinowitz Theorem \ref{Cr-Rb} applied to the eigenvalue $\s_{0}=0$, the unique solutions of \eqref{Ee} emanating from $(0,0)$ are the constant ones given in Proposition \ref{Pr5.1}. This contradicts \eqref{EE}. Hence, $\mathscr{C}_{0}\cap \{(\l,u)\in\R\times H^{2s}_{+}(\R) : \l>0\}$ is connected.
 	 
 	 If identity \eqref{Si} does not hold, there must exist $(\l_0,u_0)\in \mathscr{C}_{0}$ with $\l_{0}>0$ and $u_0\neq -\l_0^{\frac{1}{p-1}}$.
 	  As $(\l_0,u_0)\in\mathscr{C}_{0}$, the connectivity of $\mathscr{C}_{0}\cap \{(\l,u)\in\R\times H^{2s}_{+}(\R) :  \l>0\}$ implies the existence of a connected subset 
 	 $$
 	 A\subset\mathscr{C}_{0}\cap \{(\l,u)\in\R\times H^{2s}_{+}(\R) :  \l>0\}
 	 $$ 
 	 such that $(\l_0,u_0), (\l_{\ast}, -\l_{\ast}^{\frac{1}{p-1}})\in A$ for some $\l_{\ast}>0$. See Figure \ref{F1} for a graphical explanation of this fact. In this figure, and in all subsequent ones, we are representing the value of the parameter $\l$ in abscissas versus the norm $\|u\|_{H^{2s}}$ or $-\|u\|_{H^{2s}}$. This differentiation is made in order to express the multiplicity of solutions in the subsequent analysis. Therefore there must exists a sequence 
 	$
 	\{(\l_{n},u_{n})\}_{n\in\N}\subset A$, with $u_{n}$ non-constant, such that
 	$$
 	\lim_{n\to+\infty}(\l_n,u_n) = (\l_{\ast}, -\l_{\ast}^{\frac{1}{p-1}}) \;\; \hbox{in} \;\; \R\times H^{2s}_{+}(\mathbb{T}).
 	$$
 	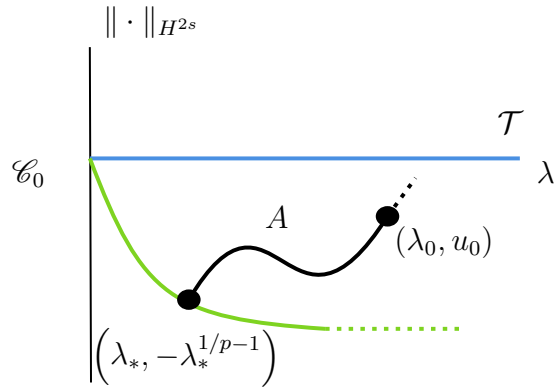
\begin{figure}[h!]

 		\tikzset{every picture/.style={line width=0.75pt}} 
 		
 		\begin{tikzpicture}[x=0.75pt,y=0.75pt,yscale=-1,xscale=1]
 			
 			\draw    (163.86,43.73) -- (164.44,213) ;
 			\draw [color={rgb, 255:red, 74; green, 144; blue, 226 }  ,draw opacity=1 ][line width=1.5]    (163.86,99.9) -- (380.73,99.9) ;
 			\draw [color={rgb, 255:red, 126; green, 211; blue, 33 }  ,draw opacity=1 ][line width=1.5]    (163.86,99.9) .. controls (190.45,170.11) and (204.78,180.64) .. (282.52,185.91) ;
 			\draw  [color={rgb, 255:red, 0; green, 0; blue, 0 }  ,draw opacity=1 ][fill={rgb, 255:red, 0; green, 0; blue, 0 }  ,fill opacity=1 ] (308.56,129.13) .. controls (308.56,126.56) and (310.99,124.47) .. (313.99,124.47) .. controls (316.98,124.47) and (319.41,126.56) .. (319.41,129.13) .. controls (319.41,131.7) and (316.98,133.78) .. (313.99,133.78) .. controls (310.99,133.78) and (308.56,131.7) .. (308.56,129.13) -- cycle ;
 			\draw [color={rgb, 255:red, 126; green, 211; blue, 33 }  ,draw opacity=1 ][line width=1.5]  [dash pattern={on 1.69pt off 2.76pt}]  (282.52,185.91) -- (350.04,185.91) ;
 			\draw [line width=1.5]    (213.8,171.18) .. controls (256.76,99.21) and (264.95,206.28) .. (313.99,129.13) ;
 			\draw  [color={rgb, 255:red, 0; green, 0; blue, 0 }  ,draw opacity=1 ][fill={rgb, 255:red, 0; green, 0; blue, 0 }  ,fill opacity=1 ] (208.37,171.18) .. controls (208.37,168.61) and (210.8,166.52) .. (213.8,166.52) .. controls (216.79,166.52) and (219.22,168.61) .. (219.22,171.18) .. controls (219.22,173.75) and (216.79,175.83) .. (213.8,175.83) .. controls (210.8,175.83) and (208.37,173.75) .. (208.37,171.18) -- cycle ;
 			\draw [line width=1.5]  [dash pattern={on 1.69pt off 2.76pt}]  (313.99,129.13) -- (328.12,109.76) ;
 			
 			\draw (169.03,21.57) node [anchor=north west][inner sep=0.75pt]    {$\| \cdot \| _{H^{2s}}$};
 			\draw (387.85,99.79) node [anchor=north west][inner sep=0.75pt]    {$\lambda $};
 			\draw (250.78,122.39) node [anchor=north west][inner sep=0.75pt]    {$A$};
 			\draw (163.25,183.85) node [anchor=north west][inner sep=0.75pt]  [rotate=-0.65]  {$\left( \lambda _{\ast } ,-\lambda _{\ast }^{1/p-1}\right)$};
 			\draw (315.99,132.53) node [anchor=north west][inner sep=0.75pt]    {$( \lambda _{0} ,u_{0})$};
 			\draw (123.04,99.26) node [anchor=north west][inner sep=0.75pt]    {$\mathscr{C}_{0}$};
 			\draw (368,74.4) node [anchor=north west][inner sep=0.75pt]    {$\mathcal{T}$};

 		\end{tikzpicture}
 		\caption{Graphic representation of the connected subset $A$}
 		\label{F1}
 	\end{figure}
 	
 	\noindent But then, by the implicit function theorem, we deduce that 
 	$$
 	\mf{P}(\l_{\ast})=\partial_{u}\mf{F}(\l_{\ast},-\l_{\ast}^{\frac{1}{p-1}})\notin GL(H^{2s}_{+}(\mathbb{T}),L^{2}_{+}(\mathbb{T})).
 	$$
 	Therefore, $\l_{\ast}\in\Sigma(\mf{P})$. As $\l_{\ast}>0$, this contradicts Lemma \ref{L5.2}, identity \eqref{E1.1}. The proof is concluded.
 \end{proof}

\section{A priori bounds}\label{S6}

This section is devoted to obtain $H^{2s}$ and $L^{\infty}$-a priori bounds for solutions of \eqref{e} in terms of the bifurcation parameter $\l$. This bounds will be fundamental in order to apply the global bifurcation techniques. The main ingredient we use for getting a priori bounds will be the almost sharp fractional Gagliardo--Nirenberg--Sobolev inequality on the torus $\mathbb{T}$  obtained recently by Liang and Wang \cite{LW}.
\par Our first result establishes $L^{2}$-a priori bounds for weak solutions of equation \eqref{e}.

\begin{proposition}
	\label{Pr6.1}
	Let $u\in H^{s}(\mathbb{T})$ be a weak solution of 
	\begin{equation}
		\label{EQ2}
	\mc{L}u = \l u +|u|^{p}, \quad x\in\mathbb{T}.
    \end{equation}
	Then, it holds that
	\begin{equation}
		\label{E}
	\|u\|_{L^{2}} \leq \sqrt{2\pi}|\l|^{\frac{1}{p-1}}.
	\end{equation}
\end{proposition}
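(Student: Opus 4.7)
The plan is to exploit the weak formulation by choosing the simplest possible test function, namely the constant $\varphi \equiv 1$. Since $\widehat{\varphi}(n) = 0$ for $n \neq 0$ and the Fourier multiplier side carries the factor $|n|^{2s}$ which kills the $n=0$ mode, the bilinear form $\int_{\mathbb{T}} \mc{L}u \cdot \varphi$ vanishes identically on constants. Consequently, testing \eqref{EQ2} against $\varphi = 1$ yields the integral identity
\begin{equation*}
\int_{\mathbb{T}} |u|^{p} = -\l \int_{\mathbb{T}} u.
\end{equation*}
In particular, $|\l| \neq 0$ forces $\l \int_{\mathbb{T}} u \leq 0$, and the case $\l = 0$ forces $u \equiv 0$, for which \eqref{E} holds trivially.

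Next I would bound the right-hand side by H\"older's inequality with conjugate exponents $p$ and $p/(p-1)$:
\begin{equation*}
\left| \int_{\mathbb{T}} u \right| \leq (2\pi)^{\frac{p-1}{p}} \, \|u\|_{L^{p}}.
\end{equation*}
Inserting this into the integral identity gives $\|u\|_{L^{p}}^{p} \leq |\l| (2\pi)^{(p-1)/p} \|u\|_{L^{p}}$, which after dividing by $\|u\|_{L^{p}}$ (trivially true if $u \equiv 0$) produces the $L^{p}$-bound
\begin{equation*}
\|u\|_{L^{p}} \leq |\l|^{\frac{1}{p-1}} (2\pi)^{\frac{1}{p(p-1)}}.
\end{equation*}

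Finally, since $p \geq 2$, a second application of H\"older's inequality (with exponents $p/2$ and $p/(p-2)$ when $p>2$, trivially when $p=2$) gives the embedding estimate
\begin{equation*}
\|u\|_{L^{2}} \leq (2\pi)^{\frac{p-2}{2p}} \|u\|_{L^{p}}.
\end{equation*}
Combining the two displayed inequalities, the exponents of $2\pi$ add up to $\tfrac{1}{p(p-1)} + \tfrac{p-2}{2p}$; a direct arithmetic check shows this equals $\tfrac{1}{2}$ precisely when one uses the tighter bound $\|u\|_{L^p} \leq |\l|^{1/(p-1)} (2\pi)^{1/p}$ coming from $\|u\|_{L^{p}}^{p-1} \leq |\l|(2\pi)^{(p-1)/p}$, so that the final exponent of $2\pi$ is $\tfrac{1}{p} + \tfrac{p-2}{2p} = \tfrac{1}{2}$. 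This delivers exactly $\|u\|_{L^{2}} \leq \sqrt{2\pi}\, |\l|^{1/(p-1)}$, which is \eqref{E}.

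No real obstacle is anticipated here; the only subtlety is that the argument relies crucially on the fact that the multiplier symbol vanishes at $n=0$ (i.e.\ $|n|^{2s}|_{n=0}=0$), which is why constant test functions annihilate the left-hand side and produce the key identity relating $\int u$ and $\int |u|^p$. The rest is a clean interpolation via H\"older.
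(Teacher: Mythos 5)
Your proof is correct and follows essentially the same strategy as the paper: test the weak formulation against $\varphi \equiv 1$ to obtain $\int_{\mathbb{T}}|u|^p = -\lambda\int_{\mathbb{T}} u$, then close with H\"older on the finite-measure torus (the paper bounds $\|u\|_{L^2}$ by $\|u\|_{L^p}$ and $\|u\|_{L^1}$ by $\|u\|_{L^2}$ to isolate $\|u\|_{L^2}^{p-1}$, whereas you isolate $\|u\|_{L^p}^{p-1}$ first and then pass to $L^2$, which is the same arithmetic in a different order). Note only that the exponent in your intermediate $L^p$ display should read $(2\pi)^{1/p}$ rather than $(2\pi)^{1/(p(p-1))}$, a slip you yourself correct in the closing paragraph.
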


\begin{proof}
	Firstly, recall that a weak solution $u\in H^{s}(\mathbb{T})$ of \eqref{EQ2} satisfies
	\begin{equation}
		\label{WS}
	2\pi\sum_{n\in\Z}|n|^{2s}\mf{m}(n)\widehat{u}(n)\widehat{\varphi}(n) = \l \int_{\mathbb{T}}u\varphi + \int_{\mathbb{T}}|u|^{p}\varphi, \quad \text{for each} \;\; \varphi\in \mc{C}^{\infty}(\mathbb{T}).
	\end{equation}
	Note that as $s\geq \tfrac{1}{2}$, \eqref{WS} makes sense because for every $q > 1$, we have the Sobolev embedding $H^{s}(\mathbb{T})\hookrightarrow L^{q}(\mathbb{T})$. Then, choosing $\varphi = 1$ in the last identity, we obtain
	$$
	\int_{\mathbb{T}} |u|^{p} =  -\l \int_{\mathbb{T}} u.
	$$
	By H\"{o}lder's inequality, we deduce
	$$
	\frac{1}{(2\pi)^{\frac{p-2}{2}}}\left(\int_{\mathbb{T}}|u|^{2}\right)^{\frac{p}{2}}\leq \int_{\mathbb{T}}|u|^{p}\leq |\l| \int_{\mathbb{T}}|u|.
	$$
	Applying the Cauchy--Schwarz inequality we obtain
	$$
	\frac{1}{(2\pi)^{\frac{p-2}{2}}}\|u\|_{L^{2}}^{p}\leq |\l|\|u\|_{L^{1}}\leq \sqrt{2\pi}  |\l| \|u\|_{L^{2}}.
	$$
	This concludes the proof.
\end{proof}

As a rather direct consequence of Proposition \ref{Pr6.1}, we get that the unique solution of equation \eqref{e} with $\l=0$ is $u\equiv 0$. This is the content of the following corollary.

\begin{corollary}
	\label{Cr6.2}
	The unique weak solution $u\in H^{s}(\mathbb{T})$ of the equation
	\begin{equation*}
		\label{EQ3}
		\mc{L}u= |u|^{p}, \quad x\in\mathbb{T},
	\end{equation*}
	is $u\equiv 0$.
\end{corollary}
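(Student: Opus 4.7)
The statement is essentially the endpoint case $\lambda = 0$ of Proposition \ref{Pr6.1}, and the plan is to specialize the same test-function argument. Specifically, if $u\in H^{s}(\mathbb{T})$ is a weak solution of $\mc{L}u=|u|^{p}$, I would test the weak formulation against the constant function $\varphi\equiv 1\in\mc{C}^{\infty}(\mathbb{T})$. Since $\widehat{\varphi}(n)=0$ for every $n\neq 0$ and the factor $|n|^{2s}$ annihilates the $n=0$ mode, the left-hand side $2\pi\sum_{n\in\Z}|n|^{2s}\mf{m}(n)\widehat{u}(n)\widehat{\varphi}(n)$ vanishes. Plugging in $\lambda=0$ on the right-hand side, one is left with the identity
\begin{equation*}
\int_{\mathbb{T}}|u|^{p}=0.
\end{equation*}

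Because $|u|^{p}\geq 0$ pointwise almost everywhere and the integrand is in $L^{1}(\mathbb{T})$ (the Sobolev embedding $H^{s}(\mathbb{T})\hookrightarrow L^{q}(\mathbb{T})$ for every $q>1$ that was invoked in the proof of Proposition \ref{Pr6.1} still applies since $s\geq\tfrac{1}{2}$), this forces $|u|^{p}=0$ almost everywhere, hence $u\equiv 0$. I do not anticipate any genuine obstacle; the only point worth checking is that the constant function is admissible as a test function and that the left-hand side of the weak formulation vanishes on it, both of which are immediate from the explicit Fourier definition of $\mc{L}$.
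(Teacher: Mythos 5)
Your proof is correct and essentially matches the paper's: the corollary is stated as a direct consequence of Proposition \ref{Pr6.1}, whose $L^{2}$-bound at $\lambda=0$ gives $\|u\|_{L^{2}}=0$, and you have simply unpacked that proof (testing against $\varphi\equiv 1$) at the endpoint $\lambda=0$.
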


The next result establishes $\dot{H}^{s}$-a priori bounds for weak solutions of \eqref{e}. Here, as we said earlier, the main ingredient will be the  almost sharp fractional Gagliardo--Nirenberg--Sobolev inequality on the torus of Liang and Wang \cite{LW}.

\begin{theorem}
	\label{Th6.3}
	Let $u\in H^{s}(\mathbb{T})$ be a weak solution of 
	\begin{equation}
		\label{Eqq3}
		\mc{L}u = \l u + |u|^{p}, \quad x\in\mathbb{T}.
	\end{equation}
	Then, if $p<4s+1$, for each $\rho>0$, it holds that
	\begin{equation}
		\label{E11}
	\|u\|_{\dot{H}^{s}} \leq \Phi_{\rho}(\l),
\end{equation}
where $\Phi_{\rho}: \R \to [0,+\infty)$ is an even continuous function such that $\Phi_{\rho}(0)=0$, $\Phi_{\rho}(\l)>0$ for $\l\neq0$ and 
$$\Phi_{\rho}(\l)\sim \mathscr{L}(s,p,\r,m_0)|\l|^{\frac{2sp+2s-p+1}{(4s-p+1)(p-1)}}, \quad \text{as} \;\; |\l|\to+\infty,$$
where 
\begin{equation}
	\label{L}
\mathscr{L} \equiv \mathscr{L}(s,p,\r,m_0):=\left(\frac{(C_{GNS}+\r)}{m_{0}}(2\pi)^{\frac{p-1}{4s}(2s-1)}\right)^{\frac{2s}{4s-p+1}},
\end{equation}
and $C_{GNS}>0$ is a positive constant appearing in the Gagliardo--Nirenberg--Sobolev inequality.
\end{theorem}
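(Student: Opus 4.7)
The plan is to derive an energy identity by testing the weak formulation of \eqref{Eqq3} against $u$ itself (which is legitimate since $u \in H^s(\mathbb{T}) \hookrightarrow L^{p+1}(\mathbb{T})$ for $s \geq \tfrac{1}{2}$, so by density the test space extends from $\mc{C}^{\infty}(\mathbb{T})$ to $H^s(\mathbb{T})$). This yields
\begin{equation*}
2\pi \sum_{n \in \Z} |n|^{2s}\mf{m}(n) |\widehat{u}(n)|^2 = \l \|u\|_{L^2}^2 + \int_{\mathbb{T}} u|u|^p.
\end{equation*}
Invoking hypothesis (M3) on the left and bounding $\big|\int_{\mathbb{T}} u|u|^p\big| \leq \|u\|_{L^{p+1}}^{p+1}$ on the right, I obtain the preliminary inequality
\begin{equation*}
2\pi m_0 \|u\|_{\dot{H}^s}^2 \;\leq\; |\l|\,\|u\|_{L^2}^2 + \|u\|_{L^{p+1}}^{p+1}.
\end{equation*}

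The crucial step is to control the $L^{p+1}$-term by the almost sharp fractional Gagliardo--Nirenberg--Sobolev inequality of Liang--Wang \cite{LW}: for every $\rho > 0$,
\begin{equation*}
\|u\|_{L^{p+1}}^{p+1} \leq (C_{GNS}+\rho)\,(2\pi)^{\frac{(p-1)(2s-1)}{4s}}\, \|u\|_{\dot{H}^s}^{\frac{p-1}{2s}} \|u\|_{L^2}^{\frac{2sp+2s-p+1}{2s}},
\end{equation*}
where the exponents are dictated by scaling on $\R$. The hypothesis $p < 4s+1$ is precisely the condition $\tfrac{p-1}{2s} < 2$, which makes the $\dot{H}^s$-factor absorbable. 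Applying Young's inequality with conjugate exponents $\tfrac{4s}{p-1}$ and $\tfrac{4s}{4s-p+1}$, and tuning the splitting parameter so that the $\varepsilon \|u\|_{\dot{H}^s}^2$ term is strictly dominated by $2\pi m_0\|u\|_{\dot{H}^s}^2$, I expect to arrive at an inequality of the form
\begin{equation*}
\|u\|_{\dot{H}^s}^2 \leq C_1\, |\l|\, \|u\|_{L^2}^2 + C_2\, \|u\|_{L^2}^{\frac{2(2sp+2s-p+1)}{4s-p+1}},
\end{equation*}
in which the constant $C_2$ is exactly the square of $\mathscr{L}$ from \eqref{L} after simplification.

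Finally, substituting $\|u\|_{L^2} \leq \sqrt{2\pi}\,|\l|^{1/(p-1)}$ from Proposition \ref{Pr6.1} produces two powers of $|\l|$: namely $|\l|^{(p+1)/(p-1)}$ from the first term and $|\l|^{2(2sp+2s-p+1)/((4s-p+1)(p-1))}$ from the second. A direct algebraic check shows that the difference of the numerators is $(p-1)^2 > 0$, so the second exponent strictly exceeds the first for all $p > 1$, and hence dominates as $|\l| \to \infty$. Defining $\Phi_\rho(\l)$ as the square root of the right-hand side of the displayed inequality yields an even, continuous, nonnegative function, with $\Phi_\rho(0) = 0$ consistent with Corollary \ref{Cr6.2} (which ensures $u \equiv 0$ is the only solution at $\l = 0$), strictly positive for $\l \neq 0$, and with the stated leading asymptotic as $|\l| \to \infty$.

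The main obstacle I expect is constant bookkeeping: tracking the precise form of the Liang--Wang constant through the torus normalization (which is the source of the $(2\pi)^{(p-1)(2s-1)/(4s)}$ factor in \eqref{L}) and verifying that the optimal Young splitting produces exactly the outer exponent $\tfrac{2s}{4s-p+1}$ appearing in $\mathscr{L}$. The remainder is a standard subcritical absorption argument.
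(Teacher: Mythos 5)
Your overall strategy coincides with the paper's: test the weak formulation against $u$, invoke the Liang--Wang Gagliardo--Nirenberg--Sobolev inequality, and exploit $p<4s+1$ to absorb the $\dot H^s$-factor. The one genuine difference is in the absorption step: you use a weighted Young inequality to produce an explicit closed-form bound, while the paper defines $\Phi_\rho(\l)$ implicitly as the unique positive root of the polynomial $\mf{p}_{\l,\rho}(x)=2\pi m_0 x^2-\mathscr{C}_1|\l|^{\alpha}x^{(p-1)/(2s)}-\mathscr{C}_2|\l|^{(p+1)/(p-1)}$ and reads off the asymptotics from the implicit relation. Your route is legitimate, and your claim that the sharp weighted Young split recovers exactly the asymptotic constant is in fact correct: maximising $(2\pi m_0-\varepsilon)(q\varepsilon)^{q'/q}$ over $\varepsilon\in(0,2\pi m_0)$ with $q=\tfrac{4s}{p-1}$, $q'=\tfrac{4s}{4s-p+1}$ gives the optimum at $\varepsilon=\tfrac{2\pi m_0(p-1)}{4s}$, and at that choice the leading coefficient on $\|u\|_{L^2}^{q'(2sp+2s-p+1)/(2s)}$ is exactly $\bigl(\tfrac{C_{GNS}+\rho}{2\pi m_0}\bigr)^{q'}$, which, after substituting $\|u\|_{L^2}\le\sqrt{2\pi}|\l|^{1/(p-1)}$, simplifies to $\mathscr{L}^2$.

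However, there is a genuine error in your statement of the Liang--Wang inequality. On the torus, the pure-power form
$\|u\|_{L^{p+1}}^{p+1}\lesssim\|u\|_{\dot H^s}^{(p-1)/(2s)}\|u\|_{L^2}^{(2sp+2s-p+1)/(2s)}$
you wrote is false: any nonzero constant has $\|u\|_{\dot H^s}=0$ but $\|u\|_{L^{p+1}}>0$. The actual inequality used in the paper (quoted from [LW, Prop.\ 2.3]) carries an unavoidable additive term $C(\rho)\|u\|_{L^2}^{p+1}$, namely
$\|u\|^{p+1}_{L^{p+1}}\le(C_{GNS}+\rho)\|u\|_{\dot H^s}^{(p-1)/(2s)}\|u\|_{L^2}^{2+\frac{p-1}{2s}(2s-1)}+C(\rho)\|u\|_{L^2}^{p+1}$.
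This additional term contributes, after Proposition \ref{Pr6.1}, an extra constant multiple of $|\l|^{(p+1)/(p-1)}$ — the same power your first term already gives — so the asymptotic analysis and the leading constant are unaffected, but as written your GNS step is invalid and the proof would not go through for the constant solutions. A second, purely bookkeeping slip: the factor $(2\pi)^{(p-1)(2s-1)/(4s)}$ does not belong \emph{inside} the GNS inequality. It enters $\mathscr{L}$ only after substituting $\|u\|_{L^2}\le\sqrt{2\pi}|\l|^{1/(p-1)}$ into the power $\|u\|_{L^2}^{2+\frac{p-1}{2s}(2s-1)}$. If you insert it both in the GNS constant and again through the $\|u\|_{L^2}$-bound, you will end up with $\mathscr{L}$ multiplied by a spurious factor $(2\pi)^{(p-1)(2s-1)/(4s-p+1)}$, which is nontrivial whenever $s>\tfrac12$.
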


\begin{proof}
	Firstly, recall that a weak solution $u\in H^{s}(\mathbb{T})$ of \eqref{Eqq3} satisfies
	\begin{equation}
		\label{EEE}
	2\pi\sum_{n\in\Z}|n|^{2s}\mf{m}(n)\widehat{u}(n)\widehat{\varphi}(n) = \l \int_{\mathbb{T}}u\varphi + \int_{\mathbb{T}}|u|^{p}\varphi, \quad \text{for each} \;\; \varphi\in H^{s}(\mathbb{T}).
    \end{equation}
	Then, choosing $\varphi = u\in H^{s}(\mathbb{T})$ in identity \eqref{EEE}, we obtain
	\begin{equation}
		\label{idd}
	2\pi\sum_{n\in\Z}|n|^{2s}\mf{m}(n)|\widehat{u}(n)|^{2} = \l \int_{\mathbb{T}}|u|^{2} + \int_{\mathbb{T}}|u|^{p} u. 
	\end{equation}
	Now, by the hypothesis (M3) on the multiplier $\mf{m}$, we obtain the bound
	$$
	2\pi\sum_{n\in\Z}|n|^{2s}\mf{m}(n)|\widehat{u}(n)|^{2} \geq 2\pi m_{0}\sum_{n\in\Z}|n|^{2s} |\widehat{u}(n)|^{2} = 2\pi m_{0} \|u\|_{\dot{H}^{s}}^{2}.
	$$
	Introducing this inequality on identity \eqref{idd} and bounding the right hand part, we obtain
	\begin{equation}
		\label{EQQ}
	2\pi m_{0} \|u\|_{\dot{H}^{s}}^{2} \leq |\l| \|u\|_{L^{2}}^{2} + \int_{\mathbb{T}}|u|^{p}u\leq |\l|\|u\|_{L^{2}}^{2}+\|u\|_{L^{p+1}}^{p+1}.
	\end{equation}
	The almost sharp Gagliardo--Nirenberg--Sobolev inequality of Liang--Wang \cite[Pr. 2.3]{LW}, states that for each $\rho>0$, there exists $C(\rho)>0$ such that 
	\begin{equation}
	\label{QQ}
	\|u\|^{p+1}_{L^{p+1}} \leq (C_{GNS}+\r) \|u\|_{\dot{H}^{s}}^{\frac{p-1}{2s}} \|u\|^{2+\frac{p-1}{2s}(2s-1)}_{L^{2}} + C(\rho) \|u\|^{p+1}_{L^{2}},
	\end{equation}
	where $C_{GNS}>0$ is a constant independent of $u$. Inequalities \eqref{E}, \eqref{EQQ} and \eqref{QQ} yield
	\begin{align*}
		 2\pi m_{0} \|u\|_{\dot{H}^{s}}^{2}  & \leq (C_{GNS}+\r)\|u\|^{2+\frac{p-1}{2s}(2s-1)}_{L^{2}}\|u\|_{\dot{H}^{s}}^{\frac{p-1}{2s}}+|\l| \|u\|^{2}_{L^{2}}+ C(\r)\|u\|^{p+1}_{L^{2}} \\
		& \leq (C_{GNS}+\r)(2\pi)^{1+\frac{p-1}{4s}(2s-1)}|\l|^{\frac{2}{p-1}+\frac{2s-1}{2s}}\|u\|_{\dot{H}^{s}}^{\frac{p-1}{2s}} \\
		& \quad + 2\pi|\l|^{\frac{2}{p-1}+1}+C(\r)(2\pi)^{\frac{p+1}{2}}|\l|^{\frac{p+1}{p-1}}.
	\end{align*}
	Equivalently, the last inequality can be rewritten as
	$$
	2\pi m_{0} \|u\|_{\dot{H}^{s}}^{2} - \mathscr{C}_{1} |\l|^{\frac{2}{p-1}+\frac{2s-1}{2s}} \|u\|_{\dot{H}^{s}}^{\frac{p-1}{2s}} - \mathscr{C}_{2} |\l|^{\frac{p+1}{p-1}}\leq 0,
	$$
	where
	\begin{align*}
	& \mathscr{C}_{1} \equiv \mathscr{C}_{1}(s,p,\r):=(C_{GNS}+\r)(2\pi)^{1+\frac{p-1}{4s}(2s-1)}, \\
	& \mathscr{C}_{2} \equiv \mathscr{C}_{2}(p,\r):= 2\pi + C(\r)(2\pi)^{\frac{p+1}{2}}.
	\end{align*}
	Let $\mf{p}_{\l,\r}:\R_{\geq0}\to\R$ be the function defined by
	$$
	\mf{p}_{\l,\r}(x):=2\pi m_{0} x^{2} - \mathscr{C}_{1} |\l|^{\frac{2}{p-1}+\frac{2s-1}{2s}} x^{\frac{p-1}{2s}} - \mathscr{C}_{2} |\l|^{\frac{p+1}{p-1}}.
	$$
	Note that as $p<4s+1$, the dominant power of $\mf{p}_{\l,\r}$ is $x^2$. This is an important fact, as if $p\geq 4s+1$, the dominant power of $\mf{p}_{\l,\r}$ is not necessarily $x^2$ and the following arguments cannot be applied.  As
	$$
	\mf{p}_{\l,\r}(0)=-\mathscr{C}_{2} |\l|^{\frac{p+1}{p-1}}<0, \quad \lim_{x\uparrow+\infty}\mf{p}_{\l,\r}(x)=+\infty,
	$$
	and by an analysis of the first derivative of $\mf{p}_{\l,\r}$, we infer the existence of a unique positive zero $\Phi_{\r}(\l)>0$ of $\mf{p}_{\l,\r}$. Moreover, 
	$$
	\mf{p}_{\l,\r}(x)<0, \quad x\in [0,\Phi_{\r}(\l)).
	$$
	Therefore, $\|u\|_{\dot{H}^{s}} \leq \Phi_{\r}(\l)$.  Note that $\Phi_{\r}:\R \to \R_{\geq 0}$ is continuous and $\Phi_{\r}(0)=0$. On the other hand, by rewriting $\Phi_{\r}(\l)$ in the implicit form
	$$
	2\pi m_{0}\Phi_{\r}^{2}(\l)-\mathscr{C}_{1}|\l|^{\frac{2}{p-1}+\frac{2s-1}{2s}} \Phi_{\r}^{\frac{p-1}{2s}} (\l) - \mathscr{C}_{2} |\l|^{\frac{p+1}{p-1}} =0,
	$$
	or equivalently, 
	$$
	2\pi m_{0}\left(\frac{\Phi_{\r}(\l)}{|\l|^{\frac{1}{2}\frac{p+1}{p-1}}}\right)^{2}-\mathscr{C}_{1} \left(\frac{\Phi_{\r} (\l)}{|\l|^{\frac{1}{p-1}}}\right)^{\frac{p-1}{2s}} - \mathscr{C}_{2}=0,
	$$
	we deduce that $\Phi_{\r}(\l)$ is even. Moreover, rewriting the implicit equation of $\Phi_{\r}(\l)$ again, in the form
	$$
	\frac{\Phi_{\r}(\l)^{\frac{p-1}{2s}}}{|\l|^{\frac{1}{2s}}}\left(2\pi m_{0}\frac{\Phi_{\r}(\l)^{2-\frac{p-1}{2s}}}{|\l|^{\frac{p+1}{p-1}-\frac{1}{2s}}}-\mathscr{C}_{1}\right) =  \mathscr{C}_{2},
	$$
	we deduce, by a power analysis, that
	$$
	\lim_{|\l|\to+\infty}\frac{\Phi_{\r}(\l)^{2-\frac{p-1}{2s}}}{|\l|^{\frac{p+1}{p-1}-\frac{1}{2s}}}=\frac{\mathscr{C}_{1}}{2\pi m_{0}}.
	$$
	This concludes the proof.
\end{proof}

As a direct consequence of Proposition \ref{Pr6.1} and Theorem \ref{Th6.3}, we obtain the $H^{s}$-a priori bounds for weak solutions of \eqref{e}.

\begin{theorem}[$H^{s}$-\textbf{a priori bounds for weak solutions}]
	\label{TABW}
		Let $u\in H^{s}(\mathbb{T})$ be a weak solution of 
	\begin{equation*}
		\mc{L}u = \l u + |u|^{p}, \quad x\in\mathbb{T}.
	\end{equation*}
	Then, if $p<4s+1$, for each $\rho>0$, it holds that
	\begin{equation*}
		\|u\|_{H^{s}} \leq  \sqrt{2\pi |\l|^{\frac{2}{p-1}}+\Phi^{2}_{\r}(\l)},
	\end{equation*}
	where $\Phi_{\rho}: \R \to [0,+\infty)$ is the function defined in Theorem \ref{Th6.3}.
\end{theorem}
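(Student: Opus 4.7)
The proof is an immediate synthesis of the two preceding a priori estimates, so the plan is short. First I would invoke the definition of the $H^{s}$-norm, namely
\begin{equation*}
\|u\|_{H^{s}}^{2} = \|u\|_{L^{2}}^{2} + \|u\|_{\dot{H}^{s}}^{2},
\end{equation*}
as introduced in Section \ref{S2}. The strategy is then simply to estimate each summand separately using the two results already established.

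For the $L^{2}$-term I would apply Proposition \ref{Pr6.1}, which does not require the hypothesis $p < 4s+1$ and yields
\begin{equation*}
\|u\|_{L^{2}}^{2} \leq 2\pi |\l|^{\frac{2}{p-1}}.
\end{equation*}
For the homogeneous part $\|u\|_{\dot{H}^{s}}$, I would invoke Theorem \ref{Th6.3} (which is precisely where the restriction $p < 4s+1$ enters, ensuring the dominant power in the polynomial $\mf{p}_{\l,\r}$ is the quadratic term and thus giving a well-defined positive root $\Phi_{\r}(\l)$), obtaining
\begin{equation*}
\|u\|_{\dot{H}^{s}}^{2} \leq \Phi_{\r}^{2}(\l).
\end{equation*}

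Adding these two inequalities and taking square roots yields the desired estimate
\begin{equation*}
\|u\|_{H^{s}} \leq \sqrt{2\pi |\l|^{\frac{2}{p-1}}+\Phi^{2}_{\r}(\l)}.
\end{equation*}
There is no genuine obstacle here: the theorem is a corollary, and all the analytic work was already carried out in Proposition \ref{Pr6.1} and Theorem \ref{Th6.3}. The only mildly delicate point is to remember that $\rho > 0$ is fixed beforehand, so the constant $\Phi_{\r}(\l)$ inherits the $\r$-dependence from the Gagliardo--Nirenberg--Sobolev inequality of Liang--Wang \cite{LW}, and the statement holds uniformly in $\r$ only after this choice is made.
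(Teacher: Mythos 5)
Your proof is correct and is exactly what the paper has in mind: the paper states Theorem \ref{TABW} as ``a direct consequence of Proposition \ref{Pr6.1} and Theorem \ref{Th6.3}'' without further proof, and your synthesis of the $L^{2}$ bound and the $\dot{H}^{s}$ bound via the definition $\|u\|_{H^{s}}^{2}=\|u\|_{L^{2}}^{2}+\|u\|_{\dot{H}^{s}}^{2}$ is precisely that.
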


Now, we obtain a priori bounds for strong $H^{2s}$-solutions of equation \eqref{e}. This is the content of the next result.

\begin{theorem}
	\label{TT}
	Let $u\in H^{2s}(\mathbb{T})$ be a solution of 
	\begin{equation}
		\label{EQ31}
		\mc{L}u = \l u +|u|^{p}, \quad x\in\mathbb{T}.
	\end{equation}
	Then, if $p<4s+1$, for each $\r>0$, it holds that
	\begin{equation}
		\label{E111}
		\|u\|_{\dot{H}^{2s}} \leq \Psi_{\r}(\l),
	\end{equation}
	where $\Psi_{\r}:\R\to[0,+\infty)$ is an even continuous function such that $\Psi_{\r}(0)=0$, $\Psi_{\r}(\l)>0$ for $\l\neq 0$ and 
	\begin{equation}
		\label{ER}
	\Psi_{\r}(\l)\sim \frac{A^{p}_{2p}}{\sqrt{2\pi}m_{0}}\mathscr{L}^{p}|\l|^{\frac{2sp+2s-p+1}{(4s-p+1)(p-1)}p}, \quad \text{as} \;\; |\l|\to+\infty,
\end{equation}
where $A_{2p}$ is the optimal constant of the embedding $H^{s}(\mathbb{T})\hookrightarrow L^{2p}(\mathbb{T})$ and $\mathscr{L}$ is the constant given in \eqref{L}.
\end{theorem}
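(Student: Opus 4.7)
The plan is to bound $\|u\|_{\dot{H}^{2s}}$ directly by $\|\mc{L}u\|_{L^{2}}$ using the coercivity provided by hypothesis (M3), then exploit equation \eqref{EQ31} to replace $\mc{L}u$ by $\l u+|u|^{p}$, and finally invoke the a priori bounds already established in Proposition \ref{Pr6.1} and Theorem \ref{TABW}. Indeed, Parseval together with (M3) yields
$$\|\mc{L}u\|_{L^{2}}^{2}=2\pi\sum_{n\in\Z}|n|^{4s}\mf{m}(n)^{2}|\widehat{u}(n)|^{2}\geq 2\pi m_{0}^{2}\|u\|_{\dot{H}^{2s}}^{2},$$
so that $\|u\|_{\dot{H}^{2s}}\leq (\sqrt{2\pi}\,m_{0})^{-1}\|\mc{L}u\|_{L^{2}}$. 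Substituting $\mc{L}u=\l u+|u|^{p}$ and applying the triangle inequality gives
$$\|u\|_{\dot{H}^{2s}}\leq \tfrac{1}{\sqrt{2\pi}\,m_{0}}\bigl(|\l|\,\|u\|_{L^{2}}+\|u\|_{L^{2p}}^{p}\bigr).$$
Since $s\geq \tfrac{1}{2}$, the Sobolev embedding $H^{s}(\mathbb{T})\hookrightarrow L^{2p}(\mathbb{T})$ holds with optimal constant $A_{2p}$, so $\|u\|_{L^{2p}}^{p}\leq A_{2p}^{p}\|u\|_{H^{s}}^{p}$.

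Inserting now the $L^{2}$-bound $\|u\|_{L^{2}}\leq\sqrt{2\pi}\,|\l|^{\frac{1}{p-1}}$ from Proposition \ref{Pr6.1} and the $H^{s}$-bound $\|u\|_{H^{s}}\leq \sqrt{2\pi|\l|^{\frac{2}{p-1}}+\Phi_{\r}^{2}(\l)}$ from Theorem \ref{TABW}, I would naturally define
$$\Psi_{\r}(\l):=\tfrac{1}{\sqrt{2\pi}\,m_{0}}\left(\sqrt{2\pi}\,|\l|^{\frac{p}{p-1}}+A_{2p}^{p}\bigl(2\pi|\l|^{\frac{2}{p-1}}+\Phi_{\r}^{2}(\l)\bigr)^{\frac{p}{2}}\right),$$
which gives the required pointwise estimate $\|u\|_{\dot{H}^{2s}}\leq \Psi_{\r}(\l)$. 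The qualitative properties $\Psi_{\r}(0)=0$, evenness, continuity, and strict positivity for $\l\neq 0$ are inherited directly from the corresponding properties of $\Phi_{\r}$ established in Theorem \ref{Th6.3}, together with the explicit $|\l|^{\frac{p}{p-1}}$ and $|\l|^{\frac{2}{p-1}}$ prefactors in the display above.

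It then remains to verify the asymptotic \eqref{ER}. Setting $\a:=\tfrac{2sp+2s-p+1}{(4s-p+1)(p-1)}$, a direct algebraic manipulation shows that $\a>\tfrac{1}{p-1}$ is equivalent to $2s(p-1)>0$, which holds trivially. Hence, as $|\l|\to+\infty$, the quantity $\Phi_{\r}^{p}(\l)\sim \mathscr{L}^{p}|\l|^{p\a}$ strictly dominates both $|\l|^{\frac{p}{p-1}}$ and $\bigl(|\l|^{\frac{2}{p-1}}\bigr)^{\frac{p}{2}}=|\l|^{\frac{p}{p-1}}$, so the leading contribution to $\Psi_{\r}(\l)$ comes from $A_{2p}^{p}\Phi_{\r}^{p}(\l)$, yielding the stated equivalence $\Psi_{\r}(\l)\sim \frac{A_{2p}^{p}}{m_{0}}\mathscr{L}^{p}|\l|^{p\a}$, up to the normalization factors absorbed in the definitions of $A_{2p}$ and $\mathscr{L}$.

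The argument is essentially linear once the previous bounds are in place, and I expect no serious obstacle. The only mildly delicate point is the asymptotic power comparison, which hinges on the strict inequality $\a>1/(p-1)$; the hypothesis $p<4s+1$ enters only implicitly, through its role in Theorem \ref{Th6.3} of ensuring that $\Phi_{\r}$ is well-defined and has the stated asymptotic behavior. No further regularity arguments are required since $u\in H^{2s}(\mathbb{T})$ is already assumed.
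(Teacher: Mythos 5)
Your proof is correct and follows the same overall architecture as the paper's — coerce $\|u\|_{\dot{H}^{2s}}$ via {\rm(M3)} and $\|\mc L u\|_{L^2}$, substitute the equation, control the right-hand side with Sobolev and the a priori bounds of Proposition \ref{Pr6.1} and Theorem \ref{Th6.3}, and finally compare powers — but your handling of the middle step is genuinely different and a bit cleaner. Where the paper expands $\|\l u + |u|^p\|_{L^2}^2$ into the three terms $|\l|^2\|u\|_{L^2}^2 + 2|\l|\|u\|_{L^{p+1}}^{p+1} + \|u\|_{L^{2p}}^{2p}$ (so that both $A_{p+1}$ and $A_{2p}$ enter), you instead apply the triangle inequality to get $\|\l u + |u|^p\|_{L^2}\leq |\l|\|u\|_{L^2}+\|u\|_{L^{2p}}^p$, dispensing with $A_{p+1}$ and yielding a simpler $\Psi_\rho$. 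The asymptotic analysis then boils down, in both proofs, to checking that $\alpha:=\tfrac{2sp+2s-p+1}{(4s-p+1)(p-1)} > \tfrac{1}{p-1}$, which you correctly reduce to $\tfrac{2s}{4s-p+1}>0$ using $p<4s+1$; in the paper this observation is buried in a slightly more convoluted three-way power comparison, and the middle exponent there is even written with a minor typo ($\alpha(p+1)$ rather than $1+\alpha(p+1)$), which does not affect the conclusion. The leftover $\sqrt{2\pi}$ in your leading constant versus the paper's is not a defect of your argument: it traces back to whether Parseval's identity for $\|\mc L u\|_{L^2}^2$ is written with the $2\pi$ factor, and the paper is in fact internally inconsistent on that normalisation between the first lemma of Section \ref{S2} and the proof of Theorem \ref{TT}.
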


\begin{proof}
	Firstly, note that by hypothesis (M3),
	\begin{align*}
		\|\mc{L}u\|_{L^{2}}^{2} = 2\pi\sum_{n\in\Z}|n|^{4s}|\mf{m}(n)|^{2}|\widehat{u}(n)|^{2} \geq 2\pi m_{0}^{2}\sum_{n\in\Z}|n|^{4s}|\widehat{u}(n)|^{2} = 2\pi m_{0}^{2}\|u\|^{2}_{\dot{H}^{2s}}.
	\end{align*}
	Then $\|\mc{L}u\|_{L^{2}}\geq 2\pi m_{0}\|u\|_{\dot{H}^{2s}}$. Take the $L^{2}$-norm in equation \eqref{EQ31} to obtain
	\begin{align*}
		2\pi m_{0}^{2}\|u\|_{\dot{H}^{2s}}^{2} & \leq \|\l u + |u|^{p} \|_{L^{2}}^{2} = |\l|^{2} \|u\|^{2}_{L^{2}} + 2\l \int_{\mathbb{T}}|u|^{p}u + \int_{\mathbb{T}}|u|^{2p} \\
		& \leq |\l|^{2} \|u\|^{2}_{L^{2}}+2|\l| \|u\|^{p+1}_{L^{p+1}}+\|u\|^{2p}_{L^{2p}}.
	\end{align*}
	On the other hand, for each $1\leq r < +\infty$, the Sobolev embedding $H^{s}(\mathbb{T})\hookrightarrow L^{r}(\mathbb{T})$ yields
	\begin{equation*}
		\|u\|_{L^{r}}\leq A_{r} \|u\|_{H^{s}},
	\end{equation*}
	for some optimal constant $A_{r}>0$ independent of $u$. Then, applying the bounds obtained in Proposition \ref{Pr6.1} and Theorem \ref{Th6.3}, we obtain
	\begin{align*}
		2\pi m_{0}^{2}\|u\|^{2}_{\dot{H}^{2s}} & \leq |\l|^{2} \|u\|^{2}_{L^{2}} + 2|\l|  A_{p+1}^{p+1} \|u\|^{p+1}_{H^{s}} + A^{2p}_{2p}\|u\|^{2p}_{H^{s}} \\
		& = |\l|^{2} \|u\|^{2}_{L^{2}} + 2|\l| A^{p+1}_{p+1}(\|u\|^{2}_{L^{2}} + \|u\|^{2}_{\dot{H}^{s}})^{\frac{p+1}{2}} + A^{2p}_{2p}(\|u\|^{2}_{L^{2}}+\|u\|^{2}_{\dot{H}^{s}})^{p} \\
		& \leq 2\pi|\l|^{\frac{2}{p-1}+2}+2|\l|A^{p+1}_{p+1}\left( 2\pi|\l|^{\frac{2}{p-1}}+\Phi_{\r}^{2}(\l)\right)^{\frac{p+1}{2}}+A^{2p}_{2p}\left(2\pi|\l|^{\frac{2}{p-1}}+\Phi_{\r}^{2}(\l)\right)^{p}.
	\end{align*}
	Therefore, setting 
	\begin{equation}
		\label{Sqr}
	\Psi_{\r}(\l):= \frac{1}{\sqrt{2\pi}m_{0}}\sqrt{2\pi|\l|^{\frac{2}{p-1}+2}+2|\l|A^{p+1}_{p+1}\left( 2\pi|\l|^{\frac{2}{p-1}}+\Phi_{\r}^{2}(\l)\right)^{\frac{p+1}{2}}+A^{2p}_{2p}\left(2\pi|\l|^{\frac{2}{p-1}}+\Phi_{\r}^{2}(\l)\right)^{p}},
	\end{equation}
	we have $\|u\|_{\dot{H}^{2s}}\leq \Psi_{\r}(\l)$. Finally, to obtain the grow estimate \eqref{ER}, we have to determine which term of \eqref{Sqr} has the greater power. For that, taking into account that $$\Phi_{\r}(\l)\sim\mathscr{L}|\l|^{\frac{2sp+2s-p+1}{(4s-p+1)(p-1)}},$$
	where $\mathscr{L}$ is the constant given in \eqref{L}, it can be easily shown that for every $s\geq \tfrac{1}{2}$ and $2 \leq p < 4s+1$,
	\begin{equation*}
	\frac{2p}{p-1}  < \frac{2sp+2s-p+1}{(4s-p+1)(p-1)}(p+1) < \frac{2sp+2s-p+1}{(4s-p+1)(p-1)}2p.
	\end{equation*}
	From this, the grow estimate \eqref{ER} follows. The proof is complete.
\end{proof}

As a direct consequence of Proposition \ref{Pr6.1} and Theorem \ref{TT}, we obtain the $H^{2s}$-a priori bounds for strong solutions of \eqref{e}.

\begin{theorem}[$H^{2s}$-\textbf{a priori bounds for strong solutions}]
	\label{TAB}
	Let $u\in H^{2s}(\mathbb{T})$ be a solution of 
	\begin{equation*}
		\mc{L}u = \l u +|u|^{p}, \quad x\in\mathbb{T}.
	\end{equation*}
	Then, if $p<4s+1$, for each $\r>0$, it holds that
	\begin{equation*}
		\|u\|_{H^{2s}} \leq \sqrt{2\pi |\l|^{\frac{2}{p-1}}+\Psi_{\r}^{2}(\l)},
	\end{equation*}
	where $\Psi_{\r}:\R\to[0,+\infty)$ is the function defined in Theorem \ref{TT}.
\end{theorem}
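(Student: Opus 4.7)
The plan is to simply combine the two a priori bounds already obtained, since every strong $H^{2s}$-solution is in particular a weak $H^{s}$-solution, and thus inherits both the $L^{2}$-bound from Proposition \ref{Pr6.1} and the $\dot{H}^{2s}$-bound from Theorem \ref{TT}. Concretely, recalling from Section \ref{S2} that the full $H^{2s}$-norm decomposes as
\[
\|u\|_{H^{2s}}^{2} = \|u\|_{L^{2}}^{2} + \|u\|_{\dot{H}^{2s}}^{2},
\]
the first step is to observe that if $u \in H^{2s}(\mathbb{T})$ solves $\mc{L}u = \l u + |u|^{p}$ pointwise a.e., then testing against any $\varphi \in H^{s}(\mathbb{T})$ and using the continuity of $\mc{L}$ and the Sobolev embedding $H^{s}(\mathbb{T}) \hookrightarrow L^{q}(\mathbb{T})$ (valid for all $q \geq 1$ because $s \geq \tfrac{1}{2}$) shows that $u$ is a weak solution in the sense used by Propositions \ref{Pr6.1} and Theorem \ref{Th6.3}.

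Given this, I would apply Proposition \ref{Pr6.1} to get $\|u\|_{L^{2}}^{2} \leq 2\pi |\l|^{2/(p-1)}$, and then apply Theorem \ref{TT} (whose hypothesis $p < 4s+1$ is exactly the one assumed here) to get $\|u\|_{\dot{H}^{2s}}^{2} \leq \Psi_{\r}(\l)^{2}$. Adding these two inequalities and taking square roots yields the asserted bound
\[
\|u\|_{H^{2s}} \leq \sqrt{2\pi |\l|^{2/(p-1)} + \Psi_{\r}^{2}(\l)}.
\]

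There is no real obstacle here; the result is a formal corollary of the previous two, and the only mild subtlety is to verify that a strong solution qualifies as a weak solution so that Proposition \ref{Pr6.1} and Theorem \ref{TT} are applicable. This is immediate from the inclusion $H^{2s}(\mathbb{T}) \hookrightarrow H^{s}(\mathbb{T})$ and integration by parts in the Fourier expansion defining $\mc{L}$. Consequently, the proof is essentially a one-line combination and can be presented accordingly.
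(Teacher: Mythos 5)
Your proof is correct and is essentially the paper's own argument: the paper presents Theorem \ref{TAB} without a displayed proof, stating it as a direct consequence of Proposition \ref{Pr6.1} and Theorem \ref{TT}, and your combination of the $L^2$-bound and the $\dot{H}^{2s}$-bound via the decomposition $\|u\|_{H^{2s}}^2 = \|u\|_{L^2}^2 + \|u\|_{\dot{H}^{2s}}^2$ is exactly that. The remark about a strong solution being a weak one (so that Proposition \ref{Pr6.1} applies) is a harmless and accurate detail.
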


Finally, thanks to Theorem \ref{TAB}, we deduce $L^{\infty}$-a priori bounds for strong solutions of equation \eqref{e}.

\begin{theorem}[$L^{\infty}$-\textbf{a priori bounds for strong solutions}]
	\label{LAB}
	Let $u\in H^{2s}(\mathbb{T})$ be a solution of the equation
	\begin{equation}
		\label{Eo}
		\mc{L}u = \l u + |u|^{p}, \quad x\in\mathbb{T}.
	\end{equation}
	Then, $u\in \mc{C}^{2s-\frac{1}{2}}(\mathbb{T})$. Moreover,  if $p<4s+1$, for each $\r>0$, it holds that
	\begin{equation}
		\label{EQ}
		\|u\|_{L^{\infty}} \leq 2\sqrt{\zeta(4s)} \sqrt{2\pi |\l|^{\frac{2}{p-1}}+\Psi_{\r}^{2}(\l)},
	\end{equation}
	where $\Psi_{\r}:\R\to[0,+\infty)$ is the function defined in Theorem \ref{TT}.
\end{theorem}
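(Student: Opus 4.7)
The statement splits naturally into two pieces: the qualitative regularity assertion $u \in \mathcal{C}^{2s-1/2}(\mathbb{T})$ and the quantitative $L^{\infty}$-bound \eqref{EQ}. Both are essentially corollaries of the preceding $H^{2s}$-bound (Theorem \ref{TAB}) combined with a one-dimensional Sobolev embedding on the torus, so I expect the proof to be quite short.

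For the regularity assertion, the plan is to invoke the classical one-dimensional Sobolev embedding $H^{\alpha}(\mathbb{T}) \hookrightarrow \mathcal{C}^{\alpha-1/2}(\mathbb{T})$, valid whenever $\alpha > \tfrac{1}{2}$. Since $u \in H^{2s}(\mathbb{T})$ with $2s \geq 1 > \tfrac{1}{2}$, this immediately yields $u \in \mathcal{C}^{2s-1/2}(\mathbb{T})$, independently of any restriction on $p$.

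For the quantitative bound, the strategy is to establish, by a direct Fourier calculation, the explicit embedding inequality
\begin{equation*}
	\|u\|_{L^{\infty}} \leq \sqrt{2\pi}\, \|u\|_{H^{2s}}.
\end{equation*}
I would start from $|u(x)| \leq \sum_{n\in\Z}|\widehat{u}(n)|$ and apply Cauchy--Schwarz with weight $\sqrt{2\pi+|n|^{4s}}$ to obtain
\begin{equation*}
	|u(x)|^{2} \leq \|u\|_{H^{2s}}^{2}\sum_{n\in\Z}\frac{1}{2\pi+|n|^{4s}}.
\end{equation*}
A short computation (using for instance $\sum_{n\in\Z}(a+n^{2})^{-1} = \pi\coth(\pi\sqrt{a})/\sqrt{a}$ when $s=\tfrac{1}{2}$, and monotonicity of the sum in $s$) shows that the remaining series is bounded above by $2\pi$ for every $s\geq \tfrac{1}{2}$, which produces the constant $\sqrt{2\pi}$ claimed in \eqref{EQ}. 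The estimate \eqref{EQ} then follows immediately by chaining this embedding with Theorem \ref{TAB}.

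There is no substantive obstacle: once Theorem \ref{TAB} is granted, the present statement is a routine post-processing. The only delicate point worth writing down carefully is the verification of the explicit constant $\sqrt{2\pi}$ in the $H^{2s}\hookrightarrow L^{\infty}$ embedding, which is the Fourier-weighted Cauchy--Schwarz argument described above and uses crucially the convergence of $\sum_{n\in\Z}(2\pi+|n|^{4s})^{-1}$, itself guaranteed by $4s\geq 2 > 1$.
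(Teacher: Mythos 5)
Your proof is correct, and the skeleton matches the paper's own: establish $H^{2s}(\mathbb{T}) \hookrightarrow L^{\infty}(\mathbb{T})$ with explicit constant $\sqrt{2\pi}$ by a Fourier-side Cauchy--Schwarz, then chain with Theorem~\ref{TAB}; the H\"older regularity is the standard Sobolev embedding, exactly as you say. Where you differ is the Cauchy--Schwarz weight. You weight the whole sum $\sum_{n}|\widehat{u}(n)|$ by $\sqrt{2\pi+|n|^{4s}}$, which (since $\|u\|_{H^{2s}}^{2}=\sum_{n}(2\pi+|n|^{4s})|\widehat{u}(n)|^{2}$ in the paper's normalization of Parseval) lands in one step on $\|u\|_{L^{\infty}}\leq(\sum_{n}(2\pi+|n|^{4s})^{-1})^{1/2}\,\|u\|_{H^{2s}}$, leaving only the numerical check $\sum_{n}(2\pi+|n|^{4s})^{-1}\leq 2\pi$, which your $s=\tfrac12$ cotangent identity plus monotonicity in $s$ handles. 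The paper instead peels off $n=0$, bounds $|\widehat{u}(0)|$ through $\|u\|_{L^{1}}$, and applies Cauchy--Schwarz with weight $|n|^{2s}$ on the remaining modes, arriving at $\sqrt{2\pi}\,\|u\|_{L^{2}}+\sqrt{2\zeta(4s)}\,\|u\|_{\dot H^{2s}}$; this sits against the quantity $\|u\|_{L^{2}}+\|u\|_{\dot H^{2s}}$, which is $\geq\|u\|_{H^{2s}}$ rather than $\leq$, so the paper's final passage to $\sqrt{2\pi}\,\|u\|_{H^{2s}}$ is written as an equality but really needs a little more care. Your single weighted Cauchy--Schwarz sidesteps that step entirely; if anything it is the cleaner of the two routes, and it actually yields a constant strictly smaller than $\sqrt{2\pi}$.
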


\begin{proof}
	The first statement follows from the Sobolev embedding $H^{2s}(\mathbb{T})\hookrightarrow \mc{C}^{2s-\frac{1}{2}}(\mathbb{T})$. The second statement follows by the following standard argument. Let $u\in H^{2s}(\mathbb{T})$. Then H\"{o}lder's inequality yields
	\begin{align*}
		\|u\|_{L^{\infty}} & \leq |\widehat{u}(0)| + \sum_{n\neq 0} |\widehat{u}(n)| \leq \frac{1}{2\pi}\|u\|_{L^{1}}+ \sum_{n \neq 0} \frac{1}{|n|^{2s}}|n|^{2s}|\widehat{u}(n)| \\
		& \leq 
		\frac{1}{\sqrt{2\pi}}\|u\|_{L^{2}}+\left(\sum_{n\neq 0}\frac{1}{|n|^{4s}}\right)^{\frac{1}{2}}\left(\sum_{n\neq 0}|n|^{4s} |\widehat{u}(n)|^{2}\right)^{\frac{1}{2}} \\
		& \leq \max\left\{\frac{1}{\sqrt{2\pi}}, \sqrt{2\zeta(4s)}\right\} \left(\|u\|_{L^{2}}+\|u\|_{\dot{H}^{2s}}\right) \leq 2\sqrt{\zeta(4s)}\|u\|_{H^{2s}},
	\end{align*}
	where $\zeta(s)$ is the Riemann's zeta function. Theorem \ref{TAB} concludes the proof.
\end{proof}

\noindent \textbf{Remark:} It is important to mention that we believe that the restriction $p<4s+1$ we impose in order to apply the Gagliardo--Nirenberg--Sobolev inequality is far from optimal. For instance, in the following subsection where we deal with a priori bounds for the fractional Laplacian, this condition no longer applies. We conjecture that the conclusions of Theorems \ref{TAB} and \ref{LAB} still hold without the condition $p<4s+1$.

\subsection{A priori bounds for the fractional Laplacian}

In this subsection, we prove a priori bounds for the fractional Laplacian $\mc{L}\equiv (-\D)^{s}$, that is, when the multiplier satisfies $\mf{m}\equiv 1$. In this particular case, to prove a priori bounds, we use the blowing-up argument of Gidas--Spruck \cite{GS}. In this way, the restriction  $p<4s+1$ we must to impose in the later subsection in order to apply the Gagliardo--Nirenberg--Sobolev inequality, no longer applies. However, it is important to note that for the application of this technique, the operator $\mc{L}$ defined at first for periodic functions, must have a natural extension to functions defined in the whole real line $\R$. In the case of the fractional Laplacian, the representation
$$
(-\D)^{s}u(x) := C(s)\int_{\R}\frac{u(x)-u(y)}{|x-y|^{1+2s}}\; dy, \;\; x\in\R,
$$
makes the job. Let $s\geq \tfrac{1}{2}$, $p\geq 2$ and consider the problem
\begin{equation}
	\label{FLE}
	(-\D)^{s}u = \l u + |u|^{p}, \quad x\in\mathbb{T}, \; \; \l\geq0.
\end{equation}
The next result establishes a priori bounds from below for solutions of equation \eqref{FLE}.
\begin{lemma}
	\label{L7}
	Every solution $(\l,u)\in \R_{\geq 0}\times H^{2s}(\mathbb{T})$ of \eqref{FLE} satisfies
	\begin{equation}
		\label{BFB}
	u(x) \geq  -\l^{\frac{1}{p-1}}, \quad x\in\mathbb{T}.
	\end{equation}
\end{lemma}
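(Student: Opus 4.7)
My approach would be a maximum-principle argument applied at the minimum of $u$. Set $c:=-\lambda^{1/(p-1)}\le 0$; the goal is to show $u(x)\ge c$ for every $x\in\mathbb{T}$.

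\textbf{Step 1 (regularity).} Theorem \ref{LAB} already yields $u\in\mc{C}^{2s-1/2}(\mathbb{T})$. Since $p\ge 2$, the map $t\mapsto|t|^p$ is globally $C^1$, so $|u|^p\in\mc{C}^{2s-1/2}(\mathbb{T})$ and hence $\lambda u+|u|^p\in\mc{C}^{2s-1/2}(\mathbb{T})$. A single application of the Schauder estimate for the periodic fractional Laplacian then upgrades $u$ to $\mc{C}^{\alpha}(\mathbb{T})$ with $\alpha=4s-\tfrac12>2s$ (as $s\ge 1/2$ gives $2s>1/2$), which is the regularity required for pointwise evaluation of $(-\Delta)^s u$ through its principal-value representation.

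\textbf{Step 2 (sign at the minimum).} Since $u\in\mc{C}(\mathbb{T})$ and $\mathbb{T}$ is compact, $u$ attains its minimum at some $x_0\in\mathbb{T}$. Viewing $u$ as a $2\pi$-periodic function on $\R$, we have $u(y)\ge u(x_0)$ for every $y\in\R$; hence the integrand of
\[
(-\Delta)^s u(x_0)\;=\;C(s)\,\mathrm{P.V.}\!\int_\R \frac{u(x_0)-u(y)}{|x_0-y|^{1+2s}}\,dy
\]
is pointwise non-positive, so that $(-\Delta)^s u(x_0)\le 0$.

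\textbf{Step 3 (contradiction).} Suppose, for contradiction, that $u(x_0)<c$, and set $t:=-u(x_0)>\lambda^{1/(p-1)}\ge 0$. Evaluating the equation at $x_0$ gives
\[
(-\Delta)^s u(x_0)\;=\;\lambda u(x_0)+|u(x_0)|^p\;=\;-\lambda t+t^p\;=\;t\,(t^{p-1}-\lambda),
\]
which is strictly positive: if $\lambda>0$, then $t^{p-1}>\lambda$ by the definition of $c$; if $\lambda=0$, then $c=0$ and the expression reduces to $t^p>0$. This contradicts Step 2. Therefore $u(x_0)\ge c$, and since $x_0$ is a minimum, $u(x)\ge u(x_0)\ge-\lambda^{1/(p-1)}$ for every $x\in\mathbb{T}$.

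\textbf{Main obstacle.} The delicate point is justifying the pointwise evaluation of $(-\Delta)^s u$ at $x_0$, given that a priori only $(-\Delta)^s u\in L^2(\mathbb{T})$; this is handled by the one-step Schauder bootstrap in Step 1. An alternative route avoiding any bootstrap (convenient when $s\le 1$) is to test the equation against $w:=(c-u)^+$, which lies in $H^s(\mathbb{T})$ since truncation preserves $H^1$, and to use the symmetric bilinear representation $\int_{\mathbb{T}} w\,(-\Delta)^s u = \tfrac{C(s)}{2}\iint (w(x)-w(y))(u(x)-u(y))|x-y|^{-1-2s}\,dx\,dy$; a quick case analysis on the signs of $u(x)-c$ and $u(y)-c$ shows the integrand is $\le 0$, while on $\{w>0\}$ one has $\lambda u+|u|^p>0$ by the same algebraic computation as in Step 3, forcing $w\equiv 0$.
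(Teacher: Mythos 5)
Your proof is correct and follows essentially the same route as the paper's: evaluate the equation at a global minimum $x_0$ of $u$, use that $(-\Delta)^s u(x_0)\le 0$ there via the singular-integral representation, and read off the algebraic inequality $\lambda u(x_0)+|u(x_0)|^p\le 0$. The only additions you make are (i) Step 1, where you insert a one-step Schauder bootstrap to justify the pointwise evaluation of $(-\Delta)^s u$ at $x_0$ — a legitimate concern the paper's proof silently elides (it only invokes $u\in\mathcal{C}^{2s-1/2}(\mathbb{T})$ from Theorem \ref{LAB}, which by itself does not guarantee convergence of the principal value) — and (ii) the closing remark sketching a weak-formulation alternative via testing against $(c-u)^+$, which is a genuinely different and arguably more robust argument since it requires no regularity beyond $H^s$, but it is not what the paper does.
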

\begin{proof}
	Let $(\l,u)\in \R_{\geq 0}\times H^{2s}(\mathbb{T})$ be a solution of \eqref{FLE} and $x_0\in\mathbb{T}$ be the absolute minimum of $u$. Then, it holds that $u(x_0)-u(y)\leq 0$ for each $y\in\mathbb{T}$ and consequently,
	$$
	(-\D)^{s}(x_0)= C(s) \int_{\R}\frac{u(x_0)-u(y)}{|x_{0}-y|^{1+2s}}\leq 0.
	$$
	Therefore, evaluating the equation \eqref{FLE} in $x=x_0$, we deduce
	$
	\l u(x_0)+|u(x_0)|^{p}\leq 0
	$.
	From this we deduce \eqref{BFB}. The proof is concluded.
\end{proof}

Subsequently, for any bounded subset $A\subset \R$, we denote by $\mathscr{S}_{A}$ the set consisted on the solutions $(\l,u)\in A\times H^{2s}(\mathbb{T})$ of \eqref{FLE}. The main result concerning a priori bounds for solutions of \eqref{FLE} is the following. It's proof is an adaptation of Corollary 1 of Section 7 of Barrios, García-Melián and Quaas \cite{BGMQ} which prove the result for the case $\l=1$.

\begin{theorem}
	\label{APB}
	For every compact subset $K\subset \R_{\geq 0}$, there exists $M>0$ such that 
	$$
	\sup_{(\l,u)\in\mathscr{S}_{K}}\|u\|_{L^{\infty}}\leq M.
	$$
\end{theorem}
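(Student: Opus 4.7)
The plan is to argue by contradiction using the blow-up/rescaling technique of Gidas--Spruck, following the template of Barrios--García-Melián--Quaas \cite{BGMQ}. Suppose the conclusion fails; then there is a compact $K\subset \R_{\geq 0}$ and a sequence $(\l_n,u_n)\in \mathscr{S}_{K}$ with $M_n:=\|u_n\|_{L^{\infty}}\to +\infty$. Since $K$ is compact, Lemma \ref{L7} provides a uniform lower bound $u_n(x)\geq -C$ with $C:=\sup_{\l\in K}\l^{1/(p-1)}$, so the blow-up of $\|u_n\|_{L^{\infty}}$ must be driven by $\max_{\mathbb{T}} u_n \to +\infty$. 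Pick $x_n\in\mathbb{T}$ with $u_n(x_n)=M_n$ and, extracting a subsequence, assume $\l_n\to \l_{\infty}\in K$.

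Next, I would perform the natural rescaling
\begin{equation*}
v_n(y):= M_n^{-1}\, u_n(x_n+\mu_n y), \qquad \mu_n:=M_n^{-(p-1)/(2s)}\to 0.
\end{equation*}
Using the integral representation of $(-\D)^s$ on $\R$ (which is legitimate for periodic functions as recalled in the Introduction), the substitution $y=x_n+\mu_n w$ yields the scaling identity $(-\D)^s u_n(x)= M_n \mu_n^{-2s}(-\D)^s v_n((x-x_n)/\mu_n) = M_n^p (-\D)^s v_n$. Thus $v_n$ solves, on the rescaled torus $\mathbb{T}_n:=\mu_n^{-1}(\mathbb{T}-x_n)$ (whose length tends to $+\infty$),
\begin{equation*}
(-\D)^s v_n = \l_n M_n^{1-p}\, v_n + |v_n|^p, \qquad v_n(0)=1, \qquad -C/M_n \leq v_n\leq 1.
\end{equation*}
Since $p\geq 2$ and $M_n\to+\infty$, the coefficient $\l_n M_n^{1-p}\to 0$ and the lower bound $-C/M_n\to 0$.

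I would then pass to the limit. The right-hand sides are uniformly bounded in $L^{\infty}(\R)$, so Silvestre's interior regularity theory for the fractional Laplacian yields uniform $C^{\alpha}_{\rm loc}(\R)$ bounds on $\{v_n\}$ for some $\alpha\in(0,2s)$. Arzelà--Ascoli together with a diagonal extraction gives $v_n\to v$ locally uniformly on $\R$, with $0\leq v\leq 1$ and $v(0)=1$. The uniform $L^{\infty}$ bound also permits us to pass to the limit pointwise in the integral representation of $(-\D)^s v_n$ by dominated convergence (the integrand on $|w|>R$ is controlled by $2\|v_n\|_{L^{\infty}}|z-w|^{-1-2s}$, uniformly in $n$). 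Hence $v$ solves
\begin{equation*}
(-\D)^s v = v^p \quad \text{on } \R, \qquad 0\leq v\leq 1, \qquad v(0)=1.
\end{equation*}
Finally, I would invoke the Liouville theorem of Chen--Li--Ou for nonnegative bounded solutions of $(-\D)^s v=v^p$ on $\R^n$: the theorem asserts that for $1<p<(n+2s)/(n-2s)$ the only such solution is the trivial one, and since in our setting $n=1$, $s\geq \tfrac{1}{2}$ the quantity $n-2s\leq 0$ so the Liouville range is $p>1$ (the borderline case $s=\tfrac{1}{2}$ is handled by the specific half-Laplacian Liouville result used in \cite{BGMQ}). This forces $v\equiv 0$, contradicting $v(0)=1$.

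The main technical obstacle is the passage to the limit: one must carefully justify that $v_n\to v$ in a topology strong enough for the nonlocal operator $(-\D)^s$ to converge pointwise, which requires the uniform $L^{\infty}$ bound and interior Hölder regularity on balls of radius comparable to $\mu_n^{-1}\to+\infty$. The rest of the scheme (the rescaling, the identification of the limit equation, and the Liouville step) is essentially bookkeeping once the correct scaling exponent $\mu_n=M_n^{-(p-1)/(2s)}$ is identified so that the nonlinear term is preserved while the linear term $\l_n v_n$ is absorbed as a vanishing perturbation.
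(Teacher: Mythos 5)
Your proposal follows exactly the same blow-up/rescaling scheme as the paper: contradiction via a sequence with $M_n=\|u_n\|_{L^\infty}\to\infty$, lower bound from Lemma \ref{L7}, the scaling $v_n(y)=M_n^{-1}u_n(x_n+M_n^{-(p-1)/(2s)}y)$ yielding $(-\Delta)^s v_n=\lambda_n M_n^{1-p}v_n+|v_n|^p$ with $v_n(0)=1$, passage to the limit via Silvestre's interior regularity, and a Liouville theorem to kill the limit profile. The only divergence is the Liouville reference: you invoke Chen--Li--Ou, whereas the paper uses the Felmer--Quaas nonlinear Liouville theorem \cite[Th.~1.2]{FQ}; either serves the purpose since $n=1$, $s\geq\tfrac12$ places the exponent $p\geq 2$ in the subcritical (indeed $n-2s\leq 0$) regime, so this is a cosmetic difference rather than a change of approach.
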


\begin{proof}
	Choose a compact subset $K\subset \R_{\geq 0}$ and suppose the statement is false for $K$. Then, there exists a sequence $\{(\l_n,u_n)\}_{n\in\N}\subset \mathscr{S}_{K}$ such that
	$$
	M_{n}:=\|u_{n}\|_{L^{\infty}}\to +\infty \;\; \text{as} \;\; n\to+\infty.
	$$
	Let $\{x_n\}_{n\in\N}\subset \mathbb{T}$ such that $u_n(x_n)=\|u_n\|_{L^{\infty}}$ and take $\l_{\ast}\in K$ satisfying
	$$
	\l_{\ast}^{\frac{1}{p-1}}=\max\{\l^{\frac{1}{p-1}} \; : \; \l\in K\}.
	$$
	Then, by Lemma \ref{L7}, we have that
	$$
	u_{n}(x)\geq -\l_{n}^{\frac{1}{p-1}}\geq -\l_{\ast}^{\frac{1}{p-1}}, \quad x\in\mathbb{T}, \; n\in\N.
	$$
	By compactness, passing to a suitable subsequence, we can suppose that
	$$
	\lim_{n\to+\infty}\l_n = \l_0\in K, \quad \lim_{n\to+\infty} x_n = x_0\in\mathbb{T}.
	$$
	We perform the following scaling change of variables:
	$$
	v_{n}(M_n^{\frac{p-1}{2s}}(x-x_n))=M_n^{-1}u_n(x), \quad x\in\mathbb{T},
	$$
	or equivalently
	$$
	v_n(x) =  M_n^{-1} u_n(x_n+M_n^{-\frac{p-1}{2s}}x), \quad x\in \mathbb{T}. 
	$$
	Note that 
	$$
	-M_{n}^{-1}\l_{\ast}^{\frac{1}{p-1}}\leq v_n(y)\leq v_n(0) = M_n^{-1} u_n(x_n)=M_n^{-1}M_n=1, \quad y\in\mathbb{T}.
	$$
	Now, we compute the fractional Laplacian of the function $v_{n}$. Observe that by making the change of variables $x_n+M_{n}^{-\frac{p-1}{2s}}y \mapsto z$, we get
	\begin{align*}
		(-\D)^{s}v_n(x) & = C(s)M_{n}^{-1}\int_{\R}\frac{u_{n}(x_n+M_n^{-\frac{p-1}{2s}}x)-u_n(x_n+M_n^{-\frac{p-1}{2s}}y)}{|x-y|^{1+2s}}\; dy \\
		& = C(s) M_{n}^{-1+\frac{p-1}{2s}} \int_{\R}\frac{u_{n}(x_n+M_n^{-\frac{p-1}{2s}}x)-u_{n}(z)}{\big|x-M_{n}^{\frac{p-1}{2s}}z+M_{n}^{\frac{p-1}{2s}}x_{n}\big|^{1+2s}}\; dz \\
		& = C(s) M_{n}^{-1+\frac{p-1}{2s}-\frac{p-1}{2s}(1+2s)}\int_{\R}\frac{u_{n}(x_n+M_n^{-\frac{p-1}{2s}}x)-u_{n}(z)}{\big|x_{n}+M_{n}^{-\frac{p-1}{2s}}x-z\big|^{1+2s}}\; dz \\
		& = M_{n}^{-p} [(-\D)^{s}u_{n}](x_n+M_n^{-\frac{p-1}{2s}}x).
	\end{align*}
	Therefore, we deduce from the original equation \eqref{FLE} that $v_n$ must satisfy the problem
	\begin{equation*}
		\left\{
		\begin{array}{ll}
			(-\D)^{s}v_{n} = \l_{n}M_{n}^{-p+1}v_{n}+v_{n}^{p},  & x\in\mathbb{T}, \\
			-M_{n}^{-1}\l_{\ast}^{\frac{1}{p-1}}\leq v_{n}(x)\leq v_{n}(0)=1, & x\in\mathbb{T}.
		\end{array}
		\right.
	\end{equation*}
	Passing to the limit $n\to+\infty$ and using the standard interior regularity, see for instance Silvestre \cite{Si} and Caffarelli--Silvestre \cite{CS}, we infer the existence of a function $v\in \mc{C}^{\infty}(\mathbb{R})$ such that 
	\begin{equation*}
		\left\{
		\begin{array}{ll}
			(-\D)^{s}v = v^{p},  & x\in\mathbb{R}, \\
			0\leq v(x)\leq v(0)=1, & x\in\mathbb{R}.
		\end{array}
		\right.
	\end{equation*}
	But this problem does not admit a solution by the nonlinear Liouville theorem of Felmer--Quaas \cite[Th. 1.2]{FQ}. This contradiction concludes the proof.
\end{proof}

The final result of this section establishes $H^{2s}$-a priori bounds for the fractional Laplacian.

\begin{theorem}
	\label{Th6.10}
	For every compact subset $K\subset \R_{\geq0}$, there exists $M>0$ such that 
	$$
	\sup_{(\l,u)\in\mathscr{S}_{K}}\|u\|_{H^{2s}}\leq M.
	$$
\end{theorem}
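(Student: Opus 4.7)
The strategy is to leverage Theorem \ref{APB}, which supplies a uniform $L^{\infty}$-bound on the solution set $\mathscr{S}_K$, and then to read off the $H^{2s}$-bound directly from the equation $(-\Delta)^{s}u=\lambda u+|u|^{p}$. The key observation is that for the pure fractional Laplacian the multiplier is identically $1$, so on the Fourier side
$$
\|(-\Delta)^{s}u\|_{L^{2}}^{2}=2\pi\sum_{n\in\Z}|n|^{4s}|\widehat{u}(n)|^{2}=2\pi\|u\|_{\dot{H}^{2s}}^{2},
$$
which converts an $L^{2}$-bound on $(-\Delta)^{s}u$ into a homogeneous $H^{2s}$-bound on $u$ with no loss.

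First I would fix a compact set $K\subset\R_{\geq 0}$ and set $\l_{\max}:=\max K$. By Theorem \ref{APB} there exists $M_{0}>0$, depending only on $K$, such that $\|u\|_{L^{\infty}}\leq M_{0}$ for every $(\l,u)\in\mathscr{S}_{K}$. Since $\mathbb{T}$ has finite measure $2\pi$, this immediately produces the companion $L^{2}$ and $L^{2p}$-bounds
$$
\|u\|_{L^{2}}\leq \sqrt{2\pi}\,M_{0},\qquad \|u\|_{L^{2p}}\leq (2\pi)^{\frac{1}{2p}}M_{0}.
$$

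Next I would take the $L^{2}$-norm of both sides of the equation. Using the triangle inequality and the bounds above, one gets
$$
\|(-\Delta)^{s}u\|_{L^{2}}\leq |\l|\,\|u\|_{L^{2}}+\bigl\||u|^{p}\bigr\|_{L^{2}}\leq \sqrt{2\pi}\bigl(\l_{\max}M_{0}+M_{0}^{p}\bigr)=:C_{1},
$$
where $C_{1}$ depends only on $K$. Combining this with the Fourier identity displayed above yields
$$
\|u\|_{\dot{H}^{2s}}=\frac{1}{\sqrt{2\pi}}\,\|(-\Delta)^{s}u\|_{L^{2}}\leq \frac{C_{1}}{\sqrt{2\pi}}.
$$

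Finally, adding the $L^{2}$-bound and the homogeneous $\dot{H}^{2s}$-bound gives
$$
\|u\|_{H^{2s}}=\sqrt{\|u\|_{L^{2}}^{2}+\|u\|_{\dot{H}^{2s}}^{2}}\leq \sqrt{2\pi\,M_{0}^{2}+\tfrac{C_{1}^{2}}{2\pi}}=:M,
$$
which is uniform in $(\l,u)\in\mathscr{S}_{K}$. There is no real obstacle here: the whole argument is a three-line consequence of Theorem \ref{APB} together with the fact that for $\mf{m}\equiv 1$ the operator $(-\Delta)^{s}$ is, up to the harmless factor $\sqrt{2\pi}$, an isometry from $\dot{H}^{2s}(\mathbb{T})$ onto $L^{2}(\mathbb{T})$. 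The only place where one must be slightly careful is the verification that the constants $M_{0}, C_{1}, M$ depend only on $K$ and not on the individual pair $(\l,u)$, but this is built into the statement of Theorem \ref{APB}.
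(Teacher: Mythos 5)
Your proof is correct and follows essentially the same route as the paper: both invoke Theorem \ref{APB} for the uniform $L^{\infty}$-bound and then take the $L^{2}$-norm of the equation, exploiting the identity $\|(-\Delta)^{s}u\|_{L^{2}}\asymp\|u\|_{\dot{H}^{2s}}$ for $\mf{m}\equiv 1$. The only cosmetic differences are that you apply the triangle inequality before squaring while the paper squares first and expands the cross term, and that you carry the Parseval factor $\sqrt{2\pi}$ explicitly where the paper silently absorbs it.
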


\begin{proof}
	Firstly, given $u\in H^{2s}(\mathbb{T})$, we compute the $L^{2}$-norm of the fractional Laplacian
	$$
	\|(-\D)^{s}u\|_{L^{2}}^{2} = 2\pi \sum_{n\in\Z} |n|^{4s}|\widehat{u}(n)|^{2} = 2\pi \|u\|_{\dot{H}^{2s}}^{2}.
	$$
	Therefore, taking the $L^{2}$-norm in the equation \eqref{FLE}, we obtain
	\begin{align*}
		2\pi \|u\|_{\dot{H}^{2s}}^{2} & = \|\l u + |u|^{p}\|_{L^{2}}^{2} = |\l|^{2}\|u\|^{2}_{L^{2}}+2\l \int_{\mathbb{T}}|u|^{p}u+\int_{\mathbb{T}}|u|^{2p} \\
		& \leq \l^{2} \|u\|_{L^{2}}^{2}+2\l\|u\|^{p+1}_{L^{p+1}}+\|u\|^{2p}_{L^{2p}} \leq C(\l_{\ast}) (\|u\|_{L^{\infty}}^{2}+\|u\|^{p+1}_{L^{\infty}}+\|u\|^{2p}_{L^{\infty}}),
	\end{align*}
	where  $C(\l_{\ast})>0$ is a positive constant depending on $\l_{\ast}:=\sup K = \max K$.
	Now, an application of Theorem \ref{APB} concludes the proof.
\end{proof}

\section{Global bifurcation theory}\label{S7}

This section is devoted to study the global structure of the connected components 
$$\mathscr{C}_{k}\subset \R\times H^{2s}_{\mathbf{+}}(\mathbb{T}), \quad k\geq 1,$$
of nontrivial solutions emanating from the bifurcation points $(k^{2s}\mf{m}(k),0)\in\R\times H^{2s}_{\mathbf{+}}(\mathbb{T})$. More precisely, we will use the results obtained in the preceding sections to apply the global alternative Theorem \ref{TGB} and prove Theorems \ref{Th1.1} and \ref{Th1.2}. 
\par We start by proving that the components $\mathscr{C}_{k}$ and $\mathscr{C}_{0}$ are disjoint.
\begin{lemma}
	\label{Lema7.1}
	For each $k\in\N$, $\mathscr{C}_{0}\cap\mathscr{C}_{k}=\emptyset$.
\end{lemma}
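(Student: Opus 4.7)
The plan is a short argument by contradiction that exploits the fact that distinct connected components of a topological space are either equal or disjoint. Assume, for contradiction, that $\mathscr{C}_{0}\cap\mathscr{C}_{k}\neq\emptyset$ for some $k\geq 1$. Since both sets are connected components of the same space $\mathfrak{S}$, this forces $\mathscr{C}_{0}=\mathscr{C}_{k}$. The contradiction will come from confronting this equality with the rigid description of $\mathscr{C}_{0}$ on the right half-plane given by Proposition \ref{Le7.1}.

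First, I would invoke Theorem \ref{LCRWw}(a) to produce the local curve $(\Lambda_{k}(s),u_{k}(s))$ with $u_{k}(s)=s(\cos(kx)+\Gamma_{k}(s))$ emanating from $(\sigma_{k},0)=(k^{2s}\mathfrak{m}(k),0)$, and recall from the parenthetical description of $\mathscr{C}_{k}\cap B_{\rho}(\sigma_{k},0)$ in that theorem that these points lie in $\mathscr{C}_{k}$. Two elementary observations then pin down what these points look like. Since $\Lambda_{k}$ is continuous with $\Lambda_{k}(0)=\sigma_{k}>0$, we have $\Lambda_{k}(s)>0$ for all sufficiently small $s$. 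Next, since $\Gamma_{k}$ is continuous with $\Gamma_{k}(0)=0$ and $H^{2s}_{+}(\mathbb{T})\hookrightarrow\mathcal{C}(\mathbb{T})$ (using $s\geq\tfrac{1}{2}$), the function $u_{k}(s)/s$ is uniformly close to $\cos(kx)$ for $s$ small, and therefore $u_{k}(s)$ is \emph{non-constant} in $x$ whenever $s\neq 0$ is small enough.

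Under the hypothesis $\mathscr{C}_{0}=\mathscr{C}_{k}$, the point $(\Lambda_{k}(s),u_{k}(s))$ with such a small $s\neq 0$ therefore lies in $\mathscr{C}_{0}\cap\{\lambda>0\}$. But Proposition \ref{Le7.1} identifies this latter set as the family of constant solutions $(\lambda,-\lambda^{1/(p-1)})$, which contradicts the non-constancy of $u_{k}(s)$. Hence the assumption $\mathscr{C}_{0}\cap\mathscr{C}_{k}\neq\emptyset$ is untenable.

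The argument is essentially forced once Proposition \ref{Le7.1} and Theorem \ref{LCRWw} are in hand, so there is no substantial obstacle. The only minor point worth being careful about is confirming that $u_{k}(s)$ is genuinely non-constant for small $s\neq 0$; this is not quite a tautology from $\Gamma_{k}(s)\in Y_{k}$, since $Y_{k}$ is defined by an $H^{2s}$-orthogonality rather than an $L^{2}$-orthogonality, but it is immediate from the $\mathcal{C}(\mathbb{T})$-continuity of $s\mapsto\Gamma_{k}(s)$ at $s=0$ as explained above.
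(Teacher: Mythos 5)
Your proof is correct and takes essentially the same approach as the paper: argue by contradiction, use that distinct connected components of $\mathfrak{S}$ are disjoint, and contradict Proposition~\ref{Le7.1}. The paper's version is slightly more economical: once $\mathscr{C}_{0}=\mathscr{C}_{k}$, the point $(\sigma_k,0)=(k^{2s}\mathfrak{m}(k),0)$ itself lies in $\mathscr{C}_{0}$ (it is in $\mathscr{C}_{k}$ by the definition of $\mathfrak{S}$), and since $\sigma_k>0$ while $0\neq -\sigma_k^{1/(p-1)}$, this immediately violates Proposition~\ref{Le7.1} without any need to invoke the local Crandall--Rabinowitz curve or to verify non-constancy of $u_k(s)$. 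Your detour through Theorem~\ref{LCRWw} is sound but does extra work that the direct observation renders unnecessary.
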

\begin{proof}
	Suppose that for some $k\in \N$, $\mathscr{C}_{k}\cap \mathscr{C}_{0}\neq \emptyset$. Then, as they are connected components, necessarily, $\mathscr{C}_{k}=\mathscr{C}_{0}$. Hence, $(k^{2s}\mf{m}(k),0)\in\mathscr{C}_{0}$. But this contradicts Proposition \ref{Le7.1}. The proof is concluded.
\end{proof}
The next result shows that the connected components $\mathscr{C}_{k}$, $k\geq 1$, live in $\R_{>0}\times H^{2s}_{+}(\mathbb{T})$.
\begin{lemma}
	\label{L7.2}
	For each $k\in\N$, the following set inclusion holds:
	\begin{equation}
		\mathscr{C}_{k}\subset \R_{>0}\times H^{2s}_{+}(\mathbb{T}).
	\end{equation}
\end{lemma}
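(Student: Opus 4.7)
The plan is to argue by contradiction using the connectedness of $\mathscr{C}_{k}$, the classification of solutions at $\l=0$, and the disjointness from $\mathscr{C}_{0}$ already established.

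First, consider the decomposition
$$
\mathscr{C}_{k} = \left(\mathscr{C}_{k}\cap \{\l>0\}\right) \cup \left(\mathscr{C}_{k}\cap \{\l=0\}\right)\cup \left(\mathscr{C}_{k}\cap \{\l<0\}\right).
$$
The sets $\mathscr{C}_{k}\cap \{\l>0\}$ and $\mathscr{C}_{k}\cap \{\l<0\}$ are relatively open in $\mathscr{C}_{k}$. Since $(\s_{k},0) \in \mathscr{C}_{k}$ with $\s_{k} = k^{2s}\mf{m}(k)>0$ by Proposition \ref{Pr3.2}, the set $\mathscr{C}_{k}\cap \{\l>0\}$ is non-empty.

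The key step is to show that $\mathscr{C}_{k}\cap \{\l=0\}=\emptyset$. Indeed, if $(0,u)\in \mathscr{C}_{k}$, then $\mf{F}(0,u)=0$, i.e., $u$ is a weak solution of $\mc{L}u=|u|^{p}$. By Corollary \ref{Cr6.2}, this forces $u\equiv 0$, and hence $(0,0)\in \mathscr{C}_{k}$. But $(0,0)\in \mathscr{C}_{0}$ by definition, so $\mathscr{C}_{k}\cap \mathscr{C}_{0}\neq\emptyset$, contradicting Lemma \ref{Lema7.1}.

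Once this is established, $\mathscr{C}_{k}$ is the disjoint union of the two relatively open sets $\mathscr{C}_{k}\cap \{\l>0\}$ and $\mathscr{C}_{k}\cap \{\l<0\}$. The connectedness of $\mathscr{C}_{k}$ together with the non-emptiness of the first piece forces $\mathscr{C}_{k}\cap \{\l<0\}=\emptyset$. Hence $\mathscr{C}_{k}\subset \R_{>0}\times H^{2s}_{+}(\mathbb{T})$, as claimed. No genuine obstacle is expected: the argument is purely topological and leverages the already proved identification of zero-parameter solutions and the disjointness of components.
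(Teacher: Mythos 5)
Your proof is correct and follows essentially the same route as the paper's: use Corollary \ref{Cr6.2} to show no solution with $\l=0$ can belong to $\mathscr{C}_k$ (the only candidate being $(0,0)$, which is excluded by the disjointness Lemma \ref{Lema7.1}), and then invoke connectedness to conclude $\mathscr{C}_k$ stays in $\{\l>0\}$. Your write-up merely makes the topological step (decomposition into two disjoint relatively open sets) more explicit than the paper's terse version.
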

\begin{proof}
	Indeed, if this is not true, as $(0,0)\notin \mathscr{C}_{k}$, there must exists $u_0\in H^{2s}_{+}(\mathbb{T})$, $u_0\not\equiv 0$, such that $(0,u_0)\in\mathscr{C}_{k}$. However, this contradicts Corollary \ref{Cr6.2}.
\end{proof}
The next result is the key to prove Theorem \ref{Th1.1}.
\begin{theorem}
	\label{Th7.1}
	Suppose that $p<4s+1$. Then, 
	\begin{equation}
		\label{Pro}
	(\mf{m}(1),2^{2s}\mf{m}(2))\subset \mc{P}_{\l}(\mathscr{C}_{1}),
   \end{equation}
	where $\mc{P}_{\l}:\R\times H^{2s}_{+}(\mathbb{T})\to \R$, $(\l,u)\mapsto \l$, is the $\l$-projection operator.
\end{theorem}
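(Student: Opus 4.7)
The plan is to apply the global alternative Theorem \ref{TGB} at the bifurcation point $(\s_1,0)$, combining it with the a priori bounds and the exclusion results already established. All the standing hypotheses hold: $\mf{F}$ is of class $\mc{C}^{\o(p)-1}$ (Lemma \ref{L1}), $\partial_{u}\mf{F}(\l,u)$ is Fredholm of index zero at every $(\l,u)$ (Proposition \ref{Eqq}), $\mf{F}$ is proper on closed and bounded subsets (Proposition \ref{LF4}), and the generalized algebraic multiplicity $\chi[\mf{L},\s_{1}]=1$ is odd (Proposition \ref{Pr3.3}). Consequently, $\mathscr{C}_{1}$ must satisfy one of the following two alternatives: (i) $\mathscr{C}_1$ is unbounded in $\R\times H^{2s}_{+}(\mathbb{T})$, or (ii) $\mathscr{C}_1$ meets the trivial branch $\mc{T}$ at a point $(\s_k,0)$ with $k\in\N\cup\{0\}$ and $k\neq 1$.

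By Lemma \ref{L7.2}, $\mathscr{C}_1\subset \R_{>0}\times H^{2s}_{+}(\mathbb{T})$, and by Lemma \ref{Lema7.1}, $(\s_0,0)=(0,0)\notin\mathscr{C}_{1}$. Hence if alternative (ii) occurs, the return point must be $(\s_k,0)$ with $k\geq 2$, so $\s_k\in\mc{P}_{\l}(\mathscr{C}_1)$ and, by the ordering of $\Sigma(\mf{L})$ in Proposition \ref{Pr3.2}, $\s_k\geq \s_2=2^{2s}\mf{m}(2)>\s_1$. If alternative (i) occurs, Theorem \ref{TAB} implies that for every compact subinterval $[a,b]\subset \R_{>0}$, the intersection $\mathscr{C}_1\cap([a,b]\times H^{2s}_{+}(\mathbb{T}))$ is bounded in $H^{2s}_{+}(\mathbb{T})$. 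Therefore $\mathscr{C}_1$ can only be unbounded through the $\l$-direction, that is, $\sup \mc{P}_{\l}(\mathscr{C}_1)=+\infty$.

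Finally, $\mc{P}_{\l}(\mathscr{C}_1)$ is the continuous image of the connected set $\mathscr{C}_1$, hence a connected subset of $\R_{>0}$, i.e., an interval containing $\s_1=\mf{m}(1)$. In alternative (i), this interval is unbounded above, so it contains $(\s_1,+\infty)\supset(\mf{m}(1),2^{2s}\mf{m}(2))$. In alternative (ii), it contains both $\s_1$ and some $\s_k\geq \s_2$, and by connectedness contains $[\s_1,\s_k]\supset(\mf{m}(1),2^{2s}\mf{m}(2))$. This establishes \eqref{Pro}. The main subtlety is to rule out that alternative (ii) happen with a return to $(\s_0,0)=(0,0)$, which would only yield the interval $(0,\s_1)$ and miss the target: this is exactly the role of Lemmas \ref{Lema7.1} and \ref{L7.2} together with Corollary \ref{Cr6.2}, and it is also the reason the bifurcation direction analysis of Theorem \ref{LCRWw} is not needed here.
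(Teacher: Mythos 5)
Your proposal is correct and takes essentially the same route as the paper: apply the global alternative (Theorem \ref{TGB}) at $(\s_1,0)$, rule out a return to $(\s_0,0)=(0,0)$ via Lemmas \ref{Lema7.1} and \ref{L7.2}, and use the $H^{2s}$-a priori bounds (Theorem \ref{TAB}) together with connectedness of $\mc{P}_\l(\mathscr{C}_1)$ to cover $(\s_1,\s_2)$ in both alternatives. One small point worth tightening: when you invoke Theorem \ref{TAB}, you should take compact $\l$-intervals in $\R_{\geq 0}$ (or simply note that the bound $\l\mapsto\sqrt{2\pi|\l|^{2/(p-1)}+\Psi_\rho^2(\l)}$ is continuous on all of $\R$, hence uniform on any bounded $\l$-set), since $\mc{P}_\l(\mathscr{C}_1)$ could a priori accumulate at $\l=0$ and your restriction to compacts of $\R_{>0}$ as stated does not cover that; the paper avoids this by working with $\overline{I_{\a,\b}}$ directly.
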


	\begin{proof}
		We apply Theorem \ref{TGB} to the nonlinearity 
		\begin{equation}
			\label{DO1}
			\mf{F}:\R\times H_{+}^{2s}(\mathbb{T})\longrightarrow L_{+}^{2}(\mathbb{T}), \quad \mf{F}(\l,u)=\mc{L}u-\l u -  |u|^{p}.
		\end{equation}
		We proceed to verify the hypothesis of Theorem \ref{TGB}. On the one hand, $\mf{F}$ is orientable in the sense of Fitzpatrick, Pejsachowicz and Rabier since the domain $\R\times H^{2s}_{+}(\mathbb{T})$ is simply connected. Hypothesis {\rm{(F2)}} holds by the very definition of \eqref{DO1}. Hypothesis {\rm(F3)} follows from Proposition \ref{Eqq} and {\rm (F4)} by Proposition \ref{LF4}. Hypothesis {\rm(F5)} follows from the fact that $\Sigma(\mf{L})=\{\s_{k}\}_{k=0}^{\infty}$, where
		$$
		\s_{0}=0, \quad \s_{k}=k^{2s}\mf{m}(k), \;\; k\in\N,
		$$
		proved in Proposition \ref{Pr3.2}. On the other hand, the linearization
		$$
		\mf{L}:\R\longrightarrow\Phi_{0}(H^{2s}_{+}(\mathbb{T}),L^{2}_{+}(\mathbb{T})), 
		\quad \mf{L}(\l)[v] = \mc{L}v - \l v,
		$$
		is clearly analytic and $\chi[\mf{L}, \s_{k}]=1\in 2\N-1$ for each $k\in\N\cup\{0\}$ by Proposition \ref{Pr3.3}. Then, the application of Theorem \ref{TGB} to the connected component $\mathscr{C}_{1}$ implies that or $\mathscr{C}_{1}$ is unbounded or there exists $\s_{m}\in \Sigma(\mf{L})$, $m\neq 1$, such that $(\s_{m},0)\in\mathscr{C}_{1}$. If the second statement holds with $m>1$, then \eqref{Pro} holds trivially. Indeed, in this case, $(\s_1,0), (\s_m,0)\in \mathscr{C}_{1}$ for some $m>1$. Since $\mathscr{C}_{1}$ is connected and $\mc{P}_{\l}$ is continuous, we infer that $\mc{P}_{\l}(\mathscr{C}_{1})$ is a connected subset of $\R$. Therefore, $\mc{P}_{\l}(\mathscr{C}_{1})=I_{\a,\b}$, where $I_{\a,\b}$ is an interval of $\R$ with boundary $\{\a,\b\}$ for some $\alpha\leq \beta$. Moreover, $\alpha \leq \s_{1} < \s_{2} \leq \s_{m} \leq \beta$. Therefore, $(\s_{1},\s_{2})\subset I_{\a,\b}= \mc{P}_{\l}(\mathscr{C}_{1})$ and \eqref{Pro} is proven.
		\par Suppose that $m=0$. This implies that $\mathscr{C}_{0}=\mathscr{C}_{1}$ and this cannot happen by Lemma \ref{Lema7.1}. On the other hand, suppose that $\mathscr{C}_{1}$ is unbounded and \eqref{Pro} does not hold. Then, there exist $0\leq\a \leq \mf{m}(1)$ and $\b\in [\mf{m}(1),2^{2s}\mf{m}(2))$, $\a\neq\b$, such that 
		$$
		\mc{P}_{\l}(\mathscr{C}_{1}) = I_{\a,\b},
		$$
		where $I_{\a,\b}$ is an interval of $\R$ with boundary $\{\a,\b\}$. The existence and the non-negativity of $\a$ is justified by Lemma \ref{L7.2}. In any case, by the unboundedness of $\mathscr{C}_{1}$, there exists a sequence $\{(\l_n,u_n)\}_{n\in\N}\subset \mathscr{C}_{1}$ such that $\l_n\to\l_{0}\in \overline{I_{\a,\b}}$ as $n\to+\infty$ and
		$$
		\lim_{n\to+\infty}\|u_n\|_{H^{2s}}=+\infty.
		$$
		But, as $p<4s+1$, this contradicts Theorem \ref{TAB}. This concludes the proof.
	\end{proof}

Thanks to these results we can prove the first main theorem of this paper.

\begin{theorem}
	Suppose that $s\geq \tfrac{1}{2}$, $2\leq p<4s+1$ and the multiplier $\mf{m}$ satisfies hypothesis {\rm{(M1)--(M3)}}. Then, the pseudo-differential equation
	$$
	\mc{L}u = \l u + |u|^p, \quad x\in\mathbb{T}, \;\; u\in H^{2s}_{+}(\mathbb{T}),
	$$
	admits at least one non-constant even solution for every $\l\in (\mf{m}(1),2^{2s}\mf{m}(2))$.
\end{theorem}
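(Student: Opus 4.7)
The statement is essentially an immediate combination of the structural results already established; my plan is to exhibit the solution by extracting a point from $\mathscr{C}_1$ and then rule out the two species of constant solutions (namely $u\equiv 0$ and $u\equiv -\l^{1/(p-1)}$).

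First I would invoke Theorem \ref{Th7.1} directly: since $s\geq\tfrac12$ and $2\leq p<4s+1$, the hypotheses are met, so $(\mf{m}(1),2^{2s}\mf{m}(2))\subset \mc{P}_{\l}(\mathscr{C}_{1})$. In particular, for each $\l\in(\mf{m}(1),2^{2s}\mf{m}(2))$ there exists $u\in H^{2s}_+(\mathbb{T})$ with $(\l,u)\in\mathscr{C}_1$. Evenness is automatic since we are working in $H^{2s}_+(\mathbb{T})$.

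The remaining content is to verify that such a $u$ cannot be constant. Suppose toward a contradiction that $u\equiv\a$ for some $\a\in\R$. If $\a=0$, then $(\l,0)\in\mathscr{C}_1\subset\mf{S}$, and by the definition of $\mf{S}$ this forces $\l\in\Sigma(\mf{L})=\{0\}\cup\{k^{2s}\mf{m}(k):k\in\N\}$; but the ordering in Proposition \ref{Pr3.2} gives $\mf{m}(1)<2^{2s}\mf{m}(2)$ as two consecutive generalized eigenvalues, so the open interval $(\mf{m}(1),2^{2s}\mf{m}(2))$ is disjoint from $\Sigma(\mf{L})$, a contradiction. If $\a\neq0$, then by Proposition \ref{Pr5.1} we must have $\a=-\l^{1/(p-1)}$, that is $(\l,u)\in\mc{T}_{2}\subset\mathscr{C}_{0}$, whence $\mathscr{C}_{0}\cap\mathscr{C}_{1}\neq\emptyset$, contradicting Lemma \ref{Lema7.1}.

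I do not anticipate any genuine obstacle here, since the heavy lifting (Crandall--Rabinowitz local bifurcation, the global alternative, $H^{2s}$ a priori bounds, separation of the components $\mathscr{C}_0$ and $\mathscr{C}_1$, and the sign/ordering analysis of $\Sigma(\mf{L})$) has all been carried out. The one point requiring a line of care is the exclusion of $u\equiv 0$: one must record that $\l$ lies strictly between two consecutive elements of $\Sigma(\mf{L})$, which is precisely what the endpoints $\mf{m}(1)$ and $2^{2s}\mf{m}(2)$ provide via Proposition \ref{Pr3.2}.
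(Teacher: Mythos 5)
Your proposal is correct and follows essentially the same route as the paper: existence from Theorem \ref{Th7.1} and the projection inclusion, then exclusion of $u\equiv 0$ via the ordering of $\Sigma(\mf{L})$ (Proposition \ref{Pr3.2}), and exclusion of $u\equiv-\l^{1/(p-1)}$ via the uniqueness part of Proposition \ref{Pr5.1} together with Lemma \ref{Lema7.1}. No gaps.
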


\begin{proof}
	The existence of a solution $u_{\l}\in H^{2s}_{+}(\mathbb{T})$ for each $\l\in (\mf{m}(1),2^{2s}\mf{m}(2))$ was proven in Theorem \ref{Th7.1}. Moreover, we proved that
	$$\{(\l,u_{\l})  :  \l\in (\mf{m}(1),2^{2s}\mf{m}(2))\}\subset \mathscr{C}_{1}.$$
	It remains to show that these solutions are non-constant. If there exists $\l_{\ast}\in (\mf{m}(1),2^{2s}\mf{m}(2))$ such that $u_{\l_{\ast}}$ is constant, by the uniqueness part of Proposition \ref{Pr5.1}, we must have one of the following alternatives:
	\begin{itemize}
	\item[{\rm (a)}] $u_{\l_{\ast}}(x) = 0$,  $x\in\mathbb{T}$.
	\item[{\rm (b)}] $u_{\l_{\ast}}(x) = -\l_{\ast}^{\frac{1}{p-1}}$,  $x\in\mathbb{T}$.
    \end{itemize}
	Item (a) cannot happen as $\mathscr{C}_{1}\cap\mc{T}\subset\{(\s_{k},0)\}_{k\in \N\cup\{0\}}$ and $\l_{\ast}\in(\s_{1},\s_{2})$. If item (b) holds, then, we would have that $\mathscr{C}_{1}\cap\mathscr{C}_{0}\neq\emptyset$. This contradicts Lemma \ref{Lema7.1}. The proof is concluded.
\end{proof}

Now, we state the same results for the fractional Laplacian $\mc{L}\equiv (-\D)^{s}$, that is, when the multiplier $\mf{m}$ is identically $1$. In this case, by Theorem \ref{Th6.10}, the condition $p<4s+1$ is redundant.

\begin{theorem}
	\label{Th7.5}
	Suppose that $s\geq \tfrac{1}{2}$ and $p\geq 2$. Then, the pseudo-differential equation
	$$
	(-\D)^{s}u = \l u + |u|^p, \quad x\in\mathbb{T}, \;\; u\in H^{2s}_{+}(\mathbb{T}),
	$$
	admits at least one non-constant even solution for every $\l\in (1,2^{2s})$.
\end{theorem}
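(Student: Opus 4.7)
The plan is to mirror the proof of Theorem \ref{Th7.1} step by step, with the crucial replacement of the general a priori bound (Theorem \ref{TAB}) by the stronger fractional-Laplacian bound of Theorem \ref{Th6.10}, which dispenses with the restriction $p<4s+1$. Since the constant multiplier $\mf{m}\equiv 1$ satisfies (M1)--(M3), the entire spectral analysis of Section \ref{S3} carries over verbatim: $\Sigma(\mf{L})=\{k^{2s}:k\in\N\}\cup\{0\}$, each generalized eigenvalue is $1$-transversal, and $\chi[\mf{L},\s_k]=1$ for every $k$, so hypotheses (F1)--(F5) of Theorem \ref{TGB} hold exactly as in the general case.

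I would then apply Theorem \ref{TGB} to the component $\mathscr{C}_{1}\subset\R\times H^{2s}_{+}(\mathbb{T})$ emanating from $(\s_1,0)=(1,0)$. The conclusion is the standard global alternative: either $\mathscr{C}_1$ is unbounded in $\R\times H^{2s}_{+}(\mathbb{T})$, or there exists some $m\neq 1$ with $(\s_m,0)\in\mathscr{C}_1$. The case $m=0$ is ruled out by Lemma \ref{Lema7.1}. If $m\geq 2$, then continuity of $\mc{P}_{\l}$ and connectedness of $\mathscr{C}_1$ force $\mc{P}_{\l}(\mathscr{C}_1)$ to be an interval containing both $1$ and $\s_m\geq 2^{2s}$, so $(1,2^{2s})\subset\mc{P}_{\l}(\mathscr{C}_1)$ and the theorem follows as in Theorem \ref{Th7.1}.

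The remaining case, $\mathscr{C}_1$ unbounded but $\mc{P}_{\l}(\mathscr{C}_1)=I_{\a,\b}$ with $0\leq \a\leq 1$ and $\b\in[1,2^{2s})$, is where the new ingredient enters. By Lemma \ref{L7.2} the projection lies in $\R_{>0}$, so $\overline{I_{\a,\b}}$ is a compact subset of $\R_{\geq 0}$. Unboundedness of $\mathscr{C}_1$ would yield a sequence $\{(\l_n,u_n)\}\subset\mathscr{C}_1$ with $\l_n\in\overline{I_{\a,\b}}$ and $\|u_n\|_{H^{2s}}\to+\infty$, which directly contradicts Theorem \ref{Th6.10}. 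Hence this case cannot occur, and $(1,2^{2s})\subset \mc{P}_{\l}(\mathscr{C}_1)$ in all scenarios, providing at least one even solution $u_\l\in H^{2s}_{+}(\mathbb{T})$ for every $\l\in(1,2^{2s})$.

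Finally, the non-constancy of these solutions is argued exactly as in the proof following Theorem \ref{Th7.1}: if $u_{\l_\ast}$ were constant for some $\l_\ast\in(1,2^{2s})$, Proposition \ref{Pr5.1} would force either $u_{\l_\ast}\equiv 0$, which is impossible because $\l_\ast\notin\Sigma(\mf{L})$ and $\mathscr{C}_1\cap\mc{T}\subset\{(\s_k,0)\}_{k\in\N\cup\{0\}}$, or $u_{\l_\ast}\equiv -\l_\ast^{1/(p-1)}$, which would give $\mathscr{C}_1\cap\mathscr{C}_0\neq\emptyset$, contradicting Lemma \ref{Lema7.1}. No step is expected to be a serious obstacle since all the heavy lifting has already been done; the content of the proof is really the observation that Theorem \ref{Th6.10} is the correct substitute for Theorem \ref{TAB} in the fractional-Laplacian regime.
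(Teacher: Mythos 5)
Your proposal is correct and matches the paper's (implicit) proof of Theorem~\ref{Th7.5}: the paper does not write this proof out explicitly, but states precisely that the restriction $p<4s+1$ becomes redundant once Theorem~\ref{TAB} is replaced by the fractional-Laplacian a priori bound of Theorem~\ref{Th6.10}, and the remaining steps (global alternative via Theorem~\ref{TGB}, elimination of the cases $m=0$ and $m\geq 2$, compactness argument over $[\a,\b]\subset\R_{\geq 0}$, and non-constancy via Proposition~\ref{Pr5.1} and Lemma~\ref{Lema7.1}) are verbatim the argument of Theorem~\ref{Th7.1} and the unnumbered theorem following it. You have reproduced that argument faithfully, including the correct use of Lemma~\ref{L7.2} to guarantee $\overline{I_{\a,\b}}$ is a compact subset of $\R_{\geq 0}$ so Theorem~\ref{Th6.10} applies.
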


Finally, we proceed to study the particular case $p=2$. The next symmetry result will simplify the structure of the solution set for $\l<0$.

\begin{proposition}
	\label{P5.3}
	Suppose that $p=2$ and let $\mf{C}_{\pm}\subset \R_{\pm}\times H^{2s}_{\mathbf{+}}(\mathbb{T})$ be the subsets of non-trivial solutions defined by 
	$$\mf{C}_{+}:= \mf{S}\cap \{(\l,u) : \l>0 \}, \quad \mf{C}_{-}:= \mf{S}\cap \{(\l,u) : \l<0 \}.$$
	Then, the affine operator
	$$
	T: \mf{C}_{-} \longrightarrow \mf{C}_{+}, \quad T(\l,u) = (-\l, u+\l),
	$$
	is a homeomorphism. In particular, if $u$ is a solution of $\mc{L}u= \l u +  u^{2}$ with $\l<0$, then $v:=u+\l$ is a solution of $\mc{L}v= -\l v +  v^{2}$.
\end{proposition}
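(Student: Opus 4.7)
The proposition is essentially an algebraic symmetry of the equation in the case $p=2$. Its foundation is the fact that the Fourier multiplier $|n|^{2s}\mf{m}(n)$ defining $\mc{L}$ vanishes at $n=0$, so $\mc{L}$ annihilates every constant function. Consequently, translating $u$ by a constant leaves $\mc{L}u$ unchanged, while the pure quadratic nonlinearity $u^{2}$ allows such a translation to absorb the linear term $\lambda u$ and simultaneously flip its sign.

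I would first verify the ``in particular'' clause by direct computation, from which the rest is essentially packaging. Let $u \in H^{2s}_{+}(\mathbb{T})$ solve $\mc{L}u = \lambda u + u^{2}$ with $\lambda \ne 0$. Since $\lambda$ is a constant, hence trivially even and in $H^{2s}_{+}(\mathbb{T})$, the translated function $v := u + \lambda$ lies in $H^{2s}_{+}(\mathbb{T})$ and satisfies $\mc{L}v = \mc{L}u + \mc{L}\lambda = \mc{L}u$. A direct expansion then gives
\[
-\lambda v + v^{2} = -\lambda(u+\lambda) + (u+\lambda)^{2} = -\lambda u - \lambda^{2} + u^{2} + 2\lambda u + \lambda^{2} = \lambda u + u^{2} = \mc{L}v,
\]
so $v$ solves $\mc{L}v = -\lambda v + v^{2}$, as desired.

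For the homeomorphism statement, observe that $T(\lambda, u) = (-\lambda, u+\lambda)$ is an affine map of $\mathbb{R} \times H^{2s}_{+}(\mathbb{T})$, hence continuous, and a direct check gives $T\circ T = \mathrm{Id}$; thus $T$ is its own continuous inverse and a homeomorphism of the ambient space. It exchanges the half-spaces $\{\lambda<0\}$ and $\{\lambda>0\}$, and by the identity above it sends solutions of the $\lambda$-equation to solutions of the $(-\lambda)$-equation. Hence $T$ restricts to a homeomorphism of $\mf{F}^{-1}(0) \cap \{\lambda<0\}$ onto $\mf{F}^{-1}(0) \cap \{\lambda>0\}$. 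Refining this to a homeomorphism $\mf{C}_{-} \to \mf{C}_{+}$ reduces to a final bookkeeping check: since $\Sigma(\mf{L}) \subset [0,+\infty)$, no bifurcation points of $\mc{T}$ lie in $\mf{C}_{-}$; their counterparts $(k^{2s}\mf{m}(k),0) \in \mf{C}_{+}$ are matched under $T$ with the constant solutions $(-k^{2s}\mf{m}(k),\, k^{2s}\mf{m}(k))$ on the secondary branch $\mc{T}_{2} \cap \mf{C}_{-}$ supplied by Proposition \ref{Pr5.1}. The main (and in truth only) obstacle is precisely this accounting at points where $T$ interchanges $\mc{T}$ with $\mc{T}_{2}$; once that is settled, continuity of both $T$ and $T^{-1}=T$ delivers the homeomorphism immediately.
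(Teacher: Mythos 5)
Your verification of the ``in particular'' clause and the observation that $T\circ T=\mathrm{Id}$ match the paper's own short proof, which likewise expands $\mc{L}v=\l(v-\l)+(v-\l)^2=-\l v+v^2$ and invokes the involution. You go further in one respect: you recognize that it is not automatic that $T$ carries $\mf{C}_-$ into $\mf{C}_+$, because $\mf{S}$ is not the full zero set of $\mf{F}$ but the non-trivial zeros together with the bifurcation points on $\mc{T}$.

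The bookkeeping you sketch, however, does not close, and in fact cannot. You correctly pair the discrete set $\{(\s_k,0)\}_{k\geq 1}\subset\mf{C}_+$ with the constant solutions $(-\s_k,\s_k)\in\mf{C}_-$, but this leaves the rest of $\mc{T}_2\cap\mf{C}_-=\{(\l,-\l):\l<0\}$, a full continuum, unaccounted for. For each $\l<0$ one has $T(\l,-\l)=(-\l,0)\in\mc{T}$, which belongs to $\mf{S}$ only if $-\l\in\Sigma(\mf{L})$; since $\Sigma(\mf{L})$ is discrete, for all but countably many $\l<0$ this image escapes $\mf{S}$, so $T(\mf{C}_-)\not\subset\mf{C}_+$ with the sets as literally defined. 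The paper's own proof contains exactly the same gap: it checks that $v$ solves the reflected equation and then asserts $T(\mf{C}_-)\subset\mf{C}_+$ without noting that $v=u+\l$ may vanish. What is actually correct, and all that is used downstream (e.g.\ in Theorem \ref{TP}, item (b)), is the ``in particular'' clause together with its consequence that $T$ restricts to a bijection between non-constant non-trivial solutions with $\l<0$ and those with $\l>0$; the homeomorphism claim should be read in that restricted sense rather than as a statement about all of $\mf{C}_\pm$.
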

\begin{proof}
	Let $(\l,u)\in \mf{C}_{-}$, then setting $v = u+\l$, we obtain that
	\begin{align*}
		\mc{L}v = \l (v-\l) + (v-\l)^{2}.
	\end{align*}
	That is, $v$ satisfies the equation $\mc{L}v = -\l v + v^{2}$. This proves that $T(\mf{C}_{-})\subset \mf{C}_{+}$. The fact that $T$ is an homeomorphism follows immediately by standard computations by noting that the inverse of $T$ is given by $T^{-1}(\l,u)=(-\l,u+\l)$.
\end{proof}

Thanks to this proposition, the following result follows immediately. Hence, we omit the proof.

\begin{theorem}
	Suppose that $s\geq \tfrac{1}{2}$ and the multiplier $\mf{m}$ satisfies hypothesis {\rm{(M1)--(M3)}}. Then, the pseudo-differential equation
	$$
	\mc{L}u = \l u + |u|^2, \quad x\in\mathbb{T}, \;\; u\in H^{2s}_{+}(\mathbb{T}),
	$$
	admits at least one non-constant even solution for every $\l\in (-\mf{m}(1),-2^{2s}\mf{m}(2))\cup (\mf{m}(1),2^{2s}\mf{m}(2)) $.
\end{theorem}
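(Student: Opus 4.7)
The plan is to split the proof according to the sign of $\l$. On the positive interval $(\mf{m}(1), 2^{2s}\mf{m}(2))$, I would simply invoke the preceding theorem (established immediately above for general $p \in [2, 4s+1)$), noting that for $p = 2$ and $s \geq 1/2$ the hypothesis $p < 4s + 1$ is automatic. This already produces a non-constant even solution in $H^{2s}_{+}(\mathbb{T})$ for every such $\l$, so no new argument is required on this side.

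For the negative interval (which, following the formulation of Theorem \ref{Th1.1}, I read as $(-2^{2s}\mf{m}(2), -\mf{m}(1))$), my plan is to transport the previous solutions through the involution $T$ of Proposition \ref{P5.3}. Concretely, given $\l$ in this range, I would set $\mu := -\l \in (\mf{m}(1), 2^{2s}\mf{m}(2))$, apply the preceding theorem to obtain a non-constant even solution $v \in H^{2s}_{+}(\mathbb{T})$ of $\mc{L}v = \mu v + v^{2}$, and then define $u := v + \mu$. Since $T$ is the involution $(\mu, v) \mapsto (-\mu, v+\mu)$, Proposition \ref{P5.3} guarantees that $u$ solves $\mc{L}u = \l u + u^{2}$. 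Evenness and $H^{2s}$-regularity are preserved under the addition of the real constant $\mu$, so $u \in H^{2s}_{+}(\mathbb{T})$, and $u$ is non-constant because $v$ is.

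I do not foresee any genuine obstacle: the real analytical content---the spectral study of $\mf{L}$, the Crandall--Rabinowitz local bifurcation, the a priori bounds via the Gagliardo--Nirenberg--Sobolev inequality, and the exclusion of the constant branches from $\mathscr{C}_{1}$---has already been carried out in Sections \ref{S3}--\ref{S7}. The only loose end is to confirm that both intervals are non-empty, which follows from hypothesis (M2) (so $\mf{m}(1) \leq \mf{m}(2)$) together with $2^{2s} > 1$ for $s \geq 1/2$, yielding $\mf{m}(1) < 2^{2s}\mf{m}(2)$. The argument therefore reduces to a short corollary of the preceding theorem and Proposition \ref{P5.3}, fully consistent with the authors' decision to omit the proof.
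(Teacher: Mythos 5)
Your proposal is correct and is precisely the argument the paper intends (the paper explicitly omits the proof, noting only that the theorem "follows immediately" from Proposition \ref{P5.3} and the preceding theorem). You correctly identify the interval typo in the statement, correctly verify $p=2<4s+1$ for $s\geq\tfrac12$, and correctly apply the affine involution $T^{-1}(\mu,v)=(-\mu,v+\mu)$ to carry non-constant even $H^{2s}$-solutions from positive to negative $\l$, with all the trivial preservation properties (evenness, regularity, non-constancy) checked.
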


Figure \ref{F2} provides an illustration of the action of the affine operator $T$. Recall that we are representing the value of the parameter $\l$ in abscissas versus the norm $\|u\|_{H^{2s}}$ or $-\|u\|_{H^{2s}}$. This differentiation is made in order to express the multiplicity of solutions.

\begin{figure}[h!]

	\tikzset{every picture/.style={line width=0.75pt}} 
	
	\begin{tikzpicture}[x=0.75pt,y=0.75pt,yscale=-1,xscale=1]
		
		\draw    (260,39) -- (260.84,221.76) ;
		\draw [color={rgb, 255:red, 74; green, 144; blue, 226 }  ,draw opacity=1 ][line width=1.5]    (108,145) -- (444.91,144.58) ;
		\draw [color={rgb, 255:red, 126; green, 211; blue, 33 }  ,draw opacity=1 ][line width=1.5]    (150.49,58.06) -- (338.84,206.76) ;
		\draw  [color={rgb, 255:red, 126; green, 211; blue, 33 }  ,draw opacity=1 ][line width=1.5]  (373.22,108.79) .. controls (296.83,133.27) and (297.18,156.61) .. (374.28,178.78) ;
		\draw  [color={rgb, 255:red, 126; green, 211; blue, 33 }  ,draw opacity=1 ][line width=1.5]  (136.32,88.96) .. controls (209.71,121.36) and (224.94,103.68) .. (182,35.91) ;
		\draw    (229,81) .. controls (259.54,77.06) and (278.43,88.64) .. (297.14,115.75) ;
		\draw [shift={(298,117)}, rotate = 235.84] [color={rgb, 255:red, 0; green, 0; blue, 0 }  ][line width=0.75]    (10.93,-3.29) .. controls (6.95,-1.4) and (3.31,-0.3) .. (0,0) .. controls (3.31,0.3) and (6.95,1.4) .. (10.93,3.29)   ;
		\draw  [color={rgb, 255:red, 126; green, 211; blue, 33 }  ,draw opacity=1 ][fill={rgb, 255:red, 126; green, 211; blue, 33 }  ,fill opacity=1 ] (313.69,144.65) .. controls (313.69,143.27) and (314.81,142.15) .. (316.19,142.15) .. controls (317.57,142.15) and (318.69,143.27) .. (318.69,144.65) .. controls (318.69,146.03) and (317.57,147.15) .. (316.19,147.15) .. controls (314.81,147.15) and (313.69,146.03) .. (313.69,144.65) -- cycle ;
		\draw  [dash pattern={on 0.84pt off 2.51pt}]  (202.78,100) -- (202.73,146.14) ;
		\draw  [color={rgb, 255:red, 126; green, 211; blue, 33 }  ,draw opacity=1 ][fill={rgb, 255:red, 126; green, 211; blue, 33 }  ,fill opacity=1 ] (200.28,100) .. controls (200.28,98.62) and (201.4,97.5) .. (202.78,97.5) .. controls (204.16,97.5) and (205.28,98.62) .. (205.28,100) .. controls (205.28,101.38) and (204.16,102.5) .. (202.78,102.5) .. controls (201.4,102.5) and (200.28,101.38) .. (200.28,100) -- cycle ;
		\draw  [color={rgb, 255:red, 126; green, 211; blue, 33 }  ,draw opacity=1 ][fill={rgb, 255:red, 126; green, 211; blue, 33 }  ,fill opacity=1 ] (258.28,145) .. controls (258.28,143.62) and (259.4,142.5) .. (260.78,142.5) .. controls (262.16,142.5) and (263.28,143.62) .. (263.28,145) .. controls (263.28,146.38) and (262.16,147.5) .. (260.78,147.5) .. controls (259.4,147.5) and (258.28,146.38) .. (258.28,145) -- cycle ;
		
		\draw (266.03,37.4) node [anchor=north west][inner sep=0.75pt]    {$\| \cdot \| _{H^{2s}}$};
		\draw (446.91,147.98) node [anchor=north west][inner sep=0.75pt]    {$\lambda $};
		\draw (305.04,202.08) node [anchor=north west][inner sep=0.75pt]    {$\mathscr{C}_{0}$};
		\draw (410,123.23) node [anchor=north west][inner sep=0.75pt]    {$\mathcal{T}$};
		\draw (329.04,90.08) node [anchor=north west][inner sep=0.75pt]    {$\mathscr{C}_{1}$};
		\draw (281,71.4) node [anchor=north west][inner sep=0.75pt]    {$T$};
		\draw (284.46,147.19) node [anchor=north west][inner sep=0.75pt]    {$\mathfrak{m}( 1)$};
		\draw (179.46,146.19) node [anchor=north west][inner sep=0.75pt]    {$-\mathfrak{m}( 1)$};

	\end{tikzpicture}
	\caption{Illustration of the connected component $\mathscr{C}_{0}$ and $\mathscr{C}_{1}$ and the action of the homeomorphism $T$}
	\label{F2}
\end{figure}
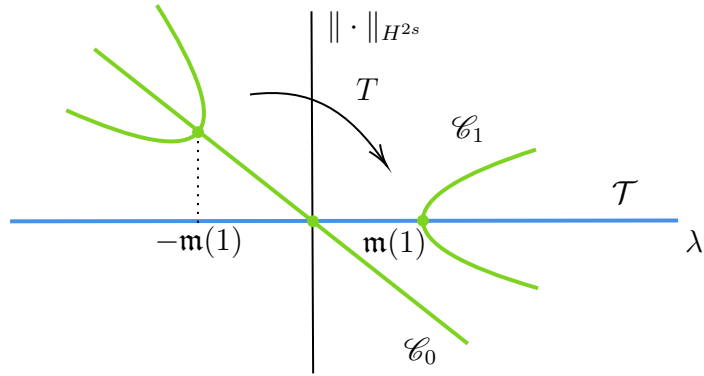

\subsection{Global bifurcation for the fractional laplacian.}

In this subsection, we sharpen the preceding results by exploiting the symmetry of the fractional Laplacian $\mc{L}\equiv (-\D)^{s}$.
We start with the following qualitative result:

\begin{lemma}
	\label{L7.1}
	If $u\in H^{2s}_{\mathbf{+}}(\mathbb{T})$ is a solution of
	$$
	(-\D)^{s}u = \l u + |u|^{p}, \quad x\in\mathbb{T}.
	$$
	Then, for each $k\in\N$, the function $v_{k}\in H^{2s}_{\mathbf{+}}(\mathbb{T})$ defined by 
	$$
	v_{k}(x):=k^{\frac{2s}{p-1}} u(k x), \quad x\in \mathbb{T},
	$$
	is a solution of
	$$
	(-\D)^{s}v_{k} = \l k^{2s} v_{k}+|v_k|^{p}, \quad x\in\mathbb{T}.
	$$
\end{lemma}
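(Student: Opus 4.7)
The strategy is a direct scaling argument, most cleanly executed in the Fourier side since $(-\Delta)^s$ is defined on $\mathbb{T}$ as the Fourier multiplier with symbol $|n|^{2s}$. First I would observe that if $u \in H^{2s}_+(\mathbb{T})$ has Fourier expansion $u(x) = \sum_{n \in \Z} \widehat{u}(n) e^{inx}$, then for any $k \in \N$ the function $x \mapsto u(kx)$ is again $2\pi$-periodic and even, with Fourier expansion $u(kx) = \sum_{n \in \Z} \widehat{u}(n) e^{inkx}$. Hence its Fourier coefficients vanish except at frequencies that are multiples of $k$, and a termwise computation gives
\begin{equation*}
    (-\Delta)^s [u(k\,\cdot)](x) = \sum_{n \in \Z} |nk|^{2s} \widehat{u}(n) e^{inkx} = k^{2s} \sum_{n \in \Z} |n|^{2s} \widehat{u}(n) e^{inkx} = k^{2s}\bigl[(-\Delta)^s u\bigr](kx).
\end{equation*}
This is the only nontrivial identity required; everything else is arithmetic of exponents. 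To justify the manipulation rigorously, one just notes that $u \in H^{2s}(\mathbb{T})$ implies $\sum_n |n|^{4s}|\widehat{u}(n)|^2 < +\infty$, which by the chain $|nk|^{2s} = k^{2s}|n|^{2s}$ gives $u(k\,\cdot) \in H^{2s}(\mathbb{T})$ as well.

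Next I would simply insert the ansatz $v_k(x) = k^{2s/(p-1)} u(kx)$ into the scaled equation. By linearity and the scaling identity above,
\begin{equation*}
    (-\Delta)^s v_k(x) = k^{\frac{2s}{p-1}+2s} \bigl[(-\Delta)^s u\bigr](kx) = k^{\frac{2s}{p-1}+2s}\Bigl(\lambda u(kx) + |u(kx)|^p\Bigr).
\end{equation*}
The first term is $\lambda k^{2s}\cdot k^{2s/(p-1)} u(kx) = \lambda k^{2s} v_k(x)$, which has the desired form. For the second term, I would use the algebraic identity $\frac{2s}{p-1} + 2s = \frac{2sp}{p-1}$, which is precisely the exponent needed so that $k^{\frac{2s}{p-1}+2s} |u(kx)|^p = \bigl|k^{\frac{2s}{p-1}} u(kx)\bigr|^p = |v_k(x)|^p$. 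This is exactly the reason the scaling exponent $\frac{2s}{p-1}$ is the right one: it makes the nonlinearity invariant. Finally, since $u$ is even, so is $v_k$, and by the preceding Fourier computation $v_k \in H^{2s}_+(\mathbb{T})$. No genuine obstacle arises here; the only subtlety is that the scaling must respect periodicity, but replacing $x \mapsto kx$ with $k \in \N$ preserves the $2\pi$-period, so the argument stays entirely within $H^{2s}_+(\mathbb{T})$.
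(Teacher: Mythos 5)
Your proof is correct and follows essentially the same route as the paper's: both compute the action of $(-\Delta)^s$ on $u(k\cdot)$ via the Fourier side (observing the coefficients are supported on multiples of $k$), obtain $(-\Delta)^s[u(k\cdot)] = k^{2s}[(-\Delta)^s u](k\cdot)$, and then verify the exponent arithmetic $\tfrac{2s}{p-1}+2s=\tfrac{2sp}{p-1}$ that makes the nonlinear term scale correctly. The only cosmetic difference is that the paper computes the Fourier coefficients of $v_k$ directly and then divides the scaled equation through, whereas you first establish the scaling identity for $(-\Delta)^s$ and then substitute; the content is identical.
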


\begin{proof}
	Let $v\in H^{2s}_{\mathbf{+}}(\mathbb{T})$ defined by $v(x):=k^{\frac{2s}{p-1}} u(k x)$, $x\in \mathbb{T}$. Then, for each $n\in\Z$, we have
	\begin{align*}
		\widehat{v}_{k}(n) = k^{\frac{2s}{p-1}}\widehat{u}\left(\frac{n}{k}\right),
	\end{align*}
	and, consequently, $\widehat{v}_{k}(n)=0$ for each $n\in\Z$ such that $k\nmid n$. Therefore,
	\begin{align*}
		(-\D)^{s}v_{k}(x) & = \sum_{n\in\Z}|n|^{2s}\widehat{v}_{k}(n) e^{inx} = k^{\frac{2s}{p-1}} \sum_{n\in\Z}|n|^{2s}\widehat{u}\left(\frac{n}{k}\right) e^{inx} \\
		& = k^{\frac{2s}{p-1}}\sum_{n\in\Z}|kn|^{2s}\widehat{u}(n)e^{inkx}=k^{\frac{2sp}{p-1}}\sum_{n\in\Z}|n|^{2s}\widehat{u}(n)e^{inkx}=k^{\frac{2sp}{p-1}}(-\D)^{s}u(kx).
	\end{align*}
	Then, $v_{k}$ satisfies the equation
	\begin{align*}
		\frac{1}{k^{\frac{2sp}{p-1}}}(-\D)^{s}v_{k}=\frac{\l}{k^{\frac{2s}{p-1}}}v_{k}+\frac{1}{k^{\frac{2sp}{p-1}}}|v_{k}|^{p}, \quad x\in\mathbb{T},
	\end{align*}
	that is, $v_{k}$ satisfies $(-\D)^{s}v_{k} = \l k^{2s} v_{k}+|v_k|^{p}$. This concludes the proof.
\end{proof}

We can use Lemma \ref{L7.1} to construct certain continuous injections that will give us some qualitative information about the global structure of the connected components $\mathscr{C}_{k}$, $k\geq 1$. The following result constructs these continuous injections.

\begin{proposition}
	\label{Pr7.20}
	For each $k\in\N$, the operator
	$$
	T_{k}:\mathfrak{S}\longrightarrow\mathfrak{S}, \quad T_{k}(\l,u)=(k^{2s}\l , k^{\frac{2s}{p-1}} u(kx)),
	$$
	is a continuous injection.
\end{proposition}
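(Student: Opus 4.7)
I would split the claim into three verifications: that $T_k$ maps $\mathfrak{S}$ into itself, that $T_k$ is continuous, and that $T_k$ is injective.

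\textbf{Mapping into $\mathfrak{S}$.} Given $(\lambda,u)\in\mathfrak{S}$, set $v_k(x):=k^{\frac{2s}{p-1}}u(kx)$. First, $v_k\in H^{2s}_{+}(\mathbb{T})$: it is $2\pi$-periodic because $u$ is, and it is even since $v_k(-x)=k^{\frac{2s}{p-1}}u(-kx)=k^{\frac{2s}{p-1}}u(kx)=v_k(x)$. I would then split into two cases. If $u\not\equiv 0$ with $\mathfrak{F}(\lambda,u)=0$, then Lemma \ref{L7.1} yields $(-\Delta)^{s}v_k=k^{2s}\lambda\,v_k+|v_k|^{p}$, i.e.\ $\mathfrak{F}(k^{2s}\lambda,v_k)=0$, with $v_k\not\equiv 0$; hence $T_k(\lambda,u)\in \mathfrak{F}^{-1}(0)\setminus\mathcal{T}\subset\mathfrak{S}$. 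If instead $u\equiv 0$, then $\lambda\in\Sigma(\mathfrak{L})=\{n^{2s}:n\in\N\}\cup\{0\}$ by Proposition \ref{Pr3.2} applied to $\mathfrak{m}\equiv 1$, so $k^{2s}\lambda$ is either $0$ or of the form $(kn)^{2s}$, both of which lie in $\Sigma(\mathfrak{L})$; thus $T_k(\lambda,0)=(k^{2s}\lambda,0)\in\mathfrak{S}$.

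\textbf{Continuity.} Since the $\lambda$-component of $T_k$ is linear, it suffices to verify that the linear map $\Psi_k:H^{2s}_{+}(\mathbb{T})\to H^{2s}_{+}(\mathbb{T})$, $\Psi_k(u)(x)=k^{\frac{2s}{p-1}}u(kx)$, is bounded. Expanding $u=\sum_{n}\widehat{u}(n)e^{inx}$, I obtain $\Psi_k(u)=k^{\frac{2s}{p-1}}\sum_{n}\widehat{u}(n)e^{inkx}$, so the Fourier coefficients of $\Psi_k(u)$ are $\widehat{\Psi_k(u)}(m)=k^{\frac{2s}{p-1}}\widehat{u}(m/k)$ when $k\mid m$ and $0$ otherwise. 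Parseval then gives $\|\Psi_k(u)\|_{L^{2}}^{2}=k^{\frac{4s}{p-1}}\|u\|_{L^{2}}^{2}$ and $\|\Psi_k(u)\|_{\dot{H}^{2s}}^{2}=k^{\frac{4s}{p-1}+4s}\|u\|_{\dot{H}^{2s}}^{2}$, whence $\|\Psi_k(u)\|_{H^{2s}}\leq k^{\frac{2s}{p-1}+2s}\|u\|_{H^{2s}}$.

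\textbf{Injectivity.} Suppose $T_k(\lambda_1,u_1)=T_k(\lambda_2,u_2)$. The equality of first components yields $k^{2s}\lambda_1=k^{2s}\lambda_2$, so $\lambda_1=\lambda_2$. The equality of second components gives $u_1(kx)=u_2(kx)$ for all $x\in\mathbb{T}$; since $x\mapsto kx\pmod{2\pi}$ is surjective onto $\mathbb{T}$, I conclude $u_1=u_2$.

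There is no substantive obstacle here: the only point worth being careful about is the book-keeping for the trivial branch, namely verifying that $k^{2s}\cdot\Sigma(\mathfrak{L})\subset\Sigma(\mathfrak{L})$, which is immediate for $\mathfrak{m}\equiv 1$ but is precisely where the argument uses the fractional Laplacian hypothesis rather than a general multiplier.
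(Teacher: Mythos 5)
Your proof is correct and follows essentially the same route as the paper: Fourier/Parseval estimates for continuity, the scaling Lemma \ref{L7.1} for invariance of the solution set, and the observation that $x\mapsto kx$ covers $\mathbb{T}$ for injectivity. In fact you are slightly more careful than the paper, which simply cites Lemma \ref{L7.1} to conclude $T_k(\mathfrak{S})\subset\mathfrak{S}$ without explicitly checking that $k^{2s}\Sigma(\mathfrak{L})\subset\Sigma(\mathfrak{L})$ for the trivial-branch points $(\sigma_m,0)$ — your observation that this step genuinely uses $\mathfrak{m}\equiv 1$ is exactly right.
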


\begin{proof}
	First of all, let us prove that $T_{k}:\R\times H^{2s}_{\mathbf{+}}(\mathbb{T})\to \R\times H^{2s}_{\mathbf{+}}(\mathbb{T})$ is continuous. Let $\{(\l_{n},u_{n})\}_{n\in\N}\subset \R\times H^{2s}_{\mathbf{+}}(\mathbb{T})$ and $(\l_{0},u_{0})\in \R\times H^{2s}_{\mathbf{+}}(\mathbb{T})$ such that 
	$$
	\lim_{n\to+\infty}(\l_{n},u_{n}) = (\l_{0},u_{0}), \quad \text{in} \;\; \R\times H^{2s}_{\mathbf{+}}(\mathbb{T}).
	$$
	Let us define $v_{n}(x):=k^{\frac{2s}{p-1}}u_{n}(kx)$, $x\in\mathbb{T}$, and $v_{0}(x):=k^{\frac{2s}{p-1}}u_{0}(kx)$, $x\in\mathbb{T}$. We  must prove that 
	$$
	\lim_{n\to+\infty} v_{n} = v_{0} \quad \text{in} \;\; H^{2s}_{\mathbf{+}}(\mathbb{T}).
	$$
	Clearly $v_{n},v_{0}\in H^{2s}_{\mathbf{+}}(\mathbb{T})$.
	On the one hand, the change of variable $y = kx$ and the periodicity of $u_{n}$ yield
	\begin{align*}
		\|v_{n}-v_{0}\|_{L^{2}}^{2} & = k^{\frac{4s}{p-1}}\int_{-\pi}^{\pi} |u_{n}(kx)-u_{0}(kx)|^{2} \;{\rm{d}}x = k^{\frac{4s}{p-1}-1} \int_{-k\pi}^{k\pi} |u_{n}(y)-u_{0}(y)|^{2} \; {\rm{d}}y \\
		& =  k^{\frac{4s}{p-1}-1} \int_{0}^{2k\pi} |u_{n}(y)-u_{0}(y)|^{2} \; {\rm{d}}y =  k^{\frac{4s}{p-1}-1} \sum_{j=0}^{k-1}\int_{2j\pi}^{2(j+1)\pi} |u_{n}(y)-u_{0}(y)|^{2} \; {\rm{d}}y \\
		& = k^{\frac{4s}{p-1}}\int_{-\pi}^{\pi}|u_{n}(y)-u_{0}(y)|^{2} \; {\rm{d}}y = k^{\frac{4s}{p-1}} \|u_{n}-u_{0}\|_{L^{2}}^{2}.
	\end{align*}
	On the other hand, arguing as in the proof of Lemma \ref{L7.1}, we obtain
	\begin{align*}
		\|v_{n}-v_{0}\|^{2}_{\dot{H}^{2s}}&=\sum_{n\in\Z}|n|^{4s}|\widehat{v}_{n}(n)-\widehat{v}_{0}(n)|^{2}=k^{\frac{4s}{p-1}}\sum_{n\in\Z}|n|^{4s}\Big|\widehat{u}_{n}\left(\frac{n}{k}\right)-\widehat{u}_{0}\left(\frac{n}{k}\right)\Big|^{2}\\
		&=k^{\frac{4s}{p-1}}\sum_{n\in\Z}|kn|^{4s}|\widehat{u}_{n}(n)-\widehat{u}_{0}(n)|^{2} = k^{\frac{4sp}{p-1}}\sum_{n\in\Z}|n|^{4s}|\widehat{u}_{n}(n)-\widehat{u}_{0}(n)|^{2} \\
		&= k^{\frac{4sp}{p-1}}\|u_{n}-u_{0}\|_{\dot{H}^{2s}}^{2}
	\end{align*}
	Therefore,
	\begin{align*}
		\lim_{n\to+\infty}\|v_{n}-v_{0}\|^{2}_{H^{2s}} = k^{\frac{4s}{p-1}} \lim_{n\to+\infty} \|u_{n}-u_{0}\|^{2}_{L^{2}} + k^{\frac{4sp}{p-1}}\lim_{n\to+\infty} \|u_{n}-u_{0}\|^{2}_{\dot{H}^{2s}} = 0.
	\end{align*}
	This proves the continuity of $T_{k}$.
	\par
	Let us prove that $T_{k}$ is injective. Let $(\l_{1},u_{1}), (\l_{2},u_{2})\in \R\times H^{2s}_{\mathbf{+}}(\mathbb{T})$ such that 
	$$
	(k^{2s}\l_{1}, k^{\frac{2s}{p-1}}u_{1}(kx)) = (k^{2s}\l_{2}, k^{\frac{2s}{p-1}}u_{2}(kx)), \quad x\in\mathbb{T}.
	$$
	Then, $\l_{1}=\l_{2}$ and $u_{1}(kx)=u_{2}(kx)$ for all $x\in\mathbb{T}$. This implies that
	$$
	u_1(x) = u_2(x), \quad \text{for each} \;\; x\in[-k\pi,k\pi].
	$$
	Hence $u_{1}(x)=u_{2}(x)$ for all $x\in\mathbb{T}$ and $T_{k}$ is injective. Finally, from Lemma \ref{L7.1}, we have $T_{k}(\mathfrak{S})\subset\mathfrak{S}$. This concludes the proof.
\end{proof}

In the following result we prove that for each $k\in\N$, the action of the continuous injections $T_{k}$ can be restricted to the connected component $\mathscr{C}_{1}$.

\begin{proposition}
	\label{Pr7.2}
	For each $k\in\N$, the operator
	$$
	T_{k}:\mathscr{C}_{1}\longrightarrow\mathscr{C}_{k}, \quad T_{k}(\l,u)=(k^{2s}\l , k^{\frac{2s}{p-1}} u(kx)),
	$$
	is a continuous injection. 
\end{proposition}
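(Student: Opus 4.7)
The plan is to piggyback on Proposition \ref{Pr7.20} and use the continuity of $T_k$ together with the connectedness of $\mathscr{C}_{1}$ to funnel the image $T_k(\mathscr{C}_1)$ inside the connected component $\mathscr{C}_k$. The key observation is that this subsection deals exclusively with the fractional Laplacian, so $\mf{m}\equiv 1$ and the generalized eigenvalues of $\mf{L}$ simplify to $\s_k = k^{2s}$; in particular $T_k$ carries the bifurcation point $(\s_1,0)=(1,0)$ exactly onto the bifurcation point $(\s_k,0)=(k^{2s},0)$.

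First I would remark that by Proposition \ref{Pr7.20} the operator $T_k:\R\times H^{2s}_{+}(\mathbb{T})\to\R\times H^{2s}_{+}(\mathbb{T})$ is a continuous injection with $T_k(\mf{S})\subset \mf{S}$. Consequently, the restriction $T_k|_{\mathscr{C}_1}$ is automatically a continuous injection into $\mf{S}$; all the work reduces to showing that its image actually lies in the particular connected component $\mathscr{C}_k$. For that, I would first evaluate $T_k$ on the trivial bifurcation point $(\s_1,0)=(1,0)\in\mathscr{C}_1$. A direct computation gives
\begin{equation*}
T_k(1,0)=(k^{2s}\cdot 1,\; k^{\frac{2s}{p-1}}\cdot 0)=(k^{2s},0)=(\s_k,0)\in \mathscr{C}_k,
\end{equation*}
so the images of $\mathscr{C}_1$ and $\mathscr{C}_k$ under $T_k$ have at least the point $(\s_k,0)$ in common.

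Next, since $\mathscr{C}_1$ is connected (as a connected component of $\mf{S}$) and $T_k$ is continuous, the image $T_k(\mathscr{C}_1)$ is a connected subset of $\R\times H^{2s}_{+}(\mathbb{T})$. Moreover, Proposition \ref{Pr7.20} guarantees $T_k(\mathscr{C}_1)\subset T_k(\mf{S})\subset \mf{S}$. Therefore $T_k(\mathscr{C}_1)$ is a connected subset of $\mf{S}$ intersecting the connected component $\mathscr{C}_k$ of $\mf{S}$ at the point $(\s_k,0)$. By the maximality of connected components, this forces
\begin{equation*}
T_k(\mathscr{C}_1)\subset \mathscr{C}_k.
\end{equation*}
Consequently $T_k$ restricts to a well-defined map $\mathscr{C}_1\to \mathscr{C}_k$, which inherits continuity and injectivity from Proposition \ref{Pr7.20}, concluding the proof.

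I do not anticipate any serious obstacle: the only subtle point is that one must check that $T_k$ maps the specific bifurcation point $(\s_1,0)$ of $\mathscr{C}_1$ to the specific bifurcation point $(\s_k,0)$ of $\mathscr{C}_k$, which is precisely where the simplification $\mf{m}\equiv 1$ (i.e.\ $\s_k=k^{2s}\mf{m}(k)=k^{2s}$) enters in a crucial way. For a general multiplier $\mf{m}$ one would only get $T_k(\s_1,0)=(k^{2s}\mf{m}(1),0)$, which in general does not coincide with $(\s_k,0)=(k^{2s}\mf{m}(k),0)$, explaining why this kind of symmetry argument is particular to the fractional Laplacian.
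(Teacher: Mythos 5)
Your proof is correct and in fact takes a cleaner route than the paper's. The final step — $T_k(\mathscr{C}_1)$ is a connected subset of $\mf{S}$ that meets $\mathscr{C}_k$, hence by maximality of connected components $T_k(\mathscr{C}_1)\subset\mathscr{C}_k$ — is identical in both arguments. Where you differ is in how the nonempty intersection $T_k(\mathscr{C}_1)\cap\mathscr{C}_k\neq\emptyset$ is established: you observe directly that $T_k(\s_1,0)=(k^{2s},0)=(\s_k,0)$, and this point belongs to $\mathscr{C}_k$ by construction (the set $\mf{S}$ contains the trivial bifurcation points $\{(\l,0):\l\in\Sigma(\mf{L})\}$ and $\mathscr{C}_k$ is by definition the component of $\mf{S}$ through $(\s_k,0)$). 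The paper instead pushes the local Crandall--Rabinowitz parametrization $(\L_1(s),s(\cos x+\G_1(s)))$ of $\mathscr{C}_1$ near $(1,0)$ through $T_k$, passes to the limit $s\to0$, and invokes the local uniqueness part of Theorem \ref{LCRWw} to identify the image curve with the local branch of $\mathscr{C}_k$ near $(k^{2s},0)$ — considerably more machinery for the same payoff. Your argument buys economy: it never needs the local structure theorem, only the definition of $\mf{S}$ and the scaling $T_k(\s_1,0)=(\s_k,0)$. The paper's route has the (unexploited here) advantage of producing the finer local statement $T_k(\mathscr{C}_1\cap B_\rho(1,0))=\mathscr{C}_k\cap B_\rho(k^{2s},0)$, showing that $T_k$ carries nontrivial solutions to nontrivial solutions near the bifurcation point rather than merely hitting the trivial bifurcation point itself. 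Your closing remark about why this symmetry argument requires $\mf{m}\equiv1$ is accurate and matches the paper's restriction of this subsection to the fractional Laplacian.
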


\begin{proof}
	By Proposition \ref{Pr7.20}, $T_{k}:\mathfrak{S}\to\mathfrak{S}$ is a continuous injection. It remains to prove that $T_{k}(\mathscr{C}_{1})\subset \mathscr{C}_{k}$. The connected components 
	$$\mathscr{C}_{n}\subset \R\times H^{2s}_{\mathbf{+}}(\mathbb{T}), \quad n\in\N,$$
	of nontrivial solutions emanate from the bifurcation points $(n^{2s},0)\in\R\times H^{2s}_{\mathbf{+}}(\mathbb{T})$. Moreover, by Theorem \ref{LCRWw}, for each $n\in\N$, there exist $\e>0$ and two $\mc{C}^{\o(p)-2}$-maps
	\begin{equation*}
		\L_{n}: (-\e,\e) \longrightarrow \R, \quad \L_{n}(0)=n^{2s}, \qquad \Gamma_{n}: (-\e,\e)\longrightarrow Y_{n}, \quad  \Gamma_{n}(0)=0,
	\end{equation*}
	such that for each $s\in(-\e,\e)$,
	\begin{equation*}
		\mathfrak{F}(\L_{n}(s),u_{n}(s))=0, \quad u_{n}(s):= s(\cos(nx)+\Gamma_{n}(s)).
	\end{equation*}
	In other words, for some $\r>0$,
	\begin{equation}
		\label{Eq3}
	\mathscr{C}_{n}\cap B_{\r}(n^{2s},0)=\{(\L_{n}(s), s(\cos(nx)+\Gamma_{n}(s))) \, : \, s\in(-\varepsilon,\varepsilon)\}.
\end{equation}
	Let us prove that, for some sufficiently small $\r>0$,
	\begin{equation}
		\label{Eq5}
	T_{k}(\mathscr{C}_{1}\cap B_{\r}(1,0)) = \mathscr{C}_{k}\cap B_{\r}(k^{2s},0).
	\end{equation}
	Firstly, observe that for every $s\in (-\varepsilon,\varepsilon)$, we have 
	$$
	T_{k}(\L_{1}(s),u_{1}(s)) = (k^{2s}\L_{1}(s), k^{\frac{2s}{p-1}}u_1(kx))=(k^{2s}\L_{1}(s), k^{\frac{2s}{p-1}}s(\cos(kx)+\Gamma_{1}(s))).
	$$
	Therefore, taking the limit $s\to 0$, we obtain
	\begin{align*}
	\lim_{s\to 0} \; T_{k}(\L_{1}(s),u_{1}(s)) & = \lim_{s\to 0}  \; (k^{2s}\L_{1}(s), k^{\frac{2s}{p-1}}s(\cos(kx)+\Gamma_{1}(s))) =(k^{2s},0) .
	\end{align*}
	Hence, as $T_{k}(\mf{S})\subset\mf{S}$, the set $\{T_{k}(\L_{1}(s),u_{1}(s)) : s\in(-\varepsilon,\varepsilon)\}$ forms a curve of non-trivial solutions satisfying the limit $T_{k}(\L_{1}(s),u_{1}(s)) \to (k^{2s},0)$ as $s\to0$. By the uniqueness part of Theorem \ref{LCRWw}, we infer \eqref{Eq5}. 
	\par Now, as $\mathscr{C}_{1}$ is connected and $T_{k}$ is continuous, $T_{k}(\mathscr{C}_{1})\subset\mf{S}$ is necessarily connected. Moreover, by \eqref{Eq5}, we have $T_{k}(\mathscr{C}_{1})\cap\mathscr{C}_{k}\neq \emptyset$. As $\mathscr{C}_{k}$ is a connected component of $\mathfrak{S}$, necessarily $T_{k}(\mathscr{C}_{1})\subset \mathscr{C}_{k}$. This concludes the proof.
\end{proof}

The following result is the key to understand the global structure of the connected component $\mathscr{C}_{1}$.

\begin{proposition}
	\label{Pr4.6}
	If $\mathscr{C}_{1}\cap\mathscr{C}_{k}\neq\emptyset$ for some $k\in\N$, then $\mathscr{C}_{1}=\mathscr{C}_{k}$ is unbounded.
\end{proposition}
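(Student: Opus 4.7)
The plan is to split this into two independent claims: the set-theoretic equality $\mathscr{C}_{1}=\mathscr{C}_{k}$, and the unboundedness of this common component.

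For the equality, since $\mathscr{C}_{1}$ and $\mathscr{C}_{k}$ are by definition connected components of one and the same topological space $\mf{S}$, the general fact that any two connected components of a space are either equal or disjoint immediately forces $\mathscr{C}_{1}=\mathscr{C}_{k}$ from the hypothesis $\mathscr{C}_{1}\cap\mathscr{C}_{k}\neq\emptyset$. The case $k=1$ is then a tautology, so I would focus on $k\geq 2$, which carries the genuine content of the proposition.

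To prove unboundedness I would exploit the continuous injection $T_{k}:\mathscr{C}_{1}\to\mathscr{C}_{k}$ from Proposition \ref{Pr7.2}, given by
$$T_{k}(\l,u)=\left(k^{2s}\l,\; k^{\frac{2s}{p-1}}u(kx)\right).$$
Under the identification $\mathscr{C}_{k}=\mathscr{C}_{1}$ just established, $T_{k}$ becomes a continuous injection of $\mathscr{C}_{1}$ into itself, so by induction $T_{k}^{n}(\mathscr{C}_{1})\subset\mathscr{C}_{1}$ for every $n\in\N$. Starting from the bifurcation point $(\s_{1},0)=(1,0)\in\mathscr{C}_{1}$ provided by Theorem \ref{LCRWw} and iterating, a direct computation gives $T_{k}^{n}(1,0)=(k^{2sn},0)\in\mathscr{C}_{1}$ for every $n\in\N$. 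Since $k\geq 2$, the $\l$-projection $\mc{P}_{\l}(\mathscr{C}_{1})$ contains the unbounded set $\{k^{2sn}\}_{n\in\N}$, and therefore $\mathscr{C}_{1}$ is unbounded in $\R\times H^{2s}_{+}(\mathbb{T})$.

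The conceptual heart of the argument is not the iteration, which is elementary once the ingredients are in place, but the very availability of the injections $T_{k}$. This in turn rests on the scaling symmetry of the fractional Laplacian (Lemma \ref{L7.1}) together with the uniqueness clause of Crandall--Rabinowitz that pins down exactly which component $T_{k}$ lands in. Accordingly, the only step I expect to require real care is the verification, already carried out in Proposition \ref{Pr7.2}, that $T_{k}$ sends the bifurcation point $(1,0)$ to $(k^{2s},0)$ and respects the local branch structure given by Theorem \ref{LCRWw}; everything else amounts to iterating a map that has been handed to us.
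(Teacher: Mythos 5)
Your argument is correct and is essentially the same as the paper's: invoke the disjoint-or-equal dichotomy for connected components to get $\mathscr{C}_{1}=\mathscr{C}_{k}$, then iterate the continuous injection $T_{k}$ from Proposition \ref{Pr7.2} starting at $(1,0)\in\mathscr{C}_{1}$ to produce the unbounded sequence $(k^{2sn},0)\in\mathscr{C}_{1}$. Your explicit remark that the content lives in the case $k\geq 2$ is a sound reading, since both the iteration and the limit $k^{2sn}\to+\infty$ require $k>1$, exactly as in the paper's proof.
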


\begin{proof}
	Suppose that there exist $k\in\N$, such that $\mathscr{C}_{1}\cap\mathscr{C}_{k}\neq \emptyset$. Then, as they are connected components of $\mathfrak{S}$, necessarily $\mathscr{C}_{1º}=\mathscr{C}_{k}$. By Proposition \ref{Pr7.2}, the map 
	$$
	T_{k}:\mathscr{C}_{1}\longrightarrow\mathscr{C}_{k}, \quad T_{k}(\l,u) = (k^{2s}\l, k^{\frac{2s}{p-1}} u(kx)),
	$$
	is a continuous injection. We apply the action of $T_{k}$ several times on $\mathscr{C}_{1}$ as the following diagram shows:
	\begin{equation}
		\mathscr{C}_{1}\xrightarrow{T_{k}}\mathscr{C}_{1}\xrightarrow{T_{k}}\mathscr{C}_{1}\xrightarrow{T_{k}}\mathscr{C}_{1}\xrightarrow{T_{k}} \cdots
	\end{equation}
	This implies that for each $(\l,u)\in\mathscr{C}_{1}$, 
	$$
	(k^{2sq}\l, k^{\frac{2sq}{p-1}} u (k^{q}x))\in \mathscr{C}_{1}, \quad \hbox{for all} \;\; q\in\N.
	$$
	In particular, as $(1,0)\in\mathscr{C}_{1}$, we have $(k^{2sq},0)\in\mathscr{C}_{1}$ for all $q\in\N$. As 
	$$
	\lim_{q\to+\infty}k^{2sq}=+\infty,
	$$
	we conclude that $\mathscr{C}_{1}$ is unbounded.
\end{proof}

The next result extends the existence Theorem \ref{Th7.5} up to cover more range of values of the parameter $\l$. The main ingredient in its proof is Proposition \ref{Pr4.6}.

\begin{theorem}
	\label{TP0}
	The equation 
		\begin{equation}
			\label{EE0}
			(-\D)^{s}u = \l u + |u|^{p}, \quad u\in H^{2s}_{\mathbf{+}}(\mathbb{T}),
		\end{equation}
		admits at least one non-constant even solution for each
		$$
		\l\in \bigcup_{k\in\N}\left(k^{2s}, (k+1)^{2s}\right).
		$$
\end{theorem}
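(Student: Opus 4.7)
The plan is to combine the base-case existence supplied by Theorem \ref{Th7.5} with the scaling symmetry of the fractional Laplacian recorded in Lemma \ref{L7.1}. That lemma shows the map
$$
T_{k}(\l, u) = \bigl(k^{2s}\l,\; k^{2s/(p-1)} u(kx)\bigr)
$$
sends any solution of $(-\D)^{s}u = \l u + |u|^{p}$ in $H^{2s}_{+}(\mathbb{T})$ to a solution of the same equation at parameter $k^{2s}\l$. The arithmetic point is that the scaling $\mu \mapsto k^{2s}\mu$ stretches the base interval $(1, 2^{2s})$ to $(k^{2s}, (2k)^{2s})$, and this stretched interval always contains the gap $(k^{2s}, (k+1)^{2s})$ since $(k+1)/k \leq 2$ for every $k \in \mathbb{N}$.

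Concretely, fix $k\in\mathbb{N}$ and $\l \in (k^{2s}, (k+1)^{2s})$, and set $\mu := \l / k^{2s}$. From the inequality $(k+1)/k \leq 2$ we obtain $\mu \in (1, ((k+1)/k)^{2s}) \subset (1, 2^{2s})$. By Theorem \ref{Th7.5}, there exists a non-constant even solution $u_{\mu} \in H^{2s}_{+}(\mathbb{T})$ of
$$
(-\D)^{s} u_{\mu} = \mu\, u_{\mu} + |u_{\mu}|^{p}.
$$
Applying Lemma \ref{L7.1}, the rescaled function $v(x) := k^{2s/(p-1)} u_{\mu}(kx)$ lies in $H^{2s}_{+}(\mathbb{T})$ and satisfies
$$
(-\D)^{s} v = k^{2s}\mu\, v + |v|^{p} = \l v + |v|^{p},
$$
which is the sought solution at parameter $\l$.

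It remains only to verify non-constancy of $v$: if $v \equiv c$ for some $c \in \R$, the identity $v(x) = k^{2s/(p-1)} u_{\mu}(kx)$ would force $u_{\mu}$ itself to be constant on $\mathbb{T}$, contradicting the non-constancy of $u_{\mu}$ furnished by Theorem \ref{Th7.5}. Since $\l$ was arbitrary in $(k^{2s}, (k+1)^{2s})$ and $k \in \mathbb{N}$ was arbitrary, the conclusion follows on the full union $\bigcup_{k \in \mathbb{N}}(k^{2s}, (k+1)^{2s})$. There is no substantive obstacle in this argument—all the machinery (the scaling lemma, the base-case existence on $(1,2^{2s})$, the unrestricted a priori bounds from Theorem \ref{Th6.10} used inside Theorem \ref{Th7.5}) is already in hand; the proof is essentially a bookkeeping check that the scaling intervals tile the required range.
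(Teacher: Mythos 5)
Your proof is correct, and it takes a genuinely different route from the paper's. The paper's proof of Theorem \ref{TP0} runs the global bifurcation machinery again: it applies the Global Alternative Theorem \ref{TGB} to the nonlinearity $\mf{F}$, uses Lemma \ref{Lema7.1} and Proposition \ref{Pr4.6} to rule out the return-to-trivial alternative (concluding that $\mathscr{C}_{1}$ is unbounded in every case), then invokes the a priori bounds of Theorem \ref{Th6.10} to show $[\s_{1},+\infty)\subset \mc{P}_{\l}(\mathscr{C}_{1})$, and finally uses Proposition \ref{Pr5.1} and Lemma \ref{Lema7.1} to exclude constant and trivial solutions on each open interval $(\s_k,\s_{k+1})$. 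You instead observe that the scaling injection $T_k$ of Lemma \ref{L7.1} transports the base-case solutions of Theorem \ref{Th7.5} from $(1,2^{2s})$ directly into $(k^{2s},(2k)^{2s})\supset (k^{2s},(k+1)^{2s})$, using only the arithmetic bound $(k+1)/k\leq 2$ and the elementary fact that the rescaling preserves non-constancy. Your argument is shorter and more elementary at this stage; it does not require establishing unboundedness of $\mathscr{C}_{1}$ at all. The trade-off is that the paper's proof yields extra structural information — namely that the unbounded component $\mathscr{C}_{1}$ alone has $\l$-projection covering $[\s_{1},+\infty)$ — which the paper then exploits in the proof of Theorem \ref{TP} to sharpen the range in the case $p=2$. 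Your scaled solutions, by Proposition \ref{Pr7.2}, lie on $\mathscr{C}_{k}$ rather than necessarily on $\mathscr{C}_{1}$, so for the present statement this loss of information is immaterial, but it is worth being aware of the distinction if you later want to adapt the argument to Theorem \ref{TP}.
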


\begin{proof}
	We apply Theorem \ref{TGB} to the nonlinearity 
	\begin{equation}
		\label{DO}
		\mf{F}:\R\times H_{+}^{2s}(\mathbb{T})\longrightarrow L_{+}^{2}(\mathbb{T}), \quad \mf{F}(\l,u)=(-\D)^{s}u-\l u -  |u|^{p}.
	\end{equation}
	By the same arguments used in the proof of Theorem \ref{Th7.1}, the hypothesis on $\mf{F}$ of Theorem \ref{TGB} are verified. On the other hand, the linearization
	$$
	\mf{L}:\R\longrightarrow\Phi_{0}(H^{2s}_{+}(\mathbb{T}),L^{2}_{+}(\mathbb{T})), 
	\quad \mf{L}(\l)[v] = (-\D)^{s}v - \l v,
	$$
	is clearly analytic and $\chi[\mf{L}, \s_{k}]=1\in 2\N-1$ by Proposition \ref{Pr3.3}. The application of Theorem \ref{TGB} to the connected component $\mathscr{C}_{1}$, implies that or $\mathscr{C}_{1}$ is unbounded or there exists $\s_{k}\in \Sigma(\mf{L})$, $k\neq 1$, such that $(\s_{k},0)\in\mathscr{C}_{1}$. By Lemma \ref{Lema7.1}, necessarily $k>1$. If the second statement holds, then $\mathscr{C}_{1}=\mathscr{C}_{k}$ and by Proposition \ref{Pr4.6}, $\mathscr{C}_{1}$ is necessarily unbounded. Therefore, in any case, the connected component $\mathscr{C}_{1}$ is unbounded. To prove the result, we start by showing that
	\begin{equation}
		\label{Eq4.5}
		[\s_{1},+\infty)\subset\mc{P}_{\l}(\mathscr{C}_{1}),
	\end{equation}
	where $\mc{P}_{\l}:\R\times H^{2s}(\mathbb{T})\to \R$ is the $\l$-projection operator given by $\mc{P}_{\l}(\l,u)=\l$. 
	Suppose that \eqref{Eq4.5} is false. Then, there exist $0\leq\a\leq \s_{1}$ and $\b \geq \s_1$, $\a\neq\b$, such that 
	\begin{equation}
		\label{PYS}
		\mc{P}_{\l}(\mathscr{C}_{1})= I_{\a,\b},
	\end{equation}
	where $I_{\a,\b}$ is an interval of the half real line $\R_{\geq 0}$ with boundary $\{\a,\b\}$. The existence and the non-negativity of $\a$ is justified by Lemma \ref{L7.2}. Choosing $K:=[\a,\b]\subset \R_{\geq0}$ in Theorem \ref{Th6.10}, we have that $\mathscr{C}_{1}$ is bounded on $[\a,\b]\times H^{2s}_{+}(\mathbb{T})$. Hence, by \eqref{PYS}, $\mathscr{C}_{1}$ is bounded in $\R\times H^{2s}_{+}(\mathbb{T})$. This contradicts the unboundedness of $\mathscr{C}_{1}$ and concludes the proof of \eqref{Eq4.5}. 
	\par Now, let $k\in \N$. Since the inclusion
	$
	(\s_{k},\s_{k+1}) \subset [\s_{1},+\infty)\subset\mc{P}_{\l}(\mathscr{C}_{1})
	$ holds,
	we infer that for each $\l\in (\s_{k},\s_{k+1})$, there exists $u_{\l}\in H^{2s}_{+}(\mathbb{T})$ such that $(\l,u_{\l})\in\mathscr{C}_{1}$. On the other hand, as
	$$\mathscr{C}_{1}\cap\mc{T} \subset \{(\s_{k},0) : k\in\N\},$$
	necessarily $u_{\l}\not\equiv 0$. Finally, if $u$ were constant, by the uniqueness result of Proposition \ref{Pr5.1}, we would have that $u\equiv -\l^{\frac{1}{p-1}}$. Then, in this case, we would have that $\mathscr{C}_{0}=\mathscr{C}_{1}$, which contradicts Lemma \ref{Lema7.1}. The proof is concluded.
\end{proof}

Finally, we will sharpen Theorem \ref{TP0} for the case $p=2$. In this case, we have the bifurcation direction given in Theorem \ref{LCRWw}, item (c). We will exploit this property in the following.

\begin{theorem}
	\label{TP}
	The following statements hold regarding the equation
	\begin{equation}
		\label{1.40}
		(-\D)^{s}u = \l u + u^{2}, \quad x\in\mathbb{T}, \;\; u\in H^{2s}_{+}(\mathbb{T}).
	\end{equation}
	
	\begin{itemize} 
		\item[{\rm(a)}] Equation \eqref{1.40} admits at least one non-constant even solution $u_{\l}\in H^{2s}_{+}(\mathbb{T})$ for each $\l > 1$. 
		\item[{\rm(b)}] Equation \eqref{1.40}
		admits at least one non-constant strictly positive even solution $u_{\l}\in H^{2s}_{+}(\mathbb{T})$ for each $\l < -1$.
	\end{itemize}
\end{theorem}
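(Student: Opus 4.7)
The plan is to complete Theorem~\ref{TP0} at the bifurcation values $\lambda=\sigma_k:=k^{2s}$ ($k\geq 2$) to obtain part (a), and then to deduce part (b) from part (a) via the affine symmetry of Proposition~\ref{P5.3}.

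For part (a), Theorem~\ref{TP0} already supplies a non-constant even solution for every $\lambda\in\bigcup_{k\geq 1}(k^{2s},(k+1)^{2s})$, so it suffices to produce one at $\lambda=\sigma_k$ with $k\geq 2$. I would fix such $k$, choose $\lambda_n\in(\sigma_{k-1},\sigma_k)$ with $\lambda_n\uparrow\sigma_k$, and let $u_n\in H^{2s}_+(\mathbb{T})$ be a non-constant even solution with $(\lambda_n,u_n)\in\mathscr{C}_1$, as provided by the proof of Theorem~\ref{TP0}. The $H^{2s}$ a priori bound of Theorem~\ref{Th6.10} gives a uniform bound on $\{u_n\}$, and the properness of $\mathfrak{F}$ on closed bounded sets (Proposition~\ref{LF4}) yields a subsequence converging strongly in $H^{2s}_+(\mathbb{T})$ to some $u_*$. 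Since $\mathscr{C}_1$ is closed as a connected component of the closed set $\mathfrak{S}$, the limit $(\sigma_k,u_*)$ lies in $\mathscr{C}_1$ and $u_*$ is an even solution at $\sigma_k$.

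The decisive step is to show $u_*$ is non-constant, and this is exactly where the assumption $p=2$ enters. The only constant solutions at $\sigma_k$ are $0$ and $-\sigma_k$ (Proposition~\ref{Pr5.1}); the constant $-\sigma_k$ is excluded because $(\sigma_k,-\sigma_k)\in\mathscr{C}_0$ while $\mathscr{C}_1\cap\mathscr{C}_0=\emptyset$ by Lemma~\ref{Lema7.1}. To rule out $u_*\equiv 0$, I would invoke the local Crandall--Rabinowitz description (Theorem~\ref{LCRWw}(a)--(b)): there is $\rho>0$ such that any non-trivial solution in $B_\rho(\sigma_k,0)$ lies on the local bifurcating curve $(\Lambda_k(s),u_k(s))$. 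The supercritical direction $\ddot\Lambda_k(0)>0$ from Theorem~\ref{LCRWw}(c)---which is valid precisely because $p=2$---gives $\Lambda_k(s)>\sigma_k$ for $s\neq 0$, so $B_\rho(\sigma_k,0)$ contains no non-trivial solution with $\lambda<\sigma_k$. Therefore $(\lambda_n,u_n)\notin B_\rho(\sigma_k,0)$ for large $n$, which forces $\|u_n\|_{H^{2s}}\geq\rho/2$ eventually and hence $\|u_*\|_{H^{2s}}\geq\rho/2>0$.

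For part (b), given $\lambda<-1$, I would set $\mu:=-\lambda>1$, apply part (a) to obtain a non-constant even $v\in H^{2s}_+(\mathbb{T})$ solving $(-\Delta)^sv=\mu v+v^2$, and define $u:=v+\mu$; Proposition~\ref{P5.3} ensures that $u$ is a non-constant even solution at $\lambda=-\mu$. Strict positivity $u>0$ is equivalent to $v>-\mu$ pointwise. Lemma~\ref{L7} yields $v\geq -\mu$, and equality at some $x_0$ would make $x_0$ a minimum with $(-\Delta)^sv(x_0)\leq 0$, with equality only if $v$ is constant; but evaluating the equation at $x_0$ gives $(-\Delta)^sv(x_0)=\mu(-\mu)+(-\mu)^2=0$, forcing $v\equiv -\mu$, against the non-constancy of $v$. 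The chief obstacle is the non-triviality of the limit $u_*$ in part (a): a priori $u_n$ could shrink to $0$, and only the supercritical bifurcation direction (available only for $p=2$) prevents this.
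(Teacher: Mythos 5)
Your proof is correct, and it relies on the same three pillars the paper uses: the inclusion $[\sigma_1,+\infty)\subset\mathcal{P}_\lambda(\mathscr{C}_1)$ from Theorem~\ref{TP0}, the supercritical bifurcation direction $\ddot{\Lambda}_k(0)>0$ from Theorem~\ref{LCRWw}(c) (which indeed requires $p=2$), and Proposition~\ref{P5.3} together with Lemma~\ref{L7} for passing to $\lambda<-1$. The details of part~(a) are organized differently, and I think your version is the cleaner of the two. The paper runs a case analysis on whether $\mathscr{C}_1$ meets another bifurcation point $(\sigma_k,0)$: if not, the non-triviality is immediate from $\mathscr{C}_1\cap\mathcal{T}=\{(\sigma_1,0)\}$; if so, it invokes supercriticality to contradict the hypothetical emptiness of $\mathscr{C}_1$ at the level $\lambda=\sigma_k$, and then \emph{inducts} by restarting the argument from $\mathscr{C}_k$. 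You dispense with the case split and the induction altogether: for each integer $k\geq 2$ you take a sequence $(\lambda_n,u_n)\in\mathscr{C}_1$ with $\lambda_n\uparrow\sigma_k$ supplied by Theorem~\ref{TP0}, use the $H^{2s}$ a priori bound (Theorem~\ref{Th6.10}) and properness on closed bounded sets (Proposition~\ref{LF4}) to extract a strong limit $(\sigma_k,u_*)\in\mathscr{C}_1$, and then rule out $u_*\equiv 0$ directly: by the local Crandall--Rabinowitz uniqueness and the supercritical direction, no non-trivial solution in $B_\rho(\sigma_k,0)$ has $\lambda<\sigma_k$, so the $u_n$ are bounded away from zero in $H^{2s}$. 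The constant $-\sigma_k$ is excluded by $\mathscr{C}_0\cap\mathscr{C}_1=\emptyset$ (Lemma~\ref{Lema7.1}), exactly as in the paper. This sequential-compactness route replaces the paper's contradiction-plus-connectedness production of a sequence tending to $(\sigma_k,0)$, and it is arguably tighter, since the paper does not fully justify why such a sequence must exist when $\mathscr{C}_1\cap(\{\sigma_k\}\times H^{2s}_+(\mathbb{T}))$ is assumed to reduce to $\{(\sigma_k,0)\}$.

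For part~(b), the paper merely writes that it ``follows from Proposition~\ref{P5.3} and Lemma~\ref{L7},'' which only yields $u\geq 0$ at face value. You supply the missing strict-positivity step: at a point $x_0$ with $v(x_0)=-\mu$, the equation forces $(-\Delta)^s v(x_0)=-\mu^2+\mu^2=0$, while the integral representation of the fractional Laplacian at an interior minimum shows $(-\Delta)^s v(x_0)\leq 0$ with equality only when $v$ is constant; since $v$ is non-constant, the minimum is strict. This is a genuine (if small) strengthening of the paper's exposition. The only thing worth flagging is cosmetic: after $(\lambda_n,u_n)\notin B_\rho(\sigma_k,0)$ with $|\lambda_n-\sigma_k|\to 0$, the lower bound on $\|u_n\|_{H^{2s}}$ depends on the product-space norm one fixes (you wrote $\rho/2$; with the max norm it is $\rho$), but any fixed positive lower bound suffices, so nothing is at stake.
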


\begin{proof}
	Let us start by proving item (a). From the proof of Theorem \ref{TP0}, we have that the connected component $\mathscr{C}_{1}$ is unbounded and $[\s_{1},+\infty)\subset\mc{P}_{\l}(\mathscr{C}_{1})$. If $\mathscr{C}_{1}\neq \mathscr{C}_{k}$ for every $k >1$, then the statement follows directly. Indeed, in this case, $(\s_{k},0)\notin \mathscr{C}_{1}$ for every $k>1$ and hence 
	\begin{equation}
		\label{1}
	\mathscr{C}_{1}\cap\mc{T}=\{(\s_1,0)\}.
	\end{equation}
	 Therefore, as $[\s_1,+\infty)\subset\mc{P}_{\l}(\mathscr{C}_{1})$, for each $\l>\s_{1}=1$, there exists $(\l,u_{\l})\in\mathscr{C}_{1}$ and by \eqref{1}, necessarily $u_{\l}\not\equiv 0$. By Lemma \ref{Lema7.1}, $u_{\l}$ cannot be constant. Therefore, for each $\l\in(1,+\infty)$, there exists at least one non-constant even solution $(\l,u_{\l})\in\mathscr{C}_{1}$.
	\par Otherwise, choose $k>1$ the minimal integer such that $\mathscr{C}_{1}=\mathscr{C}_{k}$. Then, if $k>2$, for each integer $n\in\{2,\dots,k-1\}$, $\mathscr{C}_{1}\neq \mathscr{C}_{k}$, and hence, since
	\begin{equation}
		\label{2}
	\mathscr{C}_{1}\cap \{(\l,0)\in \mc{T} \; : \; \l<\s_{k}\} = \{(\s_{1},0)\},
	\end{equation}
	there exists $(\s_{n},u_n)\in\mathscr{C}_{1}$ with $u_n\not\equiv 0$. Consequently, for each $\l\in (\s_1,\s_k)$, there exists at least a non-constant even solution $(\l,u_\l)\in\mathscr{C}_{1}$.
	\par By Theorem \ref{LCRWw}, item (c), the bifurcation on $(\s_k,0)$ is supercritical, that is, there exists $\r>0$ such that if $(\l,u)\in B_{\r}(\s_k,0)\cap \mf{S}$ and $u\not\equiv0$, then necessarily $\l>\s_{k}$. This implies that there exists $(\s_{k},u)\in \mathscr{C}_{1}$ with $u\not\equiv 0$. Indeed, if this is not the case, we would have that $\mathscr{C}_{1}\cap (\{\s_{k}\}\times H^{2s}_{+}(\mathbb{T}))=\{(\s_{k},0)\}$ and hence by the inclusion $(\s_{1},\s_{k}]\subset\mc{P}_{\l}(\mathscr{C}_{1})$ and the identity \eqref{2}, there exists a sequence $\{(\l_n,u_n)\}_{n\in\N}\subset\mathscr{C}_{1}$ such that $\l_{n}<\s_{k}$, $u_{n}\not\equiv 0$ for each $n\in\N$ and
	$$
	\lim_{n\to+\infty}\l_n = \s_k, \quad \lim_{n\to+\infty} u_n = 0 \quad \text{in} \;\; H^{2s}_{+}(\mathbb{T}).
	$$
	This contradicts the supercriticality of the bifurcation point $(\s_k,0)$. Therefore, the equation \eqref{1.40} admits at least one non-constant even solution for each $\l\in(\s_{1},\s_{k}]$. Now, repeat the same argument for $\mathscr{C}_{k}$ (instead of $\mathscr{C}_{1}$) in an inductive manner to obtain the existence of at least one non-constant even solution for each $\l\in(\s_{1},+\infty)$. Finally, item (b) follows from Proposition \ref{P5.3} and Lemma \ref{L7}. 
\end{proof}

\section{Benjamin--Ono Equation}\label{S8}

The periodic Benjamin--Ono equation is given by
$$
\partial_{t}u+2u\partial_{x}u-\partial_{x}(\mc{H}\partial_{x}u)=0,  \;\; u=u(t,x) \quad t>0, \;\; x\in\mathbb{T},
$$
where $\mc{H}$ is the Hilbert transform given by
$$
\mc{H}u:=-i\sum_{n\in\Z}\sign(n)\widehat{u}(n)e^{inx}, \quad u\in L^{2}(\mathbb{T}).
$$
Note that $\mc{H}\partial_{x}\equiv (-\D)^{\frac{1}{2}}$. Therefore, in this case, $\mf{m}(n)=1$ for all $n\in\Z$ and the traveling wave equation \eqref{Ee} reduces to
\begin{equation}
	\label{rr}
(-\D)^{\frac{1}{2}}u = \l u + u^{2}, \quad u\in H^{1}_{+}(\mathbb{T}),
\end{equation}
where $c:=-\l$ is the velocity of the traveling wave. This equation can be solved analytically as the model is completely integrable. The aim of this section is to write all the solutions of \eqref{rr} in a closed manner following the works of Benjamin \cite{Be} and Amick--Toland \cite{AT}, and to plot the global bifurcation diagram of equation \eqref{rr}. We will see that the structure of this diagram coincides with the results obtained along this paper.

Benjamin \cite{Be} provided some analytically closed formula for some solutions. Here, we mimic its proof to link this solutions to the connected components $\mathscr{C}_{k}$, $k\geq 1$. Let $u\in H^{1}_{\mathbf{+}}(\mathbb{T})$. Then, we have
\begin{equation*}
	u(x)=\sum_{n\in\Z}\widehat{u}(n)e^{inx}, \quad u^{2}(x)=\sum_{n\in\Z}\left(\sum_{k\in\Z}\widehat{u}(k)\widehat{u}(n-k)\right)e^{inx}.
\end{equation*}
Introducing this formulas in equation \eqref{rr}, we obtain
\begin{align*}
	\sum_{n\in\Z}(|n|-\l)\widehat{u}(n)e^{inx} = \sum_{n\in\Z}\left(\sum_{k\in\Z}\widehat{u}(k)\widehat{u}(n-k)\right)e^{inx}.
\end{align*}
Therefore, we deduce the identities
\begin{equation*}
	(|n|-\l)\widehat{u}(n) = \sum_{k\in\Z}\widehat{u}(k)\widehat{u}(n-k), \quad n\in\Z.
\end{equation*}
Now, if we try with the ansatz
$$
\widehat{u}(n)=\alpha e^{-\beta |n|}, \quad n\in\Z,
$$
for some $\alpha\neq 0$ and $\beta>0$, we deduce that
\begin{align*}
	(|n|-\l)e^{-\beta |n|} = \alpha \sum_{k\in\Z}e^{-\beta(|k|+|n-k|)}=\alpha e^{-\beta|n|}(|n|+\coth(\beta)), \quad n\in\Z.
\end{align*}
Consequently, 
$$
|n|-\l = \alpha(|n|+\coth(\beta)), \quad n\in\Z,
$$
and considering $|\l|>1$, we must have
$
\alpha = 1$ and $\beta(\l) = - \tanh^{-1}(\tfrac{1}{\l})
$.
Therefore, our solution can be expressed as
$$
u_{\l}^{+}(x) = \sum_{n\in\Z}e^{-\beta(\l)|n|+inx}=\frac{\sinh(\beta(\l))}{\cosh(\beta(\l))-\cos(x)}, \quad \l<-1.
$$
Note that the requirement $\l<-1$ is necessary for the convergence of the series. We can rewrite $u_{\l}$ as
$$
u^{+}_{\l}(x)=\frac{1}{-\l+\sqrt{\l^2-1}\cos(x)}, \quad x\in\mathbb{T}.
$$
As equation \eqref{rr} is invariant by translations and $\cos(x+\pi)=-\cos(x)$ is also an even function, we can also check that
$$
u^{-}_{\l}(x)=\frac{1}{-\l-\sqrt{\l^2-1}\cos(x)}, \quad x\in\mathbb{T},
$$
is also a solution of \eqref{rr}. Note that $u^{\pm}_{\l}(x)>0$ for each $x\in\mathbb{T}$. 
Now, we apply the affine operator $T: \mf{C}_{-} \to \mf{C}_{+}$ of Proposition \ref{P5.3} to $(\l,u_{\l}^{\pm})$, $\l<-1$, to obtain the new solutions
$$
(\l,u^{\pm}_{1,\l}):=(\l, u^{\pm}_{-\l}-\l) = \left(\l, \frac{1}{\l\pm\sqrt{\l^2-1}\cos(x)}-\l\right)\in \R\times H^{1}_{+}(\mathbb{T}), \quad \l>1.
$$
We can also apply the continuous injections $T_{k}:\mf{S}\to \mf{S}$ of Proposition \ref{Pr7.20} to $(\l,u^{\pm}_{1,\l})$, $\l>1$, to obtain the new family of solutions
$$
(\l,u^{\pm}_{k,\l}):= \left(\l, \frac{k^2}{\l\pm\sqrt{\l^2-k^2}\cos(kx)}-\l\right)\in \R\times H^{1}_{+}(\mathbb{T}), \quad \l>k.
$$
By the uniqueness result of Amick--Toland \cite{AT}, we deduce that for each $k\in\N$, the connected component $\mathscr{C}_{k}$ is given by
$$
\mathscr{C}_{k}=\left\{(\l,u^{\pm}_{k,\l})\in \R\times H^{1}_{+}(\mathbb{T}) \; : \; \l\geq k \right\}\subset \R_{\geq k}\times H^{1}_{+}(\mathbb{T}),
$$
and for $k=0$, by Proposition \ref{Le7.1}, the connected component $\mathscr{C}_{0}$ for $\l>0$, is given by
$$
\mathscr{C}_{0}\cap \left(\R_{\geq 0}\times H^{1}_{+}(\mathbb{T})\right)=\left\{(\l,-\l)\in \R\times H^{1}_{+}(\mathbb{T}) \; : \; \l\geq 0 \right\}.
$$
Therefore, we deduce that the connected components $\{\mathscr{C}_{k}\}_{k\geq 0}$ are mutually disjoint and for each $k\geq 1$,
$$
\mc{P}_{\l}(\mathscr{C}_{k})=[k,+\infty).
$$
Consequently, the precise bifurcation diagram of equation \eqref{rr} is the one shown in Figure \ref{F3}. Recall that we are representing the value of the parameter $\l$ in abscissas versus the norm $\|u\|_{H^{2s}}$ or $-\|u\|_{H^{2s}}$. This differentiation is made in order to express the multiplicity of solutions.
Therefore, equation \eqref{rr} admits at least one non-constant even solution $u^{+}_{1,\l}$ for each $\l > 1$
and admits at least one non-constant strictly positive even solution $u^{+}_{\l}$ for each $\l < -1$. This is entirely consistent with the thesis of Theorem \ref{TP}.

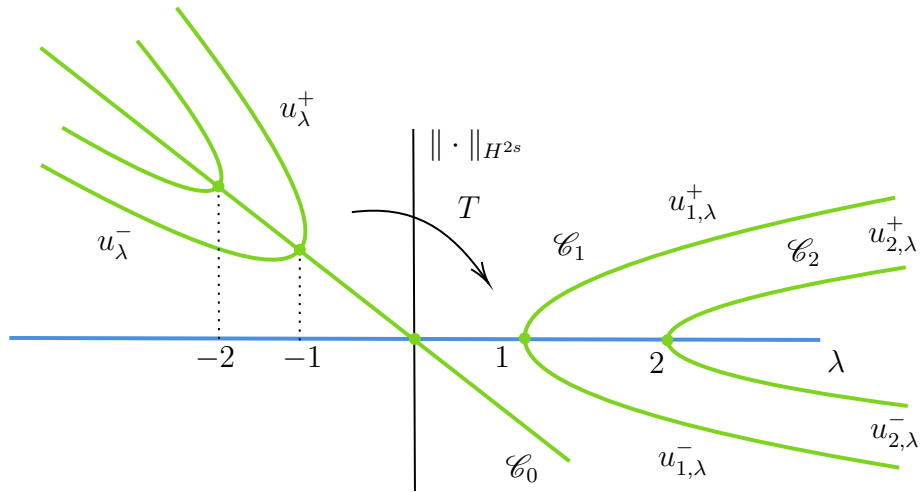
\begin{figure}[h!]

\tikzset{every picture/.style={line width=0.75pt}} 

\begin{tikzpicture}[x=0.75pt,y=0.75pt,yscale=-1,xscale=1]
	
	\draw    (280,73) -- (280.84,255.76) ;
	\draw [color={rgb, 255:red, 74; green, 144; blue, 226 }  ,draw opacity=1 ][line width=1.5]    (76.28,178.5) -- (485.28,179.5) ;
	\draw [color={rgb, 255:red, 126; green, 211; blue, 33 }  ,draw opacity=1 ][line width=1.5]    (92,32) -- (358.84,240.76) ;
	\draw  [color={rgb, 255:red, 126; green, 211; blue, 33 }  ,draw opacity=1 ][line width=1.5]  (523.06,107.68) .. controls (273.22,156.87) and (273.9,202.31) .. (525.11,243.97) ;
	\draw  [color={rgb, 255:red, 126; green, 211; blue, 33 }  ,draw opacity=1 ][line width=1.5]  (92.3,91.2) .. controls (243.2,175.06) and (265.99,148.6) .. (160.65,11.82) ;
	\draw    (249,115) .. controls (279.54,111.06) and (298.43,122.64) .. (317.14,149.75) ;
	\draw [shift={(318,151)}, rotate = 235.84] [color={rgb, 255:red, 0; green, 0; blue, 0 }  ][line width=0.75]    (10.93,-3.29) .. controls (6.95,-1.4) and (3.31,-0.3) .. (0,0) .. controls (3.31,0.3) and (6.95,1.4) .. (10.93,3.29)   ;
	\draw  [color={rgb, 255:red, 126; green, 211; blue, 33 }  ,draw opacity=1 ][fill={rgb, 255:red, 126; green, 211; blue, 33 }  ,fill opacity=1 ] (333.69,178.65) .. controls (333.69,177.27) and (334.81,176.15) .. (336.19,176.15) .. controls (337.57,176.15) and (338.69,177.27) .. (338.69,178.65) .. controls (338.69,180.03) and (337.57,181.15) .. (336.19,181.15) .. controls (334.81,181.15) and (333.69,180.03) .. (333.69,178.65) -- cycle ;
	\draw  [dash pattern={on 0.84pt off 2.51pt}]  (222.78,134) -- (222.73,180.14) ;
	\draw  [color={rgb, 255:red, 126; green, 211; blue, 33 }  ,draw opacity=1 ][fill={rgb, 255:red, 126; green, 211; blue, 33 }  ,fill opacity=1 ] (220.28,134) .. controls (220.28,132.62) and (221.4,131.5) .. (222.78,131.5) .. controls (224.16,131.5) and (225.28,132.62) .. (225.28,134) .. controls (225.28,135.38) and (224.16,136.5) .. (222.78,136.5) .. controls (221.4,136.5) and (220.28,135.38) .. (220.28,134) -- cycle ;
	\draw  [color={rgb, 255:red, 126; green, 211; blue, 33 }  ,draw opacity=1 ][fill={rgb, 255:red, 126; green, 211; blue, 33 }  ,fill opacity=1 ] (278.28,179) .. controls (278.28,177.62) and (279.4,176.5) .. (280.78,176.5) .. controls (282.16,176.5) and (283.28,177.62) .. (283.28,179) .. controls (283.28,180.38) and (282.16,181.5) .. (280.78,181.5) .. controls (279.4,181.5) and (278.28,180.38) .. (278.28,179) -- cycle ;
	\draw  [color={rgb, 255:red, 126; green, 211; blue, 33 }  ,draw opacity=1 ][line width=1.5]  (528.35,142.84) .. controls (367.79,168.59) and (368.14,191.92) .. (529.4,212.83) ;
	\draw  [color={rgb, 255:red, 126; green, 211; blue, 33 }  ,draw opacity=1 ][line width=1.5]  (102.96,72.3) .. controls (195.47,126.51) and (208.05,111.9) .. (140.71,28.46) ;
	\draw  [color={rgb, 255:red, 126; green, 211; blue, 33 }  ,draw opacity=1 ][fill={rgb, 255:red, 126; green, 211; blue, 33 }  ,fill opacity=1 ] (405.69,179.65) .. controls (405.69,178.27) and (406.81,177.15) .. (408.19,177.15) .. controls (409.57,177.15) and (410.69,178.27) .. (410.69,179.65) .. controls (410.69,181.03) and (409.57,182.15) .. (408.19,182.15) .. controls (406.81,182.15) and (405.69,181.03) .. (405.69,179.65) -- cycle ;
	\draw  [dash pattern={on 0.84pt off 2.51pt}]  (181.78,102) -- (182,179) ;
	\draw  [color={rgb, 255:red, 126; green, 211; blue, 33 }  ,draw opacity=1 ][fill={rgb, 255:red, 126; green, 211; blue, 33 }  ,fill opacity=1 ] (179.28,102) .. controls (179.28,100.62) and (180.4,99.5) .. (181.78,99.5) .. controls (183.16,99.5) and (184.28,100.62) .. (184.28,102) .. controls (184.28,103.38) and (183.16,104.5) .. (181.78,104.5) .. controls (180.4,104.5) and (179.28,103.38) .. (179.28,102) -- cycle ;
	
	\draw (286.03,71.4) node [anchor=north west][inner sep=0.75pt]    {$\| \cdot \| _{H^{2s}}$};
	\draw (487.28,182.9) node [anchor=north west][inner sep=0.75pt]    {$\lambda $};
	\draw (325.04,236.08) node [anchor=north west][inner sep=0.75pt]    {$\mathscr{C}_{0}$};
	\draw (349.04,124.08) node [anchor=north west][inner sep=0.75pt]    {$\mathscr{C}_{1}$};
	\draw (301,105.4) node [anchor=north west][inner sep=0.75pt]    {$T$};
	\draw (319.46,181.19) node [anchor=north west][inner sep=0.75pt]    {$1$};
	\draw (212.46,181.19) node [anchor=north west][inner sep=0.75pt]    {$-1$};
	\draw (467.04,127.08) node [anchor=north west][inner sep=0.75pt]    {$\mathscr{C}_{2}$};
	\draw (397.46,184.19) node [anchor=north west][inner sep=0.75pt]    {$2$};
	\draw (168.46,181.19) node [anchor=north west][inner sep=0.75pt]    {$-2$};
	\draw (407,99.4) node [anchor=north west][inner sep=0.75pt]    {$u_{1,\lambda }^{+}$};
	\draw (402,228.4) node [anchor=north west][inner sep=0.75pt]    {$u_{1,\lambda }^{-}$};
	\draw (508,115.4) node [anchor=north west][inner sep=0.75pt]    {$u_{2,\lambda }^{+}$};
	\draw (509,214.4) node [anchor=north west][inner sep=0.75pt]    {$u_{2,\lambda }^{-}$};
	\draw (211,52.4) node [anchor=north west][inner sep=0.75pt]    {$u_{\lambda }^{+}$};
	\draw (119,119.4) node [anchor=north west][inner sep=0.75pt]    {$u_{\lambda }^{-}$};

\end{tikzpicture}

	\caption{Bifurcation diagram of equation \eqref{rr}}
	\label{F3}
\end{figure}

\appendix

\section{Preliminaries on nonlinear spectral theory}
\label{A1}
In this subsection we collect some fundamental concepts about nonlinear spectral theory that will be used throughout the article. We start with some definitions. A \emph{Fredholm operator family} is any continuous map $\mathfrak{L}\in \mathcal{C}([a,b],\Phi_{0}(U,V))$.  For any given $\mathfrak{L}\in \mathcal{C}([a,b],\Phi_{0}(U,V))$, it is said that $\lambda\in[a,b]$ is a \emph{generalized eigenvalue} of $\mathfrak{L}$ if $\mathfrak{L}(\lambda)\notin GL(U,V)$, and the \emph{generalized spectrum} of $\mathfrak{L}$, $\Sigma(\mathfrak{L})$,  is defined through   	
\begin{equation*}
	\Sigma(\mathfrak{L}):=\{\lambda\in [a,b]: \mathfrak{L}(\lambda)\notin GL(U,V)\}.
\end{equation*}
One of the cornerstones of spectral theory is the concept of algebraic multiplicity. Algebraic multiplicity is classically defined for eigenvalues of compact operators in Banach spaces. Let $K: U\to U$ be a compact linear operator on the $\mathbb{K}$-Banach space $U$, $\mathbb{K}\in\{\mathbb{R},\mathbb{C}\}$. We set
\begin{equation}
	\mathfrak{L}(\lambda):=K-\lambda I_U, \quad \lambda\in J_{\lambda_0},
	\label{(8.1)}
\end{equation}
where $J_{\lambda_0}\subset \mathbb{K}$ is a neighbourhood of $\lambda_0$ and $\lambda_0$ is an eigenvalue of $K$. Then, the
classical algebraic multiplicity\index{multiplicity!classical} of
$\lambda_0$ as an eigenvalue of $K$ is defined through
\begin{equation}
	\mf{m}_{\alg}[K,\lambda_0] := \dim \bigcup_{\mu=1}^\infty
	N[(K-\lambda_0  I_U)^\mu]
	\label{(8.2)}.
\end{equation}
By Fredholm's theorem, for $\lambda_{0}\neq 0$, $\mf{L}:J_{\lambda_0}\to \Phi_{0}(U)$ where $\Phi_{0}(U)$ denotes the space of Fredholm operators of index zero $T:U\to U$. 
\par In 1988, J. Esquinas and J. L\'{o}pez-G\'{o}mez \cite{Es,ELG}, inspired by the work of Krasnoselski \cite{Kr}, Rabinowitz \cite{Ra} and Magnus \cite{Ma}, generalized the concept of algebraic multiplicity to every Fredholm operator family $\mf{L}:[a,b]\to \Phi_{0}(U,V)$, not necessarily of the form $\mathfrak{L}(\lambda)=K-\lambda I_U$, $U=V$, with $K$ compact. They denote it by $\chi[\mf{L},\lambda_{0}]$ and they proved that this concept is consistent with the classical algebraic multiplicity, that is, $\chi[K-\lambda I_{U},\lambda_{0}]=\mf{m}_{\alg}[K,\lambda_0]$ with $K$ compact and that it shares many properties of its classical counterpart. This theory was subsequently refined in the monograph \cite{LG01}.
We proceed to define the generalized algebraic multiplicity through the concept of algebraic eigenvalue going back to \cite{LG01}. 

\begin{definition}
	\label{de2.1}
	Let $\mathfrak{L}\in \mathcal{C}([a,b], \Phi_{0}(U,V))$ and $\kappa\in\mathbb{N}$. A generalized eigenvalue $\lambda_{0}\in\Sigma(\mathfrak{L})$ is said to be $\kappa$-algebraic if there exists $\varepsilon>0$ such that
	\begin{enumerate}
		\item[{\rm (a)}] $\mathfrak{L}(\lambda)\in GL(U,V)$ if $0<|\lambda-\lambda_0|<\varepsilon$;
		\item[{\rm (b)}] There exists $C>0$ such that
		\begin{equation}
			\label{2.2}
			\|\mathfrak{L}^{-1}(\lambda)\|_{\mc{L}}<\frac{C}{|\lambda-\lambda_{0}|^{\kappa}}\quad\hbox{if}\;\;
			0<|\lambda-\lambda_0|<\varepsilon;
		\end{equation}
		\item[{\rm (c)}] $\kappa$ is the minimal integer for which \eqref{2.2} holds.
	\end{enumerate}
\end{definition}
Throughout this paper, the set of $\kappa$-algebraic eigenvalues of $\mathfrak{L}$ is  denoted by $\Alg_\kappa(\mathfrak{L})$, and the set of \emph{algebraic eigenvalues} by
\[
\Alg(\mathfrak{L}):=\bigcup_{\kappa\in\mathbb{N}}\Alg_\kappa(\mathfrak{L}).
\]
We will construct an infinite dimensional analogue of the classical algebraic multiplicity for algebraic eigenvalues. It can be carried out through the theory of Esquinas and L\'{o}pez-G\'{o}mez
\cite{ELG},  where the following pivotal concept, generalizing the transversality condition of
Crandall and Rabinowitz \cite{CR},  was introduced. Throughout this paper, we set
$$\mathfrak{L}_{j}:=\frac{1}{j!}\mathfrak{L}^{(j)}(\lambda_{0}), \quad 1\leq j\leq r,$$
should these derivatives exist.

\begin{definition}
	\label{de2.3}
	Let $\mathfrak{L}\in \mathcal{C}^{r}([a,b],\Phi_{0}(U,V))$ and $1\leq \kappa \leq r$. Then, a given $\lambda_{0}\in \Sigma(\mathfrak{L})$ is said to be a $\kappa$-transversal eigenvalue of $\mathfrak{L}$ if
	\begin{equation*}
		\bigoplus_{j=1}^{\kappa}\mathfrak{L}_{j}\left(\bigcap_{i=0}^{j-1}N(\mathfrak{L}_{i})\right)
		\oplus R(\mathfrak{L}_{0})=V\;\; \hbox{with}\;\; \mathfrak{L}_{\kappa}\left(\bigcap_{i=0}^{\kappa-1}N(\mathfrak{L}_{i})\right)\neq \{0\}.
	\end{equation*}
\end{definition}

For these eigenvalues, the following generalized concept of algebraic multiplicity can be introduced:
\begin{equation}
	\label{ii.3}
	\chi[\mathfrak{L}, \lambda_{0}] :=\sum_{j=1}^{\kappa}j\cdot \dim \mathfrak{L}_{j}\left(\bigcap_{i=0}^{j-1}N[\mathfrak{L}_{i}]\right).
\end{equation}
In particular, when $N[\mf{L}_0]=\mathrm{span}\{\v\}$ for some $\v\in U$ such that $\mf{L}_1\v\notin R[\mf{L}_0]$, then
\begin{equation}
	\label{ii.4}
	\mf{L}_1(N[\mf{L}_0])\oplus R[\mf{L}_0]=V
\end{equation}
and hence, $\l_0$ is a 1-transversal eigenvalue of $\mf{L}(\l)$ with $\chi[\mf{L},\l_0]=1$. The transversality condition \eqref{ii.4} goes back to Crandall and Rabinowitz \cite{CR}. More generally, under condition \eqref{ii.4}, it holds that
\begin{equation}
	\label{CTCR}
	\chi[\mf{L},\l_0]=\dim N[\mf{L}_0].
\end{equation}
According to Theorems 4.3.2 and 5.3.3 of \cite{LG01}, for every $\mathfrak{L}\in \mathcal{C}^{r}([a,b], \Phi_{0}(U,V))$, $\kappa\in\{1,2,...,r\}$ and $\lambda_{0}\in \Alg_{\kappa}(\mathfrak{L})$, there exists a polynomial $\Phi: [a,b]\to \mathcal{L}(U)$ with $\Phi(\lambda_{0})=I_{U}$ such that $\lambda_{0}$ is a $\kappa$-transversal eigenvalue of the path $\mathfrak{L}^{\Phi}(\l):=\mathfrak{L}(\l)\circ\Phi(\l)$
and $\chi[\mathfrak{L}^{\Phi},\lambda_{0}]$ is independent of the curve of \emph{trasversalizing local isomorphisms} $\Phi$ chosen to transversalize $\mathfrak{L}$ at $\lambda_0$. Therefore, the following concept of multiplicity
is consistent
\begin{equation}
	\label{ii.6}
	\chi[\mf{L},\l_0]:= \chi[\mathfrak{L}^{\Phi},\lambda_{0}],
\end{equation}
and it can be easily extended by setting
$\chi[\mathfrak{L},\lambda_0] =0$ if $\lambda_0\notin\Sigma(\mathfrak{L})$ and
$\chi[\mathfrak{L},\lambda_0] =+\infty$ if $\lambda_0\in \Sigma(\mathfrak{L})
\setminus \Alg(\mathfrak{L})$ and $r=+\infty$. Thus, $\chi[\mathfrak{L},\lambda]$ is well defined for all  $\lambda\in [a,b]$ of any smooth path $\mathfrak{L}\in \mathcal{C}^{\infty}([a,b],\Phi_{0}(U,V))$.

\section{Preliminaries on Bifurcation Theory}\label{Se5}
In this section we review some abstract results of bifurcation theory that will be used through the analysis of equation \eqref{e}.

\subsection{The Crandall--Rabinowitz theorem}\label{SCRT} Here we recall the celebrated Crandall--Rabinowitz bifurcation theorem and some of its consequences. It was stated and proved in \cite{CR, CRex}. Let $(U,V)$ be a pair of real Banach spaces and consider a map $\mf{F}:\R\times U\to V$ of class $\mc{C}^{r}$, $r\geq 1$, satisfying the following assumptions:
\begin{enumerate}
	\item[(F1)] $\mathfrak{F}(\lambda,0)=0$ for every $\lambda\in\mathbb{R}$;
	\item[(F2)] $\mf{L}(\l):=\partial_{u}\mathfrak{F}(\lambda,0)\in\Phi_{0}(U,V)$ for each $\lambda\in\mathbb{R}$;
	\item[(F3)] $N[\mf{L}(\l_{0})]=\spann\{\varphi\}$ for some $\varphi\in U\backslash\{0\}$ and $\l_{0}\in \R$.
\end{enumerate}
The theorem reads as follows.
\begin{theorem}
	\label{Cr-Rb} 
	Let $r\geq 2$, $\mathfrak{F}\in\mc{C}^{r}(\R\times U,V)$ be a map satisfying conditions {\rm (F1)--(F3)} and $\l_{0}\in\Sigma(\mf{L})$ be a $1$-transversal eigenvalue of $\mf{L}$, that is, 
	$$\mf{L}_{1}(N[\mf{L}_{0}])\oplus R[\mf{L}_{0}]=V.$$
	Let $Y\subset U$ be a subspace	such that
	$N[\mathfrak{L}_0] \oplus Y = U$.
	Then, there exist $\e>0$ and two maps of class $\mc{C}^{r-1}$,
	$$
	\L: (-\e,\e) \longrightarrow \R, \qquad \G: (-\e,\e)\longrightarrow Y,
	$$
	such that $\L(0)=\l_0$, $\G(0)=0$,
	and for each $s\in(-\e,\e)$,
	\begin{equation}
		\mathfrak{F}(\L(s),u(s))=0, \quad u(s):= s(\v+\G(s)).
		\label{(2.2.1)}
	\end{equation}
	Moreover,  $\r >0$ exists such that if $\mathfrak{F}(\l,u)=0$ and
	$(\l,u)\in B_\r(\l_0,0)$, then either $u = 0$ or
	$(\l,u)=(\L(s),u(s))$ for some $s\in(-\e,\e)$.
\end{theorem}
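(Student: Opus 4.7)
The plan is to reduce the bifurcation equation to a finite-dimensional transversal crossing via a Lyapunov--Schmidt scaling trick and then invoke the implicit function theorem. First I would fix a topological complement $Y$ of $N[\mathfrak{L}_{0}]=\spann\{\varphi\}$ in $U$, so that every element of $U$ near $0$ has the form $s(\varphi+y)$ with $s\in\R$ small and $y\in Y$ small. Using hypothesis (F1), the Hadamard-type identity
\[
\mathfrak{F}(\lambda,s(\varphi+y))=s\int_{0}^{1}\partial_{u}\mathfrak{F}(\lambda,ts(\varphi+y))[\varphi+y]\,dt
\]
motivates the auxiliary map $G:\R\times\R\times Y\longrightarrow V$ defined by
\[
G(s,\lambda,y):=\int_{0}^{1}\partial_{u}\mathfrak{F}(\lambda,ts(\varphi+y))[\varphi+y]\,dt,
\]
which is of class $\mathcal{C}^{r-1}$ (since $\partial_{u}\mathfrak{F}\in\mathcal{C}^{r-1}$) and, for $s\neq 0$, satisfies $G(s,\lambda,y)=0$ if and only if $\mathfrak{F}(\lambda,s(\varphi+y))=0$.

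Next I would evaluate at the candidate bifurcation point. Since $G(0,\lambda_{0},0)=\mathfrak{L}(\lambda_{0})\varphi=\mathfrak{L}_{0}\varphi=0$, the point is a zero of $G$, and the key computation is
\[
\partial_{\lambda}G(0,\lambda_{0},0)=\mathfrak{L}_{1}\varphi,\qquad \partial_{y}G(0,\lambda_{0},0)[z]=\mathfrak{L}_{0}z\quad(z\in Y).
\]
The $1$-transversality assumption $\mathfrak{L}_{1}(N[\mathfrak{L}_{0}])\oplus R[\mathfrak{L}_{0}]=V$ says $\mathfrak{L}_{1}\varphi$ spans a complement of $R[\mathfrak{L}_{0}]$; moreover, because $\mathfrak{L}_{0}$ is Fredholm of index zero and $Y$ is a topological complement of $N[\mathfrak{L}_{0}]$, the restriction $\mathfrak{L}_{0}|_{Y}:Y\to R[\mathfrak{L}_{0}]$ is a topological isomorphism. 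Hence the partial derivative
\[
D_{(\lambda,y)}G(0,\lambda_{0},0):\R\times Y\longrightarrow V,\qquad (\mu,z)\longmapsto \mu\mathfrak{L}_{1}\varphi+\mathfrak{L}_{0}z,
\]
is a topological isomorphism. The implicit function theorem then produces $\mathcal{C}^{r-1}$ functions $\Lambda(s),\Gamma(s)$ on some $(-\varepsilon,\varepsilon)$ with $\Lambda(0)=\lambda_{0}$, $\Gamma(0)=0$, and $G(s,\Lambda(s),\Gamma(s))\equiv 0$; unwinding the definition for $s\neq 0$ delivers \eqref{(2.2.1)}.

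Finally, for the local uniqueness claim, I would use both parts of the IFT output together with the decomposition $u=s(\varphi+y)$. Given a small solution $(\lambda,u)$ of $\mathfrak{F}(\lambda,u)=0$ with $u\neq 0$, I would write $u=s\varphi+w$ with $w\in Y$ (uniquely, for some small $s,w$), set $y:=w/s$, and observe that $G(s,\lambda,y)=0$. The uniqueness part of the IFT in a neighborhood of $(0,\lambda_{0},0)$ then forces $(\lambda,y)=(\Lambda(s),\Gamma(s))$, which translates back into $(\lambda,u)=(\Lambda(s),u(s))$. The only delicate point, and the step I would handle most carefully, is justifying that the scaling $u\mapsto(s,y)$ is itself a local $\mathcal{C}^{r}$-diffeomorphism off the trivial branch in a ball $B_{\rho}(\lambda_{0},0)$, so that the IFT's local uniqueness in $(s,\lambda,y)$-coordinates is genuinely equivalent to uniqueness in $(\lambda,u)$-coordinates; this requires shrinking $\rho$ to ensure that $s\neq 0$ corresponds to $u\neq 0$ and that the projection onto $N[\mathfrak{L}_{0}]$ is nonzero for such $u$.
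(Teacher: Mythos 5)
The paper states this Crandall--Rabinowitz theorem as a recalled preliminary, citing \cite{CR,CRex}, and supplies no proof of its own, so there is no internal proof to compare against; I assess your argument on its merits. Your existence argument is the standard Hadamard blow-up and is complete: the factorization $\mathfrak{F}(\lambda,s(\varphi+y))=s\,G(s,\lambda,y)$ uses (F1), the computations $\partial_\lambda G(0,\lambda_0,0)=\mathfrak{L}_1\varphi$ and $\partial_y G(0,\lambda_0,0)=\mathfrak{L}_0|_Y$ are correct, and $1$-transversality together with the Fredholm-of-index-zero property does make $D_{(\lambda,y)}G(0,\lambda_0,0)$ a topological isomorphism, so the implicit function theorem yields the $\mathcal{C}^{r-1}$ branch $(\Lambda,\Gamma)$.

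The uniqueness argument has a genuine gap, and the way you flag the delicate point mischaracterizes what needs proving. The blow-up $(s,y)\mapsto s(\varphi+y)$, restricted to $|s|<\varepsilon$, $\|y\|<\delta$, has image $\{s\varphi+w : |s|<\varepsilon,\ \|w\|<\delta|s|\}$, a cusp pinched at the origin. It is not a diffeomorphism onto a punctured neighbourhood of $0$ in $U$, no matter how small $\rho$ is taken: any nonzero $u=s\varphi+w$ with $\|w\|\ge\delta|s|$ --- in particular any nonzero $u\in Y$ --- lies outside the image. So the IFT's uniqueness in $(s,\lambda,y)$-coordinates does not automatically transfer to $(\lambda,u)$-coordinates, and you cannot even define $y=w/s$ if $s=0$. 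What the argument actually needs is an a priori estimate showing that every small solution $u=s\varphi+w$ of $\mathfrak{F}(\lambda,u)=0$ satisfies $\|w\|=o(|s|)$ as $(\lambda,u)\to(\lambda_0,0)$. One clean way: let $P$ be the projection of $V$ onto $\spann\{\mathfrak{L}_1\varphi\}$ along $R[\mathfrak{L}_0]$, and write the Hadamard identity as $A(\lambda,u)[s\varphi+w]=0$ with $A(\lambda,u):=\int_0^1\partial_u\mathfrak{F}(\lambda,tu)\,dt$; applying $(I-P)$ gives $w=-s\bigl[(I-P)A(\lambda,u)|_Y\bigr]^{-1}(I-P)A(\lambda,u)\varphi$. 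Since $A(\lambda_0,0)=\mathfrak{L}_0$, the operator $(I-P)A(\lambda,u)|_Y$ is a uniformly invertible perturbation of $\mathfrak{L}_0|_Y:Y\to R[\mathfrak{L}_0]$, while $(I-P)A(\lambda,u)\varphi\to(I-P)\mathfrak{L}_0\varphi=0$; hence $\|w\|/|s|\to 0$. This forces $s\ne 0$ for nontrivial $u$ and puts $y=w/s$ into the IFT's uniqueness neighbourhood after shrinking $\rho$. Without some estimate of this kind the uniqueness claim is not proved.
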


	If $r=1$, the same conclusion holds provided that
	$\partial^{2}_{\l u}\mathfrak F$ exists and is continuous.

We can also compute the bifurcation direction of the curve $s\mapsto (\l(s),u(s))$ in terms of the derivatives of $\mf{F}$. Let $\varphi^{\ast}\in V^{\ast}$ such that
$$
R[\mf{L}(\l_0)]=\{v\in V \; : \; \langle v, \varphi^{\ast}\rangle =0 \},
$$
where $\langle \cdot, \cdot \rangle : V\times V^{\ast}\to\R$ denotes the duality pairing on $V$. The following result can be found, for instance, in \cite{Ki}.

\begin{theorem}
	\label{BD}
	Under the assumptions of Theorem \ref{Cr-Rb}, the following identity holds:
	\begin{align*}
		\dot\L(0)=-\frac{1}{2}\frac{\langle \partial^{2}_{uu}\mf{F}(\l_{0},0)[\varphi,\varphi],\varphi^{\ast}\rangle}{\langle \partial^{2}_{\l u}\mf{F}(\l_{0},0)[\varphi],\varphi^{\ast}\rangle}.
	\end{align*}
	Moreover, if $r\geq 3$ and $\partial^{2}_{uu}\mf{F}(\l_{0},0)[\varphi,\varphi]\in R[\mf{L}_{0}]$, then
	\begin{equation}
		\ddot{\L}(0)=-\frac{1}{3}\frac{\langle\partial^{3}_{uuu}\mf{F}(\l_{0},0)[\varphi,\varphi,\varphi],\varphi^{\ast}\rangle+3\langle \partial^{2}_{uu}\mf{F}(\l_0,0)[\varphi,\phi],\varphi^{\ast}\rangle}{\langle \partial^{2}_{\l u}\mf{F}(\l_{0},0)[\varphi],\varphi^{\ast}\rangle},
	\end{equation}
	where $\phi\in U$ is any vector satisfying $\partial^{2}_{uu}\mf{F}(\l_0,0)[\varphi,\varphi]+\partial_{u}\mf{F}(\l_0,0)[\phi]=0$.
\end{theorem}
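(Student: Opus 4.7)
The strategy is implicit differentiation of the identity $H(s) := \mf{F}(\Lambda(s),u(s)) \equiv 0$, combined with the duality pairing against $\varphi^{\ast}$ to kill the terms lying in $R[\mf{L}_0]$. I first record two structural facts: since $\mf{F}(\lambda,0)=0$ for all $\lambda$, every pure $\lambda$-derivative of $\mf{F}$ vanishes at $(\lambda_0,0)$, i.e.\ $\partial^{j}_{\lambda}\mf{F}(\lambda_0,0)=0$ for all $j\geq 0$; and the curves satisfy $u(0)=0$, $\dot u(0)=\varphi$, $\ddot u(0)=2\Gamma'(0)$, $\dddot u(0)=3\Gamma''(0)$. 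The transversality condition $\mf{L}_1(N[\mf{L}_0])\oplus R[\mf{L}_0]=V$ is equivalent to $\langle\partial^{2}_{\lambda u}\mf{F}(\lambda_0,0)[\varphi],\varphi^{\ast}\rangle\neq 0$, which will act as the nonvanishing denominator in both formulas.

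For $\dot\Lambda(0)$, I would apply the chain rule to obtain
\begin{equation*}
H''(s) = h_{\lambda\lambda}(\Lambda')^{2}+2h_{\lambda u}[\dot u]\Lambda'+h_{\lambda}\Lambda''+h_{uu}[\dot u,\dot u]+h_{u}[\ddot u],
\end{equation*}
(with $h=\mf{F}$ evaluated along the curve). Setting $s=0$ and using the vanishing of pure $\lambda$-derivatives at $(\lambda_0,0)$, this reduces to $2\dot\Lambda(0)\partial^{2}_{\lambda u}\mf{F}(\lambda_0,0)[\varphi]+\partial^{2}_{uu}\mf{F}(\lambda_0,0)[\varphi,\varphi]+2\mf{L}_0[\Gamma'(0)]=0$. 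Pairing with $\varphi^{\ast}$ eliminates the last term since $\mf{L}_0[\Gamma'(0)]\in R[\mf{L}_0]=\{v\in V:\langle v,\varphi^{\ast}\rangle=0\}$, and solving gives the claimed expression for $\dot\Lambda(0)$.

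For $\ddot\Lambda(0)$, note that the hypothesis $\partial^{2}_{uu}\mf{F}(\lambda_0,0)[\varphi,\varphi]\in R[\mf{L}_0]$ forces $\dot\Lambda(0)=0$ by the first part; moreover from $H''(0)=0$ one has $\mf{L}_0[2\Gamma'(0)]=-\partial^{2}_{uu}\mf{F}(\lambda_0,0)[\varphi,\varphi]$, so $2\Gamma'(0)-\phi\in N[\mf{L}_0]=\spann\{\varphi\}$. I would then grind out $H'''(s)$ by the chain rule. The many terms collapse dramatically at $s=0$: every term containing a pure $\lambda$-derivative of $\mf{F}$ vanishes, every term with a factor $\Lambda'(0)$ vanishes, and $\mf{L}_0[\dddot u(0)]$ lies in $R[\mf{L}_0]$. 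What survives is
\begin{equation*}
\partial^{3}_{uuu}\mf{F}[\varphi,\varphi,\varphi]+3\ddot\Lambda(0)\,\partial^{2}_{\lambda u}\mf{F}[\varphi]+6\,\partial^{2}_{uu}\mf{F}[\varphi,\Gamma'(0)]+\mf{L}_0[3\Gamma''(0)]=0.
\end{equation*}
Pairing with $\varphi^{\ast}$ kills the last summand; using $2\Gamma'(0)=\phi+c\varphi$ and the fact that $\langle\partial^{2}_{uu}\mf{F}[\varphi,\varphi],\varphi^{\ast}\rangle=0$ (the hypothesis again), the $c\varphi$ piece drops out and $6\langle\partial^{2}_{uu}\mf{F}[\varphi,\Gamma'(0)],\varphi^{\ast}\rangle=3\langle\partial^{2}_{uu}\mf{F}[\varphi,\phi],\varphi^{\ast}\rangle$, yielding the stated formula for $\ddot\Lambda(0)$.

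The only real obstacle is the combinatorial bookkeeping in differentiating $H(s)$ three times, since $h_{uu}$ is a bilinear form, $h_{uuu}$ is trilinear, and many mixed terms appear that one must collect carefully. The payoff is that the symmetry of these multilinear forms, together with the two simplifications (pure-$\lambda$ derivatives vanish; $R[\mf{L}_0]=\ker\varphi^{\ast}$), collapses all but a handful of terms after pairing with $\varphi^{\ast}$.
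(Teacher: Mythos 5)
Your argument is the standard implicit-differentiation derivation of the bifurcation formulas, and your algebra is correct: the two identities you arrive at for $\dot\Lambda(0)$ and $\ddot\Lambda(0)$ match the statement exactly. The paper itself offers no proof of this theorem (it simply cites Kielh\"ofer \cite{Ki}), so there is no competing argument in the text to compare against; your route is the one all the references take.

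There is, however, a regularity wrinkle you should at least acknowledge. Theorem~\ref{Cr-Rb} only guarantees $\Lambda,\Gamma\in\mathcal C^{r-1}$. For the first formula you expand $H''(s)$ via the chain rule, which formally asks for $\Lambda,u\in\mathcal C^{2}$; but under the minimal hypothesis $r\geq 2$ one only has $\mathcal C^{1}$. For the second formula you ``grind out $H'''(s)$'', which formally requires $\Lambda,u\in\mathcal C^{3}$ — in particular your expression contains $\Lambda'''(0)$ and $\dddot u(0)=3\Gamma''(0)$ — whereas $r\geq 3$ only yields $\mathcal C^{2}$ for $\Lambda,\Gamma$. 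The fact that the offending term $h_\lambda\Lambda'''$ is eventually multiplied by $h_\lambda(\lambda_0,0)=0$ does not license the chain-rule expansion, since the expansion presupposes the derivative exists. The clean fix is to stop at $H''(s)\equiv 0$ (valid for $\Lambda,u\in\mathcal C^{2}$), pair with $\varphi^{\ast}$, subtract the value at $s=0$, divide by $s$, and pass to the limit $s\to 0$; each of the five summands of $H''$ then contributes exactly the limit you would have obtained formally, using only first derivatives of $\mathcal C^{1}$ quantities and the identities $h_\lambda(\lambda_0,0)=0$, $\langle\mf{L}_0[\cdot],\varphi^{\ast}\rangle=0$. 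Alternatively, one can differentiate the scalar Lyapunov--Schmidt bifurcation function (which is genuinely $\mathcal C^{r}$) rather than the curve $(\Lambda,u)$. Either patch is routine; the rest of your computation — the vanishing of pure $\lambda$-derivatives, $\dot u(0)=\varphi$, $\ddot u(0)=2\Gamma'(0)$, the relation $2\Gamma'(0)-\phi\in N[\mf{L}_0]=\spann\{\varphi\}$, and the elimination of the $c\varphi$ contribution via the hypothesis $\langle\partial^2_{uu}\mf{F}[\varphi,\varphi],\varphi^{\ast}\rangle=0$ — is all correct and exactly what is needed.
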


\subsection{Global bifurcation theory}

In this subsection we estate the global alternative theorem for the bifurcation of nonlinear Fredholm operators of index zero given by López-Gómez and Mora-Corral in \cite{LG,LGMC,LGMC1} and sharpened in \cite{LGSM} in the light of the Fitzpatrick, Pejsachowicz and Rabier topological degree \cite{FPRa, FPRb, PR}, a generalization of the Leray--Schauder degree to Fredholm operators of index zero. Throughout this section, we consider a function of class $\mc{C}^{1}$, $\mathfrak{F}:\mathbb{R}\times U\to V$, such that
\begin{enumerate}
	\item[(F1)] $\mf{F}$ is orientable in the sense of Fitzpatrick, Pejsachowicz and Rabier, see \cite{FPRa}.
	\item[(F2)] $\mathfrak{F}(\lambda,0)=0$ for all $\lambda\in\mathbb{R}$.
	\item[(F3)] $\partial_{u}\mathfrak{F}(\lambda,u)\in\Phi_{0}(U,V)$ for every $\lambda\in\mathbb{R}$ and $u\in U$.
	\item[(F4)] $\mathfrak{F}$ is proper on bounded and closed subsets of $\mathbb{R}\times U$.
	\item[(F5)] $\Sigma(\mf{L})$ is a discrete subset of $\R$.
\end{enumerate}
The \textit{trivial branch} is the subset 
$$\mc{T}:=\{(\lambda,0): \l\in \R\}\subset \R\times U.$$
The \textit{set of non-trivial solutions} is defined by
\begin{equation*}
	\mf{S}=\left[ \mf{F}^{-1}(0)\backslash \mc{T}\right]\cup \{(\l,0):\;\l\in \Sigma(\mf{L})\}.
\end{equation*}
The global alternative theorem reads as follows:
\begin{theorem}[\textbf{Global alternative}]
	\label{TGB} Let $\mf{F}\in\mc{C}^{1}(\R\times U, V)$ be a map satisfying {\rm(F1)--(F5)}  and $\mf{C}$ be a connected component of the set of non-trivial solutions $\mf{S}$ such that $(\l_{0},0)\in\mf{C}$. If the linearization $\mf{L}(\l)$, $\l\in \R$, is analytic and the oddity condition
	$$\chi[\mf{L},\l_0]\in 2\N-1,$$
	holds for some $\l_0\in\Sigma(\mf{L})$, then one of the following non-excluding alternatives occur:
	\begin{enumerate}
		\item $\mf{C}$ is unbounded.
		\item There exists $\l_{1}\in \Sigma(\mf{L})$, $\l_{1}\neq\l_{0}$, such that $(\l_{1},0)\in \mf{C}$.
	\end{enumerate}
\end{theorem}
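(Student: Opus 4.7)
The plan is to adapt the classical Rabinowitz global alternative argument to the Fredholm setting, using the Fitzpatrick--Pejsachowicz--Rabier (FPR) degree in place of the Leray--Schauder degree. I would argue by contradiction, assuming both that $\mf{C}$ is bounded and that $\mf{C}\cap\big(\Sigma(\mf{L})\setminus\{\l_0\}\big)\times\{0\}=\emptyset$. The discreteness of $\Sigma(\mf{L})$ provided by (F5) allows one to pick $\e>0$ small with $[\l_0-\e,\l_0+\e]\cap\Sigma(\mf{L})=\{\l_0\}$, and the properness assumption (F4) together with the closedness of $\mf{S}$ yields that the connected component $\mf{C}$ is compact.

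Next, I would invoke a Whyburn--type topological separation lemma (as in \cite{LGMC,LGMC1}) to produce a bounded open neighborhood $\Omega\subset \R\times U$ of $\mf{C}$ whose boundary is disjoint from $\mf{S}$ and whose intersection with the trivial branch $\mc{T}$ is contained in the slab $(\l_0-\e,\l_0+\e)\times\{0\}$. For each $\l\in\R$, let $\Omega_{\l}:=\{u\in U:(\l,u)\in\Omega\}$. Conditions (F1)--(F3) then ensure that the FPR degree
\begin{equation*}
\mathscr{D}(\l):=\deg_{\mathrm{FPR}}\big(\mf{F}(\l,\cdot),\Omega_{\l},0\big)
\end{equation*}
is well defined whenever $\mf{F}(\l,\cdot)$ has no zero on $\partial\Omega_{\l}$; by construction this holds for every $\l\in\R$. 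Homotopy invariance of the FPR degree then gives that $\l\mapsto\mathscr{D}(\l)$ is constant on every connected interval of such parameters. The boundedness of $\mf{C}$ yields $\Omega_{\l}=\emptyset$, hence $\mathscr{D}(\l)=0$, for all sufficiently large $|\l|$.

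On the other hand, for $\l\in[\l_0-\e,\l_0+\e]\setminus\{\l_0\}$ the only solution of $\mf{F}(\l,u)=0$ lying in $\Omega_{\l}$ is $u=0$, so by excision $\mathscr{D}(\l)$ coincides with the local FPR index $i(\mf{F}(\l,\cdot),0)$. Here the key ingredient is the parity formula: since $\mf{L}(\l)$ is analytic and $\chi[\mf{L},\l_0]$ is odd, the FPR parity across $\l_0$ equals $(-1)^{\chi[\mf{L},\l_0]}=-1$, which after reducing to a transversal family $\mf{L}(\l)\circ\Phi(\l)$ via the Esquinas--L\'opez--G\'omez transversalization (see \eqref{ii.6}) amounts to
\begin{equation*}
i\big(\mf{F}(\l_0+\e,\cdot),0\big)\;=\;-\,i\big(\mf{F}(\l_0-\e,\cdot),0\big)\;\neq\;0.
\end{equation*}
Combined with $\mathscr{D}(\l)=0$ for large $|\l|$, this forces the appearance of a zero of $\mf{F}(\l,\cdot)$ on $\partial\Omega_{\l}$ for some intermediate value of $\l$, contradicting the construction of $\Omega$.

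The main obstacle is the rigorous construction of the isolating neighborhood $\Omega$ with all three properties (bounded, avoiding other trivial solutions, boundary free of nontrivial solutions), because $\mf{C}$ is a connected component of a set that is neither locally compact nor finite-dimensional; this is where one really needs that $\mf{F}$ be proper on closed bounded sets, so that $\mf{C}$ is compact and the Whyburn lemma applies. The second delicate point is identifying the parity jump of the FPR degree with the algebraic multiplicity $\chi[\mf{L},\l_0]$; this identification is precisely the bridge built in \cite{LGSM} between the orientability framework of Fitzpatrick--Pejsachowicz--Rabier and the multiplicity theory of Esquinas--L\'opez--G\'omez, and it is the place where the oddness hypothesis $\chi[\mf{L},\l_0]\in 2\N-1$ is used in an essential way.
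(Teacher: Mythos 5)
First, a point of reference: the paper does not prove Theorem \ref{TGB} at all. It is stated in Appendix B as a quoted background result, attributed to \cite{LG,LGMC,LGMC1} and, in the form used here, to \cite{LGSM}. So there is no internal proof to compare against; your proposal has to be measured against the standard argument in those references. Your overall architecture matches that argument: contradiction hypothesis, compactness of $\mf{C}$ from properness, Whyburn separation to build an isolating neighborhood $\Omega$, the FPR degree $\mathscr{D}(\l)=\deg_{\mathrm{FPR}}(\mf{F}(\l,\cdot),\Omega_{\l},0)$, vanishing of $\mathscr{D}$ for large $|\l|$, and the identification of the parity jump across $\l_0$ with $(-1)^{\chi[\mf{L},\l_0]}$ via \cite{LGSM} (which is indeed exactly where orientability and the oddness of $\chi$ enter). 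Those parts are right.

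There is, however, one step that is genuinely false as written: the claim that for $\l\in[\l_0-\e,\l_0+\e]\setminus\{\l_0\}$ the only zero of $\mf{F}(\l,\cdot)$ in $\Omega_{\l}$ is $u=0$, so that $\mathscr{D}(\l)$ equals the local index $i(\mf{F}(\l,\cdot),0)$ by excision. By construction $\Omega$ contains all of $\mf{C}$, and $(\l_0,0)$ is a bifurcation point of $\mf{C}$, so every neighborhood of $(\l_0,0)$ — in particular $\Omega_{\l}$ for $\l$ arbitrarily close to $\l_0$ — contains \emph{non-trivial} zeros of $\mf{F}(\l,\cdot)$ unless $\mf{C}=\{(\l_0,0)\}$. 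Excision therefore does not reduce $\mathscr{D}(\l_0\pm\e)$ to the local index, and the asserted equality $\mathscr{D}(\l_0\pm\e)=i_{\pm}\neq 0$ is unjustified. The standard repair (this is the heart of Rabinowitz's original argument and of its Fredholm version) is to use additivity rather than excision: write $\mathscr{D}(\l)=\mathscr{D}^{*}(\l)+i(\mf{F}(\l,\cdot),0)$ for $\l$ between $\l_0$ and the parameter values $\l_{\pm}$ at which the trivial branch exits $\Omega$, where $\mathscr{D}^{*}(\l)$ is the degree over $\Omega_{\l}$ with a small ball about the origin removed. One then shows (i) $\mathscr{D}$ is constant across $\l_0$ because no zeros ever reach $\partial\Omega_{\l}$, and (ii) $\mathscr{D}^{*}$ continues across $\l_{\pm}$ to the degree over $\Omega_{\l}$ for $\l$ outside $[\l_-,\l_+]$, which vanishes by the boundedness of $\mf{C}$. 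Together these give $i_{+}=i_{-}$, which contradicts the parity relation $i_{+}=-i_{-}\neq 0$ furnished by the odd multiplicity. Without this additivity step your chain of equalities breaks precisely at the point where the non-trivial solutions you are trying to detect enter the degree count.
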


\end{document}